\def\newspan{\operatorname{span}}
\def\range{\operatorname{range}}
\def\supp{\operatorname{supp}}
\def\ker{\operatorname{ker}}
\def\id{\operatorname{id}}
\def\sce{\operatorname{sce}}
\def\fin{\operatorname{fin}}
\def\rg{\operatorname{rg}}
\def\sg{\operatorname{sg}}
\def\clsp{\overline{\operatorname{span}}}
\def\Orb{\operatorname{Orb}}
\def\Per{\operatorname{Per}}
\def\BV{\operatorname{BV}}
\def\N{\mathbb{N}}
\def\Z{\mathbb{Z}}
\def\T{\mathbb{T}}
\def\TT{\mathcal{T}}
\def\LL{\mathcal{L}}
\def\OO{\mathcal{O}}
\def\KK{\mathcal{K}}
\def\HH{\mathcal{H}}
\def\XX{\mathcal{X}}
\def\MM{\mathcal{M}}
\def\FF{\mathcal{F}}
\newtheorem{thm}{Theorem}[section]
\newtheorem{cor}[thm]{Corollary}
\newtheorem{lemma}[thm]{Lemma}
\newtheorem{prop}[thm]{Proposition}
\theoremstyle{definition}
\theoremstyle{remark}
\newtheorem{remark}[thm]{Remark}
\newtheorem{example}[thm]{Example}
\numberwithin{equation}{section}
\begin{document}
\title[Exel's crossed product]{\boldmath{Exel's crossed product for non-unital $C^*$-algebras}}
\author{Nathan Brownlowe}

\author{Iain Raeburn}

\author{Sean T. Vittadello}

\address{Nathan Brownlowe, Iain Raeburn, Sean T. Vittadello, School of Mathematical and Applied Statistics\\
University of Wollongong\\
NSW 2522\\
Australia}
\email{nathanb@uow.edu.au, raeburn@uow.edu.au, seanv@uow.edu.au}

\begin{abstract}
We consider a family of dynamical systems $(A,\alpha,L)$ in which $\alpha$ is an endomorphism of a $C^*$-algebra $A$ and $L$ is a transfer operator for $\alpha$. We extend Exel's construction of a crossed product to cover non-unital algebras $A$, and show that the $C^*$-algebra of a locally finite graph can be realised as one of these crossed products. When $A$ is commutative, we find criteria for the simplicity of the crossed product, and analyse the  ideal structure of the crossed product. 
\end{abstract}
\thanks{This research has been supported by the Australian Research Council}
\maketitle

\section{Introduction}\label{Introduction}

Crossed products of $C^*$-algebras by endomorphisms were first used to describe the relationship between the Cuntz algebras $\OO_n$ and their UHF cores \cite{c, pas}; the original constructions were spatial, and Stacey later described an appropriate universal construction \cite{s}. Various generalisations to semigroups of endomorphisms have been proposed \cite{mold, n, m}, and these crossed products have been used to study Toeplitz algebras and Hecke algebras \cite{alnr,lr2,lr}. The endomorphisms in these applications have all been non-unital \emph{corner endomorphisms}, which shift the algebra onto a full corner of itself. 

In \cite{e1}, Exel observed that these notions of crossed product do not work well for the endomorphisms coming from classical dynamical systems in which the dynamics is irreversible, and proposed an alternative construction. The crucial extra ingredient in Exel's construction is a \emph{transfer operator}: a positive linear map which is, loosely speaking, a left inverse for the endomorphism.  One of his main motivations was to find a version of the crossed-product construction which realised the Cuntz-Krieger algebras as crossed products by a single endomorphism. His answer to this problem is quite different from Cuntz's description of $\OO_n$: Exel realises a Cuntz-Krieger algebra as a crossed product of the diagonal subalgebra, which is a maximal commutative subalgebra, and is much smaller than the UHF core in $\OO_n$. 

In most of these examples and applications, the underlying $C^*$-algebras have identities, even though many of the endomorphisms are not unital. For example, in Exel's description of the Cuntz-Krieger algebras, the underlying algebra is the (unital) algebra of continuous functions on a compact space of infinite words. Recently there have been many interesting generalisations of Cuntz-Krieger algebras, such as the graph algebras discussed in \cite{r}, where the infinite-path space is locally compact rather than compact.  Our goal here is to extend Exel's construction to cover endomorphisms of non-unital algebras, with a view to realising some substantial family of graph algebras as Exel crossed products.

Our extension of Exel's construction follows the original as closely as possible: there are technical issues involving nondegeneracy of representations and homomorphisms, but otherwise things go quite smoothly. Our main technical tools are a realisation of the crossed product as a relative Cuntz-Pimsner algebra, generalising the one for unital algebras found and used by the first two authors in \cite{br}, and a closely related realisation as a topological-graph algebra, which allows us to apply the deep results of Katsura on simplicity and ideal structure \cite{k,k2}. We succeed in realising the $C^*$-algebras of locally finite graphs without sources as Exel crossed products, and we analyse the ideal structure of Exel crossed products arising from (non-compact) irreversible dynamical systems. The limitations of our method (for example, as to what kinds of graphs we can handle) are in many ways as interesting as the results we have obtained, and at the end we make some speculative comments on what we have learned from our  investigations.

\smallskip

We begin in Section~\ref{Exelsys} by describing the \emph{Exel systems} which we study. Each system consists of an endomorphism $\alpha$ of a $C^*$-algebra $A$ and a transfer operator $L:A\to A$. For technical reasons, we have chosen to assume that the endomorphisms and transfer operators have strictly continuous extensions to the multiplier algebra; similar \emph{extendibility} hypotheses have appeared in the work of Adji \cite{a} and Larsen \cite{l}. These properties are enjoyed by the endomorphisms $\alpha:f\mapsto f\circ\tau$ of $C_0(T)$ associated to proper local homeomorphisms $\tau:T\to T$; we refer to such a pair $(T,\tau)$ as a \emph{classical system}. In our motivating example, $\tau$ is the shift on the infinite path space of a locally finite graph.

In Sections~\ref{The Toeplitz crossed product} and~\ref{The crossed product}, we describe the crossed products of Exel systems $(A,\alpha,L)$. As in \cite{e1}, there are two algebras of interest: the \emph{Toeplitz crossed product} $\TT(A,\alpha,L)$, and the \emph{crossed product} $A{\rtimes}_{\alpha,L}\N$, which is a quotient of $\TT(A,\alpha,L)$. Following \cite{br}, we identify $\TT(A,\alpha,L)$ as the Toeplitz algebra of a particular Hilbert bimodule $M_L$ built from $(A,\alpha,L)$ (Proposition~\ref{the Toeplitz algebra}), and  $A{\rtimes}_{\alpha,L}\N$ as a relative Cuntz-Pimsner algebra $\OO(K_\alpha,M_L)$ (Theorem~\ref{our redundancies}). For Exel systems $(C_0(T),\alpha,L)$ arising from classical systems, the ideal $K_\alpha$ is all of $A$, and $C_0(T) {\rtimes}_{\alpha,L}\N$ is the Cuntz-Pimsner algebra $\OO(M_L)$. 

In Section~\ref{realisegralg}, we achieve one of our goals by proving that the $C^*$-algebra of a locally finite graph with no sources can be realised as the Exel crossed product of the classical system involving the shift of the (locally compact) space of infinite paths. (Exel and Royer \cite{er} have described a different extension of the theory in \cite{e1} which covers the Exel-Laca algebras using a (unital) algebra of functions on a compact space.)

In Section~\ref{The system (C_0(X),alpha,L)}, we give criteria for the simplicity of crossed products associated to classical systems.  Our main tool is the work of Katsura \cite{k,k2}, which applies because we can realise the Cuntz-Pimsner algebra $\OO(M_L)=C_0(T)\rtimes_{\alpha,L}\N$ as the $C^*$-algebra of a topological graph. We then check that these criteria are compatible with the known criteria for graph algebras. In Sections~\ref{secgauge} and~\ref{secprim} we use the same technique to determine the gauge-invariant ideals and primitive ideals of crossed products of the form $C_0(T)\rtimes_{\alpha,L}\N$. In all these sections, it takes some effort to recast the results in the language of dynamics so we can compare them with those for compact $T$ in \cite{ev}, and more effort to convert them to the usual graph-theoretic descriptions of the ideal structure of graph algebras in \cite{bprs, bhrs, hs, r}, for example.  Reassuringly, though, everything does match up in the end.

\subsection{Background and notation} 

Let $A$ be a $C^*$-algebra. A {\em Hilbert $A$-bimodule} (or {\em correspondence over} $A$) is a right Hilbert $A$-module $M$ together with a left action of $A$ on $M$ which is implemented by a homomorphism $\phi$ of $A$ into the $C^*$-algebra $\LL(M)$ of adjointable operators on $M$: $a\cdot x:=\phi(a)(x)$. A {\em Toeplitz representation} $(\psi,\pi)$ of $M$ in a $C^*$-algebra $B$ consists of a linear map $\psi:M\to B$ and a homomorphism $\pi:A\to B$ such that
\[
\psi(x\cdot a)=\psi(x)\pi(a),\quad\psi(x)^*\psi(y)=\pi({\langle x,y\rangle}_A),\quad\text{and}\quad\psi(a\cdot x)=\pi(a)\psi(x).
\]  
The {\em Toeplitz algebra} of $M$ is the $C^*$-algebra $\TT(M)$ generated by a universal Toeplitz representation $(i_M,i_A)$ (see \cite[Proposition~1.3]{fr}). 

For $x,y\in M$ the operator $\Theta_{x,y}:M\to M$ defined by $\Theta_{x,y}(z):= x\cdot{\langle y,z\rangle}_A$ is adjointable with $\Theta_{x,y}^*=\Theta_{y,x}$. The span $\KK(M):=\overline{\newspan}\{\Theta_{x,y}:x,y\in M\}$ is a closed two-sided ideal in $\LL(M)$ called the {\em algebra of compact operators on} $M$. Thus $J(M):=\phi^{-1}(\KK(M))$ is a closed two-sided ideal in $A$. For every Toeplitz representation $(\psi,\pi)$ of $M$ in $B$ there is a homomorphism ${(\psi,\pi)}^{(1)}:\KK(M)\to B$ satisfying
\[
{(\psi,\pi)}^{(1)}(\Theta_{x,y})=\psi(x)\psi(y)^*\quad\text{for $x,y\in M$.}
\]
If $K$ is an ideal with $K\subset J(M)$, a Toeplitz representation $(\psi,\pi)$ of $M$ is {\em coisometric on} $K$ if
\[
{(\psi,\pi)}^{(1)}(\phi(a))=\pi(a)\quad\text{for $a\in K$,}
\]
and the {\em relative Cuntz-Pimsner algebra} $\OO(K,M)$ is the $C^*$-algebra generated by a universal Toeplitz representation $(k_M,k_A)$ which is coisometric on $K$ (see \cite{ms, fmr}). It is the quotient of $\TT(M)$ by the ideal generated by
\[
\{{(i_M,i_A)}^{(1)}(\phi(a))-i_A(a)):a\in K\},
\]
and if $q:\TT(M)\to\OO(K,M)$ is the quotient map, then $(k_M,k_A):=(q\circ i_M,q\circ i_A)$. We have $\OO(\{0\},M)=\TT(M)$, and $\OO(J(M),M)$ is Pimsner's version of the Cuntz-Pimsner algebra \cite{p,fmr}. With ${(\ker\phi)}^{\perp}=\{a\in A:ab=0\text{ for all $b\in\ker\phi$}\}$, we recover Katsura's version of the Cuntz-Pimsner algebra as $\OO(J(M)\cap {(\ker\phi)}^{\perp},M)$ \cite{kat}.  In our bimodules the homomorphism $\phi$ is always injective, and Pimsner's and Katsura's Cuntz-Pimsner algebras are the same algebra $\OO(M)$.

\section{Exel systems}\label{Exelsys}

Suppose $A$ is a $C^*$-algebra and $\alpha$ is an endomorphism of $A$. We assume throughout that $\alpha$ is \emph{extendible}: there is a strictly continuous endomorphism $\overline{\alpha}$ of $M(A)$ such that $\overline{\alpha}|_A=\alpha$. This is equivalent to assuming that there is an approximate identity ${(u_{\lambda})}_{\lambda\in\Lambda}$ for $A$ and a projection $p_{\alpha}\in M(A)$ such that $\alpha(u_{\lambda})\longrightarrow p_{\alpha}$ strictly in $M(A)$. In this paper, a \emph{transfer operator} $L$ for $(A,\alpha)$ is a bounded positive linear map $L:A\rightarrow A$ which extends to a bounded positive linear map $\overline{L}:M(A)\rightarrow M(A)$ such that $L(\alpha(a)m)=a\overline{L}(m)$ for $a\in A$ and $m\in M(A)$. We call the triple $(A,\alpha,L)$ an \emph{Exel system}.

\begin{remark}\label{props of to's}
Since positive linear maps are adjoint-preserving, we also have $L(m\alpha(a))=
\overline{L}(m)a$. Such transfer operators $L$ are automatically strictly continuous.
\end{remark}

\subsection{Exel systems arising from classical systems}\label{subsectcomm}
In the main examples of interest to us (and in \cite{e1}, \cite{e2} and \cite{ev}), the $C^*$-algebra $A$ is commutative. A \emph{classical system} consists of a locally compact space $T$ and a local homeomorphism $\tau:T\to T$ which is proper in the sense that inverse images of compact sets are compact. Properness implies that $\alpha:f\mapsto f\circ\tau$ maps $C_0(T)$ into $C_0(T)$, and the endomorphism $\alpha$ is nondegenerate, hence extendible with $\overline{\alpha}(1)=1$. As in \cite{e1} and \cite{e2}, the transfer operator $L$ is defined by averaging over the inverse images of points. It is not immediately obvious that this process maps $C_0(T)$ to itself:

\begin{lemma}\label{Lfcts}
Suppose that $\tau:T\to T$ is a proper local homeomorphism. Then the function $\delta:T\to \N$ defined by $\delta(t)=|\tau^{-1}(t)|$ is locally constant, and for every $f\in C_0(T)$ the function $L(f)$ defined by
\begin{equation}\label{defL}
L(f)(t)=\frac{1}{|\tau^{-1}(t)|}\sum_{\tau(s)=t}f(s)
\end{equation}
belongs to $C_0(T)$.
\end{lemma}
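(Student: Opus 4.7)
The plan is to prove the two assertions in the order stated. First I would establish that each fibre $\tau^{-1}(t)$ is finite: because $\tau$ is a local homeomorphism, each point of $\tau^{-1}(t)$ has a neighbourhood on which $\tau$ is injective, so $\tau^{-1}(t)$ is discrete as a subspace of $T$; because $\tau$ is proper and $\{t\}$ is compact, $\tau^{-1}(t)$ is compact; a compact discrete set is finite. Write $\tau^{-1}(t)=\{s_1,\dots,s_n\}$.

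Next, to show $\delta$ is locally constant at $t$, I would choose pairwise disjoint open neighbourhoods $U_i\ni s_i$ on which $\tau|_{U_i}$ is a homeomorphism onto an open set $\tau(U_i)$, and set $V=\bigcap_{i=1}^{n}\tau(U_i)$. For each $t'\in V$ each $U_i$ contains exactly one preimage of $t'$, so $\delta(t')\geq n$. The subtle direction is the upper bound: I must rule out preimages of nearby points lying outside $\bigcup_{i=1}^{n}U_i$. Here is where properness is indispensable. I would argue by contradiction: if no neighbourhood of $t$ worked, one could extract a sequence $t_k\to t$ and choose preimages $s_k\notin\bigcup_{i=1}^{n}U_i$. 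Since $\{t\}\cup\{t_k:k\in\N\}$ is compact, $\tau^{-1}(\{t\}\cup\{t_k:k\in\N\})$ is compact, so $(s_k)$ has a convergent subsequence whose limit $s$ satisfies $\tau(s)=t$. Then $s\in\tau^{-1}(t)\subset\bigcup_{i=1}^{n}U_i$, but also $s$ lies in the closed set $T\setminus\bigcup_{i=1}^{n}U_i$, a contradiction. Hence a smaller neighbourhood $V'\subset V$ of $t$ exists on which $\delta\equiv n$.

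With $\delta$ locally constant, the formula \eqref{defL} makes sense and I would verify continuity of $L(f)$ by expressing it locally via the continuous inverses $\sigma_i:V'\to U_i$ of $\tau|_{U_i}$: for $t'\in V'$,
\[
L(f)(t')=\frac{1}{n}\sum_{i=1}^{n}f(\sigma_i(t')),
\]
which is plainly continuous. For vanishing at infinity, given $\varepsilon>0$ I would pick a compact $K\subset T$ with $|f(s)|<\varepsilon$ for $s\notin K$; since $\tau$ is continuous, $\tau(K)$ is compact, and any $t\notin\tau(K)$ has all its preimages outside $K$, so the averaging formula gives $|L(f)(t)|<\varepsilon$. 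Thus $L(f)\in C_0(T)$.

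I expect the main obstacle to be the upper bound $\delta(t')\leq n$ for $t'$ near $t$. The lower bound follows immediately from the local-homeomorphism structure, and the $C_0$ estimate is a straightforward consequence of continuity of $\tau$, but without properness nothing prevents new preimages from drifting in from infinity and inflating $\delta$; properness is exactly the hypothesis that defeats this scenario via the sequential compactness argument above.
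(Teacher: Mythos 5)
Your overall strategy matches the paper's: finiteness of each fibre from properness plus discreteness, disjoint evenly-covered neighbourhoods $U_i$ giving both the lower bound on $\delta$ and the local formula $L(f)=\frac1n\sum_i f\circ\sigma_i$, and the compact image $\tau(K)$ for the vanishing-at-infinity estimate. The one step where you diverge is the upper bound $\delta(t')\le n$ for $t'$ near $t$, and there your argument has a technical flaw: $T$ is only assumed locally compact Hausdorff, not first countable, so the negation of ``some neighbourhood of $t$ works'' produces a net converging to $t$, not a sequence; and even if you had a sequence, the compact set $\tau^{-1}(\{t\}\cup\{t_k:k\in\N\})$ only guarantees a convergent \emph{subnet} of $(s_k)$, not a convergent subsequence. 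Your contradiction does go through verbatim with nets (confine the $t$'s to a fixed compact neighbourhood $N$ of $t$ so that all the offending preimages lie in the compact set $\tau^{-1}(N)$, extract a convergent subnet, and get a limit $s$ with $\tau(s)=t$ lying in the closed set $T\setminus\bigcup_iU_i$), so the gap is repairable rather than fatal.

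The paper sidesteps the issue with a direct argument you may prefer: fix a compact neighbourhood $N$ of $t$ and take the $U_i$ inside $\tau^{-1}(N)$; then $K:=\tau^{-1}(N)\setminus\bigl(\bigcup_iU_i\bigr)$ is compact, $\tau(K)$ is compact hence closed and does not contain $t$, so one can choose a neighbourhood $V$ of $t$ missing $\tau(K)$ and set $W:=\bigcap_i\bigl(V\cap\tau(U_i)\bigr)$. Every preimage of a point of $W$ lies in $\tau^{-1}(N)$, avoids $K$, and hence lies in exactly one $U_i$, giving $\delta\equiv n$ on $W$ with no appeal to sequences or nets. Substitute that for your contradiction step; the rest of your write-up (fibre finiteness, continuity via the local inverses, and the $C_0$ estimate) is correct and coincides with the paper's.
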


\begin{proof}
We fix $t\in T$ and a compact neighbourhood $N$ of $t$. The inverse image $\tau^{-1}(t)$ is a compact set, and it cannot have a cluster point because $\tau$ is a local homeomorphism, so it must be finite. We list it as $\tau^{-1}(t)=\{s_i:1\leq i\leq m\}$. Next choose disjoint open sets $U_i\subset \tau^{-1}(N)$ such that $s_i\in U_i$ and $s|_{U_i}$ is a local homeomorphism onto an open neighbourhood of $t$. The set $K:=\tau^{-1}(N)\setminus\big(\bigcup_{i}U_i\big)$ is compact, and $t$ does not belong to $\tau(K)$, so there is a neighbourhood $V$ of $t$ which misses $\tau(K)$. Then $W:=\bigcap_i(V\cap\tau(U_i))$ is an open neighbourhood of $t$, and every point of $W$ has exactly $m$ preimages, one in each $U_i$. So $\delta$ is constant on $W$, and
$L(f)|_W=\frac{1}{m}\sum_{i=1}^mf\circ(s|_{U_i})^{-1}|_{W}$ is continuous at $t$.

Finally, note that if $|f|<\epsilon$ outside a compact set $K$, then $|L(f)|<\epsilon$ outside the compact set $\tau(K)$. 
\end{proof}

Calculations show that the map $L:C_0(T)\to C_0(T)$ defined in Lemma~\ref{Lfcts} is positive, norm-decreasing and satisfies $L(\alpha(f)g)=fL(g)$. Equation \eqref{defL} also defines a map $\overline{L}$ on $C_b(T)=M(C_0(T))$ with the required properties, and hence $L$ is a transfer operator for $\alpha$. Thus $(C_0(T),\alpha,L)$ is an Exel system.

\begin{remark}\label{normalising factor}
The normalising factor of $|\tau^{-1}(t)|^{-1}$ in \eqref{defL} is not required for the key identity $L(\alpha(f)g)=fL(g)$ ---  we could multiply $L$ by any bounded continuous function without changing this equation. Indeed, in \cite{ev} no normalising factor is used. However, there the space $T$ is compact, so the  function $t\mapsto |\tau^{-1}(t)|$ is bounded, and the unnormalised transfer operator is still a bounded linear map on $C(T)$. When $T$ is locally compact, $t\mapsto |\tau^{-1}(t)|$ need not be bounded, and then we have to include the normalising factor to ensure that \eqref{defL} defines a bounded operator on $C_0(T)$.
\end{remark}

\subsection{Systems arising from directed graphs}\label{intrographex}

We assume throughout this paper that $E=(E^0,E^1,r,s)$ is a locally finite directed graph with no sources, and in \S\ref{Conclusions} we discuss the changes that would need to be made to accommodate more general graphs. We think of elements of $E^0$ as vertices, elements of $E^1$ as edges, and $r,s:E^1\to E^0$ as determining the range and source of edges. Saying that $E$ has no sources means that $r^{-1}(v)$ is nonempty for every vertex $v\in E^0$. Local-finiteness means that $E$ is both \emph{row-finite} ($r^{-1}(v)$ is finite for every $v$) and \emph{column-finite} ($s^{-1}(v)$ is finite for every $v$). 

We use the conventions of \cite{r} for graphs and their $C^*$-algebras. Thus $C^*(E)$ is the $C^*$-algebra generated by a universal Cuntz-Krieger $E$-family consisting of partial isometries $\{s_e:e\in E^1\}$ and mutually orthogonal projections $\{p_v:v\in E^0\}$ such that $s_e^*s_e=p_{s(e)}$ and $p_v=\sum_{r(e)=v}s_es_e^*$. We write $E^*$ for the set of finite paths $\mu=\mu_1\mu_2\cdots\mu_n$ satisfying $s(\mu_i)=r(\mu_{i+1})$ for all $i$, and $|\mu|$ for the length $n$ of such a path $\mu$.

The Exel system associated to $E$ arises from a classical system, as in  \S\ref{subsectcomm}. The underlying topological space $E^\infty$ is the set of infinite paths $\xi=\xi_1\xi_2\xi_3\cdots$, which is locally compact in the product topology from $\prod_{n=1}^\infty E^1$ because $E$ is row-finite; this topology has a basis consisting of the compact open sets $Z(\mu):=\{\xi\in E^\infty:\xi_i=\mu_i\text{ for $i\leq |\mu|$}\}$ for $\mu\in E^*$. The map $\sigma$ is the shift on $\sigma:E^\infty\to E^\infty$ defined by $\sigma(\xi_1\xi_2\xi_3\cdots)=\xi_2\xi_3\cdots$; $\sigma$ is a local homeomorphism because it is a homeomorphism of each $Z(e)$ onto $Z(s(e))$, and is proper because the graph is column-finite. 

As in \S\ref{subsectcomm}, the endomorphism $\alpha$ in our Exel system $(C_0(E^\infty),\alpha, L)$ is given by $\alpha:f\mapsto f\circ\sigma$ and the transfer operator $L$ is defined by averaging over the inverse images of points. Since $\sigma^{-1}(\xi)=\{e\xi:s(e)=r(\xi)\}$, we can write $L$ as
\[
L(f)(\xi)=\frac{1}{|s^{-1}(r(\xi))|}\sum_{s(e)=r(\xi)}f(e\xi).
\]
Even for locally finite graphs $E$ the valencies $|s^{-1}(v)|$ may be unbounded, so this is one situation where we need the normalising factor to make $L$ bounded (see Remark~\ref{normalising factor}).

\section{The Toeplitz crossed product}\label{The Toeplitz crossed product}

A \emph{Toeplitz-covariant representation} of an Exel system $(A,\alpha,L)$ in a $C^*$-algebra $B$ consists of a nondegenerate homomorphism $\pi:A\to B$
and an element $V\in M(B)$ such that 
\begin{itemize}\item[]\begin{itemize}
\item[(TC1)] $V\pi(a)=\pi(\alpha(a))V$, and 
\smallskip
\item[(TC2)] $V^*\pi(a)V=\pi(L(a))$.
\end{itemize} 
\end{itemize}
The \emph{Toeplitz crossed product} $\TT(A,\alpha,L)$ is the $C^*$-algebra  generated by a universal Toeplitz-covariant representation $(i,S)$.

Following \cite{e1} and \cite{br}, we next realise $\TT(A,\alpha,L)$ as the Toeplitz algebra of a Hilbert bimodule.  We make $A$ into a right $A$-module $A_L$ in which the right action of $a\in A$ on $m\in A_L$ is given by $m\cdot a=m\alpha(a)$, and define a pairing on $A_L$ by ${\langle m,n\rangle}_L=L(m^*n)$; $A_L$ is then a pre-inner-product module. The completion $M_L$ is a Hilbert $A$-module. We denote the quotient map by $q:A_L\to M_L$. The action of $A$ by left multiplication extends to an action by bounded adjointable operators on $M_L$, giving a homomorphism $\phi:A\to\LL(M_L)$, and $M_L$ becomes a right-Hilbert bimodule. Further details are in \cite[\S2]{br}.  An approximate-identity argument shows that $M_L$ is essential as a left $A$-module: $A\cdot M_L=\{a\cdot m:a\in A,m\in M_L\}$ is dense in $M_L$. ($M_L$ is also essential as a right $A$-module, because every Hilbert module is \cite[Corollary~2.7]{rw}.)

\begin{prop}\label{the Toeplitz algebra}
Suppose  $(A,\alpha,L)$ is an Exel system. There is a linear map ${\psi}_S:M_L\rightarrow\TT(A,\alpha,L)$ such that ${\psi}_S(q(a))=i(a)S$, and $({\psi}_S,i)$ is a Toeplitz representation of $M_L$ in $\TT(A,\alpha,L)$ such that $\psi_S\times i$ is an isomorphism of $\TT(M_L)$ onto $\TT(A,\alpha,L)$.
\end{prop}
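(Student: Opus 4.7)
To build $\psi_S$ and verify the first three assertions I would just compute on the dense set $q(A)\subseteq M_L$: defining $\psi_S(q(a)) := i(a)S$, (TC2) forces
\[
\|i(a)S\|^2 = \|S^*i(a^*a)S\| = \|i(L(a^*a))\| \leq \|L(a^*a)\| = \|q(a)\|_L^2,
\]
which simultaneously establishes well-definedness and extends $\psi_S$ to a contraction on all of $M_L$. The three Toeplitz identities are then routine: $\psi_S(q(a)\cdot b) = i(a\alpha(b))S = i(a)S\,i(b) = \psi_S(q(a))i(b)$ uses (TC1); $\psi_S(q(a))^*\psi_S(q(b)) = S^*i(a^*b)S = i(L(a^*b)) = i(\langle q(a),q(b)\rangle_L)$ uses (TC2); and $\psi_S(b\cdot q(a)) = i(b)\psi_S(q(a))$ is immediate. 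Universality of $\TT(M_L)$ then supplies a homomorphism $\Phi := \psi_S\times i \colon \TT(M_L)\to\TT(A,\alpha,L)$.

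For the reverse direction I would construct a Toeplitz-covariant representation $(i_A, T)$ of $(A,\alpha,L)$ inside $\TT(M_L)$. Nondegeneracy of $i_A$ on $\TT(M_L)$ is standard (one absorbs $i_A(a^*)$ on the left of each adjoint generator by Cohen factoring the corresponding $y\in M_L$), so the hard step is producing the multiplier $T\in M(\TT(M_L))$. In the unital case one would simply take $T = i_{M_L}(q(1))$; in our setting I would take $T$ to be the strict limit of $\bigl(i_{M_L}(q(u_\lambda))\bigr)$ along an approximate identity $(u_\lambda)$ of $A$. The net is bounded since $\|q(u_\lambda)\|^2 = \|L(u_\lambda^2)\|\leq\|L\|$, and the two convergences
\[
i_A(a)\,i_{M_L}(q(u_\lambda)) = i_{M_L}(q(au_\lambda)) \longrightarrow i_{M_L}(q(a)),
\]
\[
i_{M_L}(q(u_\lambda))\,i_A(a) = i_{M_L}(q(u_\lambda\alpha(a))) \longrightarrow i_{M_L}(q(\alpha(a)))
\]
follow from $au_\lambda\to a$ in norm and, using extendibility of $\alpha$, $u_\lambda\alpha(a)\to\alpha(a)$ in norm. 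Combined with nondegeneracy of $i_A$ these promote the net to a strict Cauchy net, defining $T\in M(\TT(M_L))$.

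The two limit relations $i_A(a)T = i_{M_L}(q(a))$ and $Ti_A(a) = i_{M_L}(q(\alpha(a))) = i_A(\alpha(a))T$ deliver (TC1), and for (TC2) I would compute
\[
T^* i_A(a) T = T^* i_{M_L}(q(a)) = \lim_\lambda i_{M_L}(q(u_\lambda))^* i_{M_L}(q(a)) = \lim_\lambda i_A(L(u_\lambda a)) = i_A(L(a)),
\]
using the inner-product Toeplitz axiom and boundedness of $L$. Universality of $\TT(A,\alpha,L)$ then supplies a homomorphism $\Psi\colon\TT(A,\alpha,L)\to\TT(M_L)$ with $\Psi\circ i = i_A$ and $\overline{\Psi}(S) = T$; evaluating $\Phi\circ\Psi$ and $\Psi\circ\Phi$ on the generating sets $\{i(a),S\}$ and $\{i_{M_L}(q(a)), i_A(a)\}$ shows them to be mutually inverse. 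The main obstacle throughout is the strict-topology construction of $T$, which is exactly where the extendibility hypotheses on $\alpha$ and $L$ earn their keep.
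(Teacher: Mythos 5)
Your argument is correct, but the second half takes a genuinely different route from the paper. For the direction $\TT(M_L)\to\TT(A,\alpha,L)$ you do exactly what the paper does (its Lemma~\ref{defpsiV}): the bound $\|i(a)S\|^2\le\|L(a^*a)\|$ gives $\psi_S$, and the three Toeplitz identities follow from (TC1)--(TC2). For the converse, the paper does \emph{not} build an inverse homomorphism; it verifies the universal property of $\TT(M_L)$ for $(\TT(A,\alpha,L),\psi_S,i)$ directly, by representing an arbitrary Toeplitz representation $(\psi,\pi)$ faithfully on Hilbert space, cutting down to the essential subspace of $\pi$ (Lemma~\ref{essreducing}), and constructing the required isometry-like operator spatially as $U_{\mu,\tau}\big(\sum\tau(a_i)k_i\big)=\sum\mu(q(\alpha(a_i)))k_i$ (Lemma~\ref{corres. between T-c repns and T repns p1}, which is then reused in the proof of Theorem~\ref{our redundancies}). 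You instead build the multiplier $T=\lim_\lambda i_{M_L}(q(u_\lambda))\in M(\TT(M_L))$ and produce an explicit inverse $\Psi$; this is cleaner in that it avoids spatial arguments altogether and makes the nondegeneracy of $i_A$ an input rather than a corollary. Two points in your version need to be made watertight. First, nondegeneracy of $i_A$ must be proved \emph{before} the proposition (the paper only deduces it afterwards, in Corollary~\ref{i is injective}, from the isomorphism itself, so quoting it would be circular); your Cohen-factorisation sketch does work, but note that for the adjoint generators one must factor through the \emph{right} action, $y=y'\cdot b$ giving $i_{M_L}(y)^*=i_A(b^*)i_{M_L}(y')^*$, which is legitimate since every Hilbert module is essential as a right module. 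Second, the two displayed convergences only control $i_A(a)T_\lambda$ and $T_\lambda i_A(a)$; to get strict Cauchyness against all of $\TT(M_L)$ you need $\clsp i_A(A)\TT(M_L)=\TT(M_L)=\clsp\TT(M_L)i_A(A)$ together with boundedness of the net --- this is exactly what nondegeneracy of $i_A$ supplies, so your parenthetical ``combined with nondegeneracy of $i_A$'' is carrying real weight and deserves a sentence of its own. With those two points spelled out, the verification of (TC1)--(TC2) for $(i_A,T)$ and the check that $\Phi$ and $\Psi$ are mutually inverse on generators go through as you describe.
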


This proposition seems to be substantially trickier when $A$ does not have an identity. As in the unital case, there is an issue with nondegeneracy: in a Toeplitz representation $(\psi,\pi)$, the representation $\pi$ does not have to be nondegenerate. But even if we assume nondegeneracy, it is not so easy to move from Toeplitz representations $(\psi,\pi)$ to Toeplitz-covariant representations $(\pi,V)$: in the unital case, we just take $V=\psi(1)$, and we go back by taking $\psi_V(q(a))=\pi(a)V$ (see \cite[Lemma~3.2]{br}). Here we construct $V$ from $(\psi,\pi)$ using a spatial argument.

\begin{lemma}\label{corres. between T-c repns and T repns p1} 
Suppose $(\mu,\tau)$ is a Toeplitz representation of $M_L$ on a Hilbert space $\HH$, and $\tau$ is nondegenerate. Then there is a bounded linear operator $U_{\mu,\tau}$ on $\HH$ such that 
\begin{equation}\label{defU}
U_{\mu,\tau}\Big(\sum_{i=1}^m\tau(a_i)k_i\Big)=\sum_{i=1}^m\mu(q(\alpha(a_i)))k_i\quad\text{for $a_i\in A$ and $k_i\in\HH$,} 
\end{equation}
and the pair $(\tau,U_{\mu,\tau})$ is a Toeplitz-covariant representation on $\HH$. 
\end{lemma}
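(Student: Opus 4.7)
The plan is to define $U_{\mu,\tau}$ on the dense subspace
\[
D := \newspan\{\tau(a)k : a\in A,\, k\in\HH\}
\]
(dense because $\tau$ is nondegenerate) by the formula \eqref{defU}, show that this prescription extends to a bounded operator on $\HH$, and then verify (TC1) and (TC2) on $D$.

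For well-definedness and boundedness, let $v=\sum_{i=1}^m \tau(a_i)k_i\in D$. The Toeplitz relation $\mu(x)^*\mu(y)=\tau({\langle x,y\rangle}_L)$ combined with $\langle m,n\rangle_L=L(m^*n)$ gives
\[
\Big\|\sum_i\mu(q(\alpha(a_i)))k_i\Big\|^2 = \sum_{i,j}\big\langle \tau(L(\alpha(a_i^*)\alpha(a_j)))k_j,\, k_i\big\rangle.
\]
Applying the identity $L(m\alpha(a_j))=\overline{L}(m)a_j$ of Remark~\ref{props of to's} with $m=\alpha(a_i^*)$ gives $L(\alpha(a_i^*)\alpha(a_j))=\overline{L}(\alpha(a_i^*))a_j$, and the defining identity $L(\alpha(a)m)=a\overline{L}(m)$ with $m=1$ yields $\overline{L}(\alpha(a_i^*))=a_i^*\overline{L}(1)$; together,
\[
L(\alpha(a_i^*)\alpha(a_j))=a_i^*\overline{L}(1)a_j\in A.
\]
Nondegeneracy of $\tau$ supplies a strictly continuous extension $\overline{\tau}:M(A)\to\BB(\HH)$, and the double sum collapses to
\[
\big\langle \overline{\tau}(\overline{L}(1))v,\, v\big\rangle \leq \|\overline{L}(1)\|\,\|v\|^2,
\]
since $\overline{L}(1)\geq 0$. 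Hence $U_{\mu,\tau}$ is well-defined on $D$ and extends by continuity to a bounded operator on $\HH$.

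For (TC1), using $q(\alpha(ab))=\alpha(a)\cdot q(\alpha(b))$ and the left-module property $\mu(c\cdot x)=\tau(c)\mu(x)$ of a Toeplitz representation,
\[
U_{\mu,\tau}\tau(a)\tau(b)k = \mu(q(\alpha(ab)))k = \tau(\alpha(a))\mu(q(\alpha(b)))k = \tau(\alpha(a))U_{\mu,\tau}\tau(b)k
\]
for $a,b\in A$ and $k\in\HH$, so density of $D$ gives $U_{\mu,\tau}\tau(a)=\tau(\alpha(a))U_{\mu,\tau}$. For (TC2), test on $v=\tau(b)k$ and $w=\tau(c)h$: rewriting $\tau(a)\mu(q(\alpha(b)))=\mu(q(a\alpha(b)))$ and using $\mu(x)^*\mu(y)=\tau({\langle x,y\rangle}_L)$ yields
\[
\langle U_{\mu,\tau}^*\tau(a)U_{\mu,\tau}v,\, w\rangle = \langle \tau(L(\alpha(c^*)a\alpha(b)))k,\, h\rangle,
\]
while the transfer-operator identities give $L(\alpha(c^*)a\alpha(b))=c^*L(a)b$, so the right-hand side equals $\langle\tau(L(a))v,w\rangle$ as required.

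The main obstacle is the boundedness estimate: it hinges on combining both transfer-operator identities to rewrite $L(\alpha(a_i^*)\alpha(a_j))$ as $a_i^*\overline{L}(1)a_j$, which reduces the norm computation to evaluating the single positive multiplier $\overline{L}(1)$ against $v$ and eliminates the cross-terms that would otherwise obstruct a clean bound.
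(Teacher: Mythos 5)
Your proposal is correct and follows essentially the same route as the paper: define $U_{\mu,\tau}$ on the dense span $\newspan\{\tau(a)k\}$, reduce the norm estimate to the identity $L(\alpha(a)^*\alpha(b))=a^*\overline{L}(1)b$ and the positivity of $\overline{\tau}(\overline{L}(1))$, extend by continuity using nondegeneracy, and verify (TC1) and (TC2) on vectors of the form $\tau(b)h$. The only difference is that you spell out the double-sum collapse $\sum_{i,j}\langle\tau(a_i^*\overline{L}(1)a_j)k_j,k_i\rangle=\langle\overline{\tau}(\overline{L}(1))v,v\rangle$ that the paper leaves as ``a calculation.''
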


\begin{proof}
Nondegeneracy ensures that $\tau$ extends to a representation $\overline{\tau}:M(A)\to B(\HH)$, and a calculation using the equation $L(\alpha(a)\alpha(b))=a\overline{L}(1)b$ shows that
\begin{equation}\label{eq for sums}
\Big\|\sum_{i=1}^m\mu(q(\alpha(a_i)))k_i\Big\|^2 \leq\big\|\overline{\tau}(\overline{L}(1))\big\|\;\Big\|\sum_{i=1}^m\tau(a_i)k_i\Big\|^2\leq\|\overline{L}(1)\|\;\Big\|\sum_{i=1}^m\tau(a_i)k_i\Big\|^2.
\end{equation}
If $\sum_{i=1}^m\tau(a_i)k_i=\sum_{i=1}^n\tau(b_i)l_i$, then \eqref{eq for sums} implies that
\[
\Big\|\sum_{i=1}^m\mu(q(\alpha(a_i)))k_i-\sum_{i=1}^n\mu(q(\alpha(b_i)))l_i\Big\|^2 \le \big\|\overline{L}(1)\big\|{\Big\|\sum_{i=1}^m\tau(a_i)k_i-\sum_{i=1}^n\tau(b_i)l_i\Big\|}^2= 0,
\]
and hence  there is a well-defined linear map $U_{\mu,\tau}$ on $\newspan\{\tau(a)h:a\in A,h\in \HH\}$ satisfying \eqref{defU}. Equation~\eqref{eq for sums} implies that $U_{\mu,\tau}$ is norm-decreasing, and hence extends to a bounded linear operator on $\clsp\{\tau(a)h:a\in A,h\in \HH\}$, which is all of $\HH$ by nondegeneracy of $\tau$.

To see that $(\tau,U_{\mu,\tau})$ is Toeplitz-covariant, we let $b\in A$. Then  
\begin{align*}
U_{\mu,\tau}\tau(a)(\tau(b)h) &= \mu(q(\alpha(ab)))h
= \tau(\alpha(a))\mu(q(\alpha(b)))h= \tau(\alpha(a))U_{\mu,\tau}(\tau(b)h),
\end{align*}
and the nondegeneracy of $\tau$ implies that $U_{\mu,\tau}\tau(a)=\tau(\alpha(a))U_{\mu,\tau}$. Next we calculate:
\begin{align*}
\big({U_{\mu,\tau}}^*\tau(a)U_{\mu,\tau}(\tau(b)h)\,|\,\tau(c)k\big) &= \big(\tau(a)U_{\mu,\tau}(\tau(b)h)\,|\,U_{\mu,\tau}(\tau(c)k)\big)\\
&= \big(\tau(a)\mu(q(\alpha(b)))h\,|\,\mu(q(\alpha(c)))k\big)\\
&=\big({\mu(q(\alpha(c)))}^*\mu(q(a\alpha(b)))h\,|\,k\big)\\
&= \big(\tau(L({\alpha(c)}^*a\alpha(b)))h\,|\,k\big)\\
&=\big({\tau(c)}^*\tau(L(a))(\tau(b)h)\,|\,k\big)\\
&= \big(\tau(L(a))(\tau(b)h)\,|\,\tau(c)k\big),
\end{align*}
which gives ${U_{\mu,\tau}}^*\tau(a)U_{\mu,\tau}=\tau(L(a))$.
\end{proof} 

\begin{lemma}\label{defpsiV}
If $(\pi,V)$ is a Toeplitz representation of $(A,\alpha, L)$ in a $C^*$-algebra $B$, then there is a Toeplitz representation $(\psi_V,\pi)$ of $M_L$ in $B$ such that $\psi_V(q(a))=\pi(a)V$.
\end{lemma}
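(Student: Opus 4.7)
The plan is to define $\psi_V$ first on the dense subspace $q(A_L)\subseteq M_L$ by the prescribed formula $\psi_V(q(a)):=\pi(a)V$, verify that it is well-defined and norm-decreasing with respect to the $M_L$-norm, extend by continuity to all of $M_L$, and finally check the three Toeplitz relations. Note that $\pi(a)V$ lies in $B$ because $\pi(a)\in B$ and $V\in M(B)$, so the candidate map has the correct target.

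The heart of the argument is the estimate, valid for every $a\in A$,
\[
\|\pi(a)V\|^2=\|V^*\pi(a^*a)V\|=\|\pi(L(a^*a))\|\leq\|L(a^*a)\|=\|{\langle q(a),q(a)\rangle}_A\|=\|q(a)\|_{M_L}^2,
\]
where the second equality uses (TC2) applied to $a^*a$. Applied to $a-b$, this immediately gives $\pi(a)V=\pi(b)V$ whenever $q(a)=q(b)$, so the assignment $q(a)\mapsto\pi(a)V$ descends to a well-defined linear map on $q(A_L)$; the same estimate shows it is norm-decreasing, hence extends uniquely to a bounded linear map $\psi_V:M_L\to B$. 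I expect this well-definedness and boundedness step, which rests entirely on (TC2), to be the main obstacle; everything else is bookkeeping against the module structure of $A_L$.

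For the Toeplitz axioms I would verify each identity on the dense subspace $q(A_L)$ and then invoke continuity. The right-module law $\psi_V(q(m)\cdot a)=\psi_V(q(m))\pi(a)$ reduces, via $q(m)\cdot a=q(m\alpha(a))$, to the identity $\pi(m\alpha(a))V=\pi(m)V\pi(a)$, which is immediate from (TC1). The left-module law $\psi_V(a\cdot q(m))=\pi(a)\psi_V(q(m))$ follows from $a\cdot q(m)=q(am)$ and multiplicativity of $\pi$. The inner-product identity is essentially the main calculation again: $\psi_V(q(m))^*\psi_V(q(n))=V^*\pi(m^*n)V=\pi(L(m^*n))=\pi({\langle q(m),q(n)\rangle}_A)$ by (TC2). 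Continuity extends all three relations from $q(A_L)$ to $M_L$, completing the construction.
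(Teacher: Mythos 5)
Your proposal is correct and follows essentially the same route as the paper: the key step in both is the estimate $\|\pi(a)V\|^2=\|\pi(L(a^*a))\|\le\|\langle q(a),q(a)\rangle_L\|$ coming from (TC2), which gives well-definedness and boundedness on the dense image of $A_L$, after which the three Toeplitz relations are verified by the same direct computations using (TC1), (TC2) and multiplicativity of $\pi$.
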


\begin{proof}
We define ${\theta}:A_L\to B$ by ${\theta}(a)=\pi(a)V$. Then $\theta$ is linear, and for $a\in A$ we have
\[
{\|{\theta}(a)\|}^2 = \|{(\pi(a)V)}^*i(a)V\|= \|\pi(L(a^*a))\|\le \|L(a^*a)\|= \|{\langle a,a\rangle}_L\|,
\]
so $\theta$ is bounded for the semi-norm on $A_L$ and extends to a bounded map ${\psi}_V:M_L\to B$. To see that $({\psi}_V,\pi)$ is a Toeplitz representation of $M_L$, we let $a,b,c\in A$ and compute:
\begin{gather*}
{\psi}_V(q(b)\cdot a)={\psi}_V(q(b\alpha(a)))=\pi(b\alpha(a))V=\pi(b)V\pi(a)={\psi}_V(q(b))\pi(a),\\
{{\psi}_V(q(b))}^*{\psi}_V(q(c))={(\pi(b)V)}^*\pi(c)V=V^*\pi(b^*c)V=\pi(L(b^*c))=\pi({\langle q(b),q(c)\rangle}_L),\ \mbox{ and }\\
{\psi}_V(a\cdot q(b))={\psi}_V(q(ab))=\pi(ab)V=V\pi(a)\pi(b)V=\pi(a){\psi}_V(q(b)).\qedhere
\end{gather*}
\end{proof}

\begin{lemma}\label{essreducing}
Suppose that $(\psi,\pi)$ is a Toeplitz representation of $M_L$ on a Hilbert space $\HH$. Then the essential subspace $\KK:=\overline{\newspan}\{\pi(a)h:a\in A,h\in\HH\}$ is reducing for $(\psi,\pi)$, and we have  $\pi{|}_{{\KK}^{\perp}}=0$ and $\psi{|}_{{\KK}^{\perp}}=0$.
\end{lemma}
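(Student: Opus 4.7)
The plan is to prove the two vanishing assertions first, and then deduce that $\KK$ is reducing essentially for free. The key observation is that $\KK^{\perp}$ is characterised as $\{h \in \HH : \pi(a)h = 0 \text{ for all } a \in A\}$, and the Toeplitz relation $\psi(x)^{*}\psi(y) = \pi({\langle x,y\rangle}_{L})$ will let us transfer this vanishing from $\pi$ to $\psi$.

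First I would show $\pi|_{\KK^{\perp}} = 0$ by a standard adjoint argument. If $h \in \KK^{\perp}$, then for every $a \in A$ and $k \in \HH$ we have $(\pi(a)h \,|\, k) = (h \,|\, \pi(a^{*})k) = 0$ since $\pi(a^{*})k \in \KK$. Thus $\pi(a)h = 0$ for all $a \in A$.

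Next I would show $\psi|_{\KK^{\perp}} = 0$. For $x \in M_{L}$ and $h \in \KK^{\perp}$, use the inner-product relation of a Toeplitz representation to compute
\[
\|\psi(x)h\|^{2} = \big(\psi(x)^{*}\psi(x)h \,\big|\, h\big) = \big(\pi({\langle x,x\rangle}_{L})h \,\big|\, h\big).
\]
Since ${\langle x,x\rangle}_{L} \in A$, the previous step forces $\pi({\langle x,x\rangle}_{L})h = 0$, and hence $\psi(x)h = 0$.

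Finally I would deduce that $\KK$ is reducing. Since both $\pi$ and $\psi$ vanish identically on $\KK^{\perp}$, the subspace $\KK^{\perp}$ is trivially invariant under every $\pi(a)$ and every $\psi(x)$; so $\KK = (\KK^{\perp})^{\perp}$ is invariant too, which is exactly the reducing condition. No step here looks delicate — the only thing to be careful about is invoking the Toeplitz relation $\psi(x)^{*}\psi(x) = \pi({\langle x,x\rangle}_{L})$ at the right moment so that the problem for $\psi$ collapses onto the already-established vanishing for $\pi$.
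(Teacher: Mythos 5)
Your first two steps are correct and match what the paper does on $\KK^{\perp}$: the adjoint argument gives $\pi|_{\KK^{\perp}}=0$, and the identity $\|\psi(x)h\|^{2}=\big(\pi({\langle x,x\rangle}_{L})h\,\big|\,h\big)$ then gives $\psi|_{\KK^{\perp}}=0$. The gap is in your final deduction. For a single operator $T$, invariance of $\KK^{\perp}$ under $T$ is equivalent to invariance of $\KK$ under $T^{*}$, not under $T$. The family $\{\pi(a):a\in A\}$ is closed under adjoints, so for $\pi$ your reasoning is harmless; but the family $\{\psi(x):x\in M_{L}\}$ is not, and from $\psi(x)\KK^{\perp}=\{0\}$ you obtain only $\psi(x)^{*}\KK\subset\KK$. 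You have therefore not shown $\psi(x)\KK\subset\KK$, which is half of the reducing condition. This is a genuine gap rather than a presentational one: for a bimodule with degenerate left action there are Toeplitz representations satisfying everything you proved for which $\KK$ is not $\psi$-invariant. For instance, take $A=M=\C$ with the zero left action, $\HH=\C^{2}$, $\pi(1)$ the projection onto $\C e_{1}$ and $\psi(1)$ the partial isometry $e_{1}\mapsto e_{2}$; then $\psi(1)^{*}\psi(1)=\pi(1)=\pi(\langle 1,1\rangle)$ and $\psi$ kills $\KK^{\perp}=\C e_{2}$, yet $\psi(1)$ maps $\KK$ into $\KK^{\perp}$.

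What makes the lemma true here is that $M_{L}$ is essential as a left $A$-module, a fact recorded just before Proposition~\ref{the Toeplitz algebra} via an approximate-identity argument. The paper uses exactly this: by the Cohen factorisation theorem every $m\in M_{L}$ factors as $m=a\cdot m'$ with $a\in A$ and $m'\in M_{L}$, whence for $k\in\KK$ one has $\psi(m)k=\psi(a\cdot m')k=\pi(a)\psi(m')k\in\overline{\pi(A)\HH}=\KK$. You need to add this step (or some equivalent appeal to the nondegeneracy of the left action) to get invariance of $\KK$ under $\psi$; the rest of your argument can stand as written.
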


\begin{proof}
It is standard that $\KK$ is reducing for $\pi$ and $\pi|_{\KK}=0$, so we need to show that
$\KK$ and ${\KK}^{\perp}$ are invariant under $\psi$. 
Let $m\in M_L$ and $k\in\KK$. Since $M_L$ is essential, the Cohen factorisation theorem (as in \cite[Proposition~2.33]{rw}, for example) allows us to factor $m=a\cdot m'$. Then $\psi(m)k=\psi(a\cdot m')k=\pi(a)\psi(m')k$ belongs to $\KK$, so $\KK$ is invariant under $\psi$. Next, for $m\in M_L$ and $h\in{\KK}^{\perp}$, we have
\[
{\|\psi(m)h\|}^2 = (\psi(m)h\,|\,\psi(m)h)
= ({\psi(m)}^*\psi(m)h\,|\,h)
= (\pi({\langle m,m\rangle}_L)h\,|\,h)
= 0,
\]
because $\pi({\langle m,m\rangle}_L)k'\in\KK$. Hence $\psi(m)k'=0$ for all $k'\in\KK^{\perp}$, which implies that $\KK^{\perp}$ is invariant under $\psi$ and that $\psi(m){|}_{{\KK}^{\perp}}=0$.
\end{proof}

\begin{proof}[Proof of Proposition~\ref{the Toeplitz algebra}]
By Lemma~\ref{defpsiV}, there is a Toeplitz representation $(\psi_S,i)$ of $M_L$ in $\TT(A,\alpha,L)$. We will use \cite[Proposition~1.3]{fr} to prove that $(\TT(A,\alpha,L),{\psi}_S,i)$ has the universal property which characterises $(\TT(M_L),i_{M_L},i_A)$. Since $\TT(A,\alpha,L)$ is generated by $i(A)\cup i(A)S$, it is generated by $i(A)\cup \psi_S(M_L)$. Next, let $(\psi,\pi)$ be a Toeplitz representation of $M_L$ in $B$, and aim to prove that there is a representation $\psi\times\pi$ of $\TT(A,\alpha,L)$ such that $(\psi\times\pi)\circ  i=\pi$ and $(\psi\times\pi)\circ  \psi_S=\psi$.

We choose a a faithful nondegenerate representation $\rho:B\rightarrow B(\HH)$, and consider the Toeplitz representation $({\psi}_0,{\pi}_0):=(\rho\circ\psi,\rho\circ\pi)$. Lemma~\ref{essreducing} implies that the restriction $({\psi}_0|_{\KK},{\pi}_0|_{\KK})$ to the essential subspace $\KK$ of $\pi_0$ is a Toeplitz representation of $M_L$ on $\KK$ with ${\pi}_0|_{\KK}$ nondegenerate, so Lemma~\ref{corres. between T-c repns and T repns p1} gives a Toeplitz-covariant representation $({\pi}_0|_{\KK},V)$ of $(A,\alpha,L)$ on $\KK$, and the universal property of $\TT(A,\alpha,L)$ gives a nondegenerate representation ${\pi}_0|_{\KK}\times V:\TT(A,\alpha,L)\rightarrow B(\KK)$ satisfying $({\pi}_0|_{\KK}\times V)\circ i={\pi}_0|_{\KK}$ and $\overline{({\pi}_0|_{\KK}\times V)}(S)=V$. The representation
\[
\mu:=({\pi}_0|_{\KK}\times V)\oplus 0:\TT(A,\alpha,L)\rightarrow B(\HH)
\]
then satisfies $\mu\circ i={\pi}_0=\rho\circ\pi$, and $\mu\circ{\psi}_S={\psi}_0=\rho\circ\psi$. Since $\TT(A,\alpha,L)$ is generated by $i(A)\cup{\psi}_S(M_L)$, and the range of $\rho$ is closed, we have $\range\mu\subset\range\rho$, and the homomorphism $\psi\times\pi:={\rho}^{-1}\circ\mu$ has the required properties. The result now follows from \cite[Proposition~1.3]{fr}.
\end{proof}

\begin{cor}\label{i is injective}
The map $i:A\rightarrow \TT(A,\alpha,L)$ is injective. The map $i_A:A\to \TT(M_L)$ is nondegenerate, and the canonical Toeplitz representation  $(i_{M_L},i_A)$ is universal for Toeplitz representations $(\psi,\pi)$ in which $\pi$ is nondegenerate.
\end{cor}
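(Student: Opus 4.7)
The plan is to transfer each claim across the isomorphism $\Phi := \psi_S\times i : \TT(M_L) \to \TT(A,\alpha,L)$ from Proposition~\ref{the Toeplitz algebra}, under which $i_A$ is identified with $i$ and $i_{M_L}$ with $\psi_S$. For the injectivity of $i$, I would appeal to the standard Fock-module fact (\cite[Proposition~1.3]{fr}) that $i_A:A\to\TT(M_L)$ is injective, whence $i = \Phi\circ i_A$ is injective too. A self-contained alternative, staying inside this paper's machinery, would be to start from a faithful nondegenerate representation of $A$ on a Hilbert space, build the Fock Toeplitz representation of $M_L$, convert it into a Toeplitz-covariant representation of $(A,\alpha,L)$ with faithful homomorphism component using Lemma~\ref{corres. between T-c repns and T repns p1}, and then invoke the universal property of $\TT(A,\alpha,L)$.

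For the nondegeneracy of $i_A$, the key observation is that a Toeplitz-covariant representation by definition has a nondegenerate homomorphism component; hence the universal $i:A\to\TT(A,\alpha,L)$ satisfies $\overline{i(A)\TT(A,\alpha,L)} = \TT(A,\alpha,L)$. Applying the $*$-isomorphism $\Phi^{-1}$ to this identity immediately gives $\overline{i_A(A)\TT(M_L)} = \TT(M_L)$, which is exactly the nondegeneracy of $i_A$.

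The final claim is then essentially free: $\TT(M_L)$ has the universal property for \emph{all} Toeplitz representations of $M_L$ by \cite[Proposition~1.3]{fr}, and \emph{a fortiori} for the subclass whose homomorphism component is nondegenerate. Combined with the fact that $(i_{M_L},i_A)$ itself lies in that subclass (by the preceding paragraph), this shows that $(i_{M_L},i_A)$ is universal among nondegenerate Toeplitz representations of $M_L$. The only genuinely substantive input in the corollary is the injectivity of $i_A$; everything else is formal bookkeeping across $\Phi$, so I do not anticipate a real obstacle beyond citing (or re-proving via the Fock construction) that standard fact.
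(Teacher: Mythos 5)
Your proposal is correct and follows essentially the same route as the paper: injectivity of $i_A$ from \cite[Proposition~1.3]{fr} transferred through the isomorphism $\psi_S\times i$, nondegeneracy of $i_A$ deduced from the built-in nondegeneracy of $i$ in the definition of Toeplitz-covariant representations, and the final universality claim obtained formally from the universal property of $(\TT(M_L),i_{M_L},i_A)$.
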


\begin{proof}
Proposition~1.3 of \cite{fr} implies that $i_A:A\rightarrow\TT(M_L)$ is injective, and so therefore is $i=(\psi_S\times i)^{-1}\circ i_A$.  On the other hand, $i$ is nondegenerate, and so is $i_A=(\psi_S\times i)\circ i$. The last statement now follows from the universal property of $(\TT(M_L),i_{M_L},i_A)$.
\end{proof}

\section{The crossed product}\label{The crossed product}

Suppose that $(A,\alpha,L)$ is an Exel system, $(i,S)$ is the canonical Toeplitz representation of $(A,\alpha,L)$ in $\TT(A,\alpha,L)$, and $(\psi_S,i)$ is the Toeplitz-covariant representation of Proposition~\ref{the Toeplitz algebra}. Following \cite{e1}, we say that a pair $(i(a),k)$ in $\TT(A,\alpha,L)$ is a \emph{redundancy} if $k\in \overline{i(A)SS^*i(A)}$ and
$i(a)i(b)S=ki(b)S$ for all $b\in A$. As in \cite[Lemma~3.5]{br}, $(i(a),k)$ is a redundancy if and only if $a\in \phi^{-1}(\KK(M_L))$ and $k=(\psi_S,i)^{(1)}(\phi(a))$. 

Following \cite{e1}, we define the crossed product $A\rtimes_{\alpha,L}\N$ to be the quotient of $\TT(A,\alpha,L)$ by the ideal $I(A,\alpha,L)$ generated by the elements
$i(a)-k$ such that $(i(a),k)$ is a redundancy and $a\in\overline{A\alpha(A)A}$. As in \cite[Corollary~3.6]{br}, we write $K_\alpha:=\overline{A\alpha(A)A}\cap\phi^{-1}(\KK(M_L))$, and then $I(A,\alpha,L)$ is the ideal generated by the elements $i(a)-(\psi_S,i)^{(1)}(\phi(a))$ for $a\in K_\alpha$. We write $Q$ for the quotient map of $\TT(A,\alpha,L)$ onto $A\rtimes_{\alpha,L}\N$.

As in \cite[Proposition~3.6]{br}, the crossed product $(A\rtimes_{\alpha, L}\N,Q\circ i,\overline{Q}(S))$ is universal for Toeplitz representations $(\pi,V)$ of $(A,\alpha,L)$ which are \emph{covariant} in the sense that
\[
\pi(a)={({\psi}_V,\pi)}^{(1)}(\phi(a))\ \text{  for all $a\in K_{\alpha}$.}
\]
Then, extending \cite[Proposition~3.10]{br}, we have:
 
\begin{thm}\label{our redundancies}
For every Exel system $(A,\alpha,L)$, there is an isomorphism $\theta$ of the relative Cuntz-Pimsner algebra $\OO(K_\alpha,M_L)$ onto $A\rtimes_{\alpha,L}\N$ such that $\theta\circ k_A=Q\circ i$ and $\theta\circ k_{M_L}=Q\circ \psi_S=\psi_{\overline{Q}(S)}$.
\end{thm}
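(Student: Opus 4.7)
The plan is to leverage Proposition~\ref{the Toeplitz algebra} and pass to quotients. Specifically, I will show that the isomorphism $\Phi := \psi_S \times i : \TT(M_L) \to \TT(A, \alpha, L)$ carries the defining ideal of $\OO(K_\alpha, M_L)$ onto the ideal $I(A, \alpha, L)$, and therefore descends to the required isomorphism $\theta$ of the quotients.

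The key functoriality observation is that, since $\Phi$ is a $*$-homomorphism with $\Phi \circ i_A = i$ and $\Phi \circ i_{M_L} = \psi_S$, the formula $(\psi, \pi)^{(1)}(\Theta_{x,y}) = \psi(x)\psi(y)^*$ gives
$$\Phi\bigl((i_{M_L}, i_A)^{(1)}(\Theta_{x,y})\bigr) = \psi_S(x)\psi_S(y)^* = (\psi_S, i)^{(1)}(\Theta_{x,y})$$
for all $x, y \in M_L$. By linearity and continuity, $\Phi \circ (i_{M_L}, i_A)^{(1)} = (\psi_S, i)^{(1)}$ on all of $\KK(M_L)$, and in particular on $\phi(K_\alpha)$. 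The defining ideal of $\OO(K_\alpha, M_L)$ is generated by $\{(i_{M_L}, i_A)^{(1)}(\phi(a)) - i_A(a) : a \in K_\alpha\}$, whereas (by the redundancy characterisation recalled just before the theorem) $I(A, \alpha, L)$ is generated by $\{(\psi_S, i)^{(1)}(\phi(a)) - i(a) : a \in K_\alpha\}$. The preceding identity shows that $\Phi$ maps the first generating set bijectively onto the second, so $\Phi$ takes the defining ideal of $\OO(K_\alpha, M_L)$ onto $I(A, \alpha, L)$ and induces an isomorphism $\theta : \OO(K_\alpha, M_L) \to A \rtimes_{\alpha, L} \N$ at the quotient level. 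Writing $q_1 : \TT(M_L) \to \OO(K_\alpha, M_L)$ for the Cuntz-Pimsner quotient, the identity $\theta \circ q_1 = Q \circ \Phi$ immediately yields $\theta \circ k_A = Q \circ i$ and $\theta \circ k_{M_L} = Q \circ \psi_S$.

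For the last identity $Q \circ \psi_S = \psi_{\overline{Q}(S)}$, note that $Q$ extends to the multiplier algebras because it is surjective, so $\overline{Q}(S) \in M(A \rtimes_{\alpha, L} \N)$ is defined and $(Q \circ i, \overline{Q}(S))$ is a Toeplitz-covariant representation of $(A,\alpha,L)$ in $A \rtimes_{\alpha, L} \N$. For $a \in A$,
$$(Q \circ \psi_S)(q(a)) = Q(i(a) S) = (Q \circ i)(a)\, \overline{Q}(S) = \psi_{\overline{Q}(S)}(q(a))$$
by the formula of Lemma~\ref{defpsiV}, and density of $q(A)$ in $M_L$ concludes the argument. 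The only step requiring a modicum of care is the identification $\Phi \circ (i_{M_L}, i_A)^{(1)} = (\psi_S, i)^{(1)}$ on all of $\phi(K_\alpha)$, rather than just on finite-rank operators, which is handled by continuity; otherwise this is a routine quotient passage, with the substantive work already absorbed into Proposition~\ref{the Toeplitz algebra}.
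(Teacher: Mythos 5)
Your argument is correct, but it takes a genuinely different route from the paper. You transport the defining ideal of $\OO(K_\alpha,M_L)$ through the isomorphism $\Phi=\psi_S\times i$ of Proposition~\ref{the Toeplitz algebra}, using the functoriality of $(\cdot,\cdot)^{(1)}$ on rank-one operators plus continuity to match the generating set $\{(i_{M_L},i_A)^{(1)}(\phi(a))-i_A(a):a\in K_\alpha\}$ with the generating set $\{i(a)-(\psi_S,i)^{(1)}(\phi(a)):a\in K_\alpha\}$ of $I(A,\alpha,L)$, and then pass to quotients. The paper instead verifies the universal property of $(A\rtimes_{\alpha,L}\N,\psi_{\overline{Q}(S)},Q\circ i)$ head-on: given a Toeplitz representation $(\psi,\pi)$ coisometric on $K_\alpha$, it represents faithfully on Hilbert space, cuts down to the essential subspace of $\pi$ via Lemma~\ref{essreducing}, produces a Toeplitz-covariant representation via Lemma~\ref{corres. between T-c repns and T repns p1}, checks that it is covariant, and invokes the universal property of the crossed product for covariant representations. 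Your approach is shorter and purely algebraic, with all the spatial work already absorbed into Proposition~\ref{the Toeplitz algebra}; its only external inputs are the two explicit descriptions of the ideals being quotiented by, both of which the paper records (the description of $I(A,\alpha,L)$ coming from the redundancy characterisation cited from \cite{br}, and the description of the relative Cuntz--Pimsner ideal from the background section). The paper's route costs more but exhibits the covariant-representation picture of $A\rtimes_{\alpha,L}\N$ explicitly, which is the form in which the crossed product is used elsewhere. Your handling of the final identity $Q\circ\psi_S=\psi_{\overline{Q}(S)}$ coincides with the paper's opening observation in its proof.
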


\begin{proof}
We begin by observing that $Q(\psi_S(q(a)))=Q(\pi(a))\overline{Q}(S)$, so $Q\circ\psi_S$ coincides with the representation $\psi_{\overline{Q}(S)}$ asssociated to $(Q\circ\pi, \overline{Q}(S))$. 
We prove that $(A{\rtimes}_{\alpha,L}\N,{\psi}_{\overline{Q}(S)},Q\circ i)$ has the universal property which characterises $(\OO(K_\alpha,M_L),k_{M_L},k_A)$.  Since $\pi(A)\cup\psi_S(M_L)$ generates the Toeplitz algebra, 
$Q\circ\pi(A)\cup{\psi}_{\overline{Q}(S)}(M_L)$ generates $A{\rtimes}_{\alpha,L}\N$.

Suppose that $(\psi,\pi)$ is a Toeplitz representation of $M_L$ in a $C^*$-algebra $B$ which is coisometric on $K_{\alpha}$. As in the proof of Proposition~\ref{the Toeplitz algebra}, we choose a faithful nondegenerate representation $\rho:B\rightarrow B(\HH)$, and consider the Toeplitz representation $({\psi}_0,{\pi}_0):=(\rho\circ\psi,\rho\circ\pi)$ of $M_L$ on $\HH$. The identity ${(\rho\circ\psi,\rho\circ\pi)}^{(1)}=\rho\circ{(\psi,\pi)}^{(1)}$ (see \cite[Section 1]{fmr}) implies that $({\psi}_0,{\pi}_0)$ is coisometric on $K_{\alpha}$. Now we restrict $(\psi_0,\pi_0)$ to the essential subspace $\KK$ for $\pi_0$, and, as in the proof of Proposition~\ref{the Toeplitz algebra}, we get a Toeplitz-covariant representation $(\pi_0|_{\KK},V)$. A straightforward calculation shows that $(\psi_0|_{\KK},\pi_0|_{\KK})^{(1)}(T)=(\psi_0,\pi_0)^{(1)}(T)|_{\KK}$ for $T=\Theta_{m,n}$, and this extends by linearity and continuity to $T\in \KK(M_L)$. Thus
\begin{align*}
{({\psi}_V,{\pi}_0|_{\KK})}^{(1)}(\phi(a))(k) &= {({\psi}_0|_{\KK},{\pi}_0|_{\KK})}^{(1)}(\phi(a))(k)
= {({\psi}_0,{\pi}_0)}^{(1)}|_{\KK}(\phi(a))(k)\\
&= {({\psi}_0,{\pi}_0)}^{(1)}(\phi(a))(k)
= {\pi}_0(a)(k)= {\pi}_0|_{\KK}(a)(k),
\end{align*}
so $({\pi}_0|_{\KK},V)$ is a covariant representation of $(A,\alpha,L)$, and gives a representation ${\pi}_0|_{\KK}\times V$ of $A\rtimes_{\alpha,L}\N$. Then $\nu:=\rho^{-1}\circ\big(({\pi}_0|_{\KK}\times V)\oplus 0\big)$ satisfies $\nu\circ(Q\circ i)=\pi$ and $\nu\circ\psi_{\overline{Q}(S)}=\psi$. 

The result now follows from \cite[Proposition~1.3]{fmr}.
\end{proof}

From now on, we use the isomorphism of Theorem~\ref{our redundancies} to identify $A\rtimes_{\alpha,L}\N$ with $\OO(K_\alpha,M_L)$, and we write $(k_{M_L},k_A)$ for the canonical Toeplitz representation of $M_L$ in $A\rtimes_{\alpha,L}\N=\OO(K_\alpha,M_L)$.

For systems $(C_0(T),\alpha,L)$ arising from classical systems $(T,\tau)$, $\phi:C_0(T)\to \LL(M_L)$ has range in $\KK(M_L)$. To see this, it suffices to prove that $\phi(f)\in \KK(M_L)$ for every $f\in C_c(T)$. Choose a finite cover $\{U_i\}$ of $\supp f$ by relatively compact open sets such that $\tau|{U_i}$ is one-to-one, and let $\{p_i\}$ be a partition of unity subordinate to $\{U_i\}$. Define $g_i=(|\tau^{-1}(\tau(t))|\rho_i(t))^{1/2}$. Then for $h\in C_0(T)$ we have
\[
(\Theta_{fg_i,g_i}h)(t)=f(t)g_i(t)\frac{1}{|\tau^{-1}(\tau(t))|}\sum_{\tau(s)=\tau(t)}g_i(s)h(s)=f(t)\rho_i(t)h(t),
\]
so 
\begin{equation}\label{LHScpct}
\phi(f)=\sum_{i}\Theta_{fg_i,g_i}
\end{equation}
belongs to $\KK(M_L)$. Since $\alpha$ is nondegenerate, $\overline{A\alpha(A)A}=A$, and $K_\alpha=A$. Thus:

\begin{cor}\label{cpclassical}
Suppose that $(C_0(T),\alpha,L)$ arises from a classical system $(T,\tau)$, as in \textnormal{\S\ref{subsectcomm}}. Then $(C_0(T)\rtimes_{\alpha,L}\N,k_{M_L},k_A))$ is the Cuntz-Pimsner algebra $(\OO(M_L),k_{M_L},k_A)$.
\end{cor}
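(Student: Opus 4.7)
The plan is to apply Theorem~\ref{our redundancies} and then show that the ideal $K_\alpha$ that appears there is all of $A=C_0(T)$, which forces $\OO(K_\alpha,M_L)=\OO(J(M_L),M_L)=\OO(M_L)$ (recall that since $\phi$ is injective in our setting, Pimsner's and Katsura's Cuntz-Pimsner algebras agree, and both are denoted $\OO(M_L)$).

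Since $K_\alpha=\overline{A\alpha(A)A}\cap \phi^{-1}(\KK(M_L))$, I would establish $K_\alpha=A$ in two independent steps. First, because $\alpha$ is nondegenerate for a classical system ($\overline{\alpha}(1)=1$), an approximate identity argument gives $\alpha(A)A$ dense in $A$, hence $\overline{A\alpha(A)A}=A$. Second, I would show $\phi(C_0(T))\subseteq \KK(M_L)$, which forces $\phi^{-1}(\KK(M_L))=C_0(T)$. Combining these two facts gives $K_\alpha=A$, and the corollary follows from Theorem~\ref{our redundancies}.

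The main work is the second step. Here I would follow the computation already sketched in the paragraph preceding the corollary: since $\KK(M_L)$ is closed and $C_c(T)$ is dense in $C_0(T)$, it suffices to show $\phi(f)\in \KK(M_L)$ for $f\in C_c(T)$. Using that $\tau$ is a local homeomorphism, I would cover $\supp f$ by finitely many relatively compact open sets $U_i$ on which $\tau$ is injective, take a partition of unity $\{p_i\}$ subordinate to $\{U_i\}$, and set $g_i(t)=\bigl(|\tau^{-1}(\tau(t))|\,p_i(t)\bigr)^{1/2}$. A direct computation using the defining formula of $L$ then yields
\[
(\Theta_{fg_i,g_i}h)(t)=f(t)p_i(t)h(t)\qquad\text{for }h\in C_0(T),
\]
so that $\phi(f)=\sum_i \Theta_{fg_i,g_i}\in \KK(M_L)$, as required.

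The main obstacle is simply verifying carefully that the pointwise formula above really holds: one must check that $g_i\in C_0(T)$ (using Lemma~\ref{Lfcts} to see that $t\mapsto|\tau^{-1}(\tau(t))|$ is locally constant, hence continuous on $\supp f$), and one must untangle the sum $\sum_{\tau(s)=\tau(t)}g_i(s)h(s)$ using that $p_i$ is supported in $U_i$, where $\tau$ is injective, so only the term $s=t$ contributes; this gives the factor $|\tau^{-1}(\tau(t))|\,p_i(t)$ that cancels against the normalising factor in $L$. Once this is done, everything else is routine.
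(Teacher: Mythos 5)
Your proposal is correct and follows essentially the same route as the paper's own argument (the paragraph immediately preceding the corollary): both establish $K_\alpha=C_0(T)$ by combining nondegeneracy of $\alpha$ (so $\overline{A\alpha(A)A}=A$) with the partition-of-unity computation $\phi(f)=\sum_i\Theta_{fg_i,g_i}$ showing $\phi(C_c(T))\subseteq\KK(M_L)$, and then invoke Theorem~\ref{our redundancies}. Your added remarks on checking $g_i\in C_c(T)$ via local constancy of $t\mapsto|\tau^{-1}(t)|$ and on why only the term $s=t$ survives in the sum are accurate details that the paper leaves implicit.
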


Next, we recall from \cite{br} that if $I$ is an ideal in $A$, the transfer operator $L$ is \emph{faithful on $I$} of $A$ if $a\in I\mbox{ and }L(a^*a)=0\Longrightarrow a=0$, and \emph{almost faithful on $I$} if
\[
a\in I\mbox{ and }L({(ab)}^*ab)=0 \text{ for all } b\in A\Longrightarrow a=0.
\]
The arguments of Theorem~4.2 and Corollary~4.3 of \cite{br} give the following results on the injectivity of $k_A:A\to A\rtimes_{\alpha,L}\N$. The examples in \cite[\S4]{br} show that they are sharp.

\begin{thm}\label{injectivity of j_A for non-comm}
Suppose $(A,\alpha,L)$ is an Exel system. Then $Q\circ i:A\to A{\rtimes}_{\alpha,L}\N$ is injective if and only if $L$ is almost faithful on $K_{\alpha}:=\overline{A\alpha(A)A}\cap J(M_L)$. 
\end{thm}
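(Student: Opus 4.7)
The plan is to use Theorem~\ref{our redundancies} to identify $Q\circ i:A\to A\rtimes_{\alpha,L}\N$ with the canonical map $k_A:A\to \OO(K_\alpha,M_L)$, and then to reduce the injectivity question to a norm computation in $M_L$ combined with a standard result about the canonical map into a relative Cuntz-Pimsner algebra.

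My first step is to characterise $\ker\phi$ in terms of $L$. The left action on $M_L$ satisfies $\phi(a)q(b)=q(ab)$, so the defining formula for the $M_L$-valued inner product gives
\[
\|\phi(a)q(b)\|^2=\langle q(ab),q(ab)\rangle_L=L((ab)^*(ab)).
\]
Since $q(A)$ is dense in $M_L$, it follows that $a\in\ker\phi$ if and only if $L((ab)^*(ab))=0$ for every $b\in A$. Consequently the condition ``$L$ is almost faithful on $K_\alpha$'' is exactly the ideal-theoretic condition $K_\alpha\cap\ker\phi=\{0\}$.

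With this translation in hand, the forward direction is immediate: if $k_A$ is injective and $a\in K_\alpha\cap\ker\phi$, then the coisometry relation on $K_\alpha$ yields
\[
k_A(a)=(k_{M_L},k_A)^{(1)}(\phi(a))=0,
\]
forcing $a=0$.

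For the converse, assume $K_\alpha\cap\ker\phi=\{0\}$. Here I would follow the strategy of \cite[Theorem~4.2]{br} and \cite[Corollary~4.3]{br}: build a Fock-space Toeplitz representation $(T,\pi)$ of $M_L$ on $\FF(M_L)=\bigoplus_{n\geq 0}M_L^{\otimes n}$ in which the $A$-part $\pi$ is nondegenerate and faithful, then quotient by the ideal imposing the coisometry relation on $K_\alpha$ to obtain a representation of $\OO(K_\alpha,M_L)$. The assumption $K_\alpha\cap\ker\phi=\{0\}$ is precisely what is needed to ensure that this quotient does not annihilate any nonzero element of $\pi(A)$, so $k_A$ remains injective after passing to $\OO(K_\alpha,M_L)$. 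The main obstacle is to verify that the Fock-space argument from \cite{br} carries over to the non-unital setting; the tools for this---essentiality of $M_L$ as a bimodule, the Cohen factorisation already invoked in Lemma~\ref{essreducing}, and the extendibility of $\alpha$ and $L$---have all been established in the preceding sections, so the adaptation should be routine.
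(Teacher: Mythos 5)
Your argument is correct and is essentially the proof the paper intends: the paper simply invokes the arguments of \cite[Theorem~4.2]{br}, which consist of exactly your two steps --- identifying almost faithfulness of $L$ on $K_\alpha$ with the condition $K_\alpha\cap\ker\phi=\{0\}$ via the computation $\|\phi(a)q(b)\|^2=\|L((ab)^*ab)\|$, and then applying \cite[Proposition~2.21]{fmr} (whose proof is the Fock-space construction you sketch, and which is stated for general, possibly non-unital, coefficient algebras, so the adaptation you worry about is not needed). The only quibble is notational: $\|\phi(a)q(b)\|^2$ is the \emph{norm} of the inner product $\langle q(ab),q(ab)\rangle_L=L((ab)^*ab)$, not the inner product itself, though this does not affect the equivalence you draw.
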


\begin{cor}\label{injectivity of j_A}
Suppose $(A,\alpha,L)$ is an Exel system with $A$ commutative. Then $Q\circ i:A\to A{\rtimes}_{\alpha,L}\N$ is injective if and only if $L$ is faithful on $K_{\alpha}$.
\end{cor}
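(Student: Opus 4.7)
The plan is to reduce to Theorem~\ref{injectivity of j_A for non-comm}, which characterises injectivity of $Q\circ i$ by almost faithfulness of $L$ on $K_\alpha$. It suffices to prove that when $A$ is commutative, almost faithfulness of $L$ on $K_\alpha$ is equivalent to faithfulness of $L$ on $K_\alpha$; the corollary then follows immediately.

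The easy direction --- faithful implies almost faithful --- does not use commutativity. Given $a\in K_\alpha$ with $L((ab)^*ab)=0$ for every $b\in A$, I would let $b$ range over an approximate identity $(u_\lambda)$ of $A$ and use norm-continuity of $L$ applied to $(au_\lambda)^*(au_\lambda)\to a^*a$ to conclude $L(a^*a)=0$; faithfulness then forces $a=0$.

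The main step, and the one that genuinely uses commutativity, is the reverse implication. Suppose $L$ is almost faithful on $K_\alpha$ and $a\in K_\alpha$ satisfies $L(a^*a)=0$. In a commutative $C^*$-algebra I can rewrite $(ab)^*ab = a^*a\,b^*b$, and commutativity also ensures that multiplication by the positive element $a^*a$ preserves the operator inequality $b^*b\le \|b\|^2 1_{M(A)}$. This yields
\[
0\le a^*a\,b^*b \le \|b\|^2\, a^*a \qquad\text{in } A,
\]
and applying the positive map $L$ gives $L((ab)^*ab)\le \|b\|^2 L(a^*a)=0$ for every $b\in A$. Almost faithfulness then forces $a=0$.

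The key point --- and the reason the argument does not extend to general $A$ --- is this domination step: in the noncommutative case one cannot bound $(ab)^*ab$ by a scalar multiple of $a^*a$ using only positivity, so there is no direct mechanism for upgrading a single vanishing $L(a^*a)=0$ to the ``for all $b$'' statement required by almost faithfulness. The examples in \cite[\S4]{br} confirm that the equivalence genuinely fails beyond the commutative setting.
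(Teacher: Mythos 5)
Your proposal is correct and follows essentially the same route as the paper, which derives the corollary from Theorem~\ref{injectivity of j_A for non-comm} via the argument of \cite[Corollary~4.3]{br}: in the commutative case the identity $(ab)^*ab=a^*a\,b^*b\le\|b\|^2a^*a$ together with positivity of $L$ shows that almost faithfulness on $K_\alpha$ coincides with faithfulness, while the approximate-identity limit handles the converse direction. Both your key steps (the domination estimate and the norm-continuity of $L$ applied along $(au_\lambda)$) are exactly what is needed, so there is nothing to add.
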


\begin{cor}\label{ansexelQ}
Suppose that $(C_0(T),\alpha,L)$ arises from a classical system $(T,\tau)$, as in \textnormal{\S\ref{subsectcomm}}. Then the canonical map $k_{A}$ of $C_0(T)$ into $C_0(T)\rtimes_{\alpha,L}\N=\OO(M_L)$ is injective.
\end{cor}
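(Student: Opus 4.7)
The plan is to reduce the statement to Corollary~\ref{injectivity of j_A} and then verify the hypothesis directly from the explicit formula for $L$. Since $A = C_0(T)$ is commutative, Corollary~\ref{injectivity of j_A} says that $k_A = Q\circ i$ is injective if and only if $L$ is faithful on $K_\alpha = \overline{A\alpha(A)A}\cap J(M_L)$. So the work splits into two independent steps: identify $K_\alpha$, then check faithfulness.

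First I would identify $K_\alpha$ with all of $C_0(T)$. For a classical system the endomorphism $\alpha : f \mapsto f\circ\tau$ is nondegenerate (noted already in \S\ref{subsectcomm}), so $\overline{A\alpha(A)A} = A$. For the second factor, the computation \eqref{LHScpct} in the proof of Corollary~\ref{cpclassical} already shows $\phi(f) \in \KK(M_L)$ for every $f \in C_c(T)$, hence $\phi(C_0(T)) \subset \KK(M_L)$, which gives $J(M_L) = \phi^{-1}(\KK(M_L)) = C_0(T)$. Combining these yields $K_\alpha = C_0(T)$.

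Second I would verify that $L$ is faithful on $C_0(T)$. Suppose $f \in C_0(T)$ and $L(f^*f) = 0$. For any $s \in T$, set $t = \tau(s)$; since $s \in \tau^{-1}(t)$, the formula
\[
0 = L(f^*f)(t) = \frac{1}{|\tau^{-1}(t)|}\sum_{\tau(r)=t}|f(r)|^2
\]
forces every summand, in particular $|f(s)|^2$, to vanish. Thus $f = 0$ on $T$, and $L$ is faithful.

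There is no real obstacle here: once Corollary~\ref{injectivity of j_A} is in hand, the proof is essentially bookkeeping, with the only mildly subtle point being the recognition that the compactness calculation already performed in Corollary~\ref{cpclassical} gives $J(M_L) = A$ for free, so that $K_\alpha$ collapses to all of $C_0(T)$.
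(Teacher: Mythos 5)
Your proposal is correct and follows essentially the same route as the paper: the paper's proof is precisely the faithfulness check via the explicit formula for $L$, with the reduction through Corollary~\ref{injectivity of j_A} and the identification $K_\alpha=C_0(T)$ (established just before Corollary~\ref{cpclassical}) left implicit. The only cosmetic difference is that identifying $K_\alpha$ is not strictly needed, since faithfulness of $L$ on all of $C_0(T)$ already implies faithfulness on the ideal $K_\alpha$.
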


\begin{proof}
We just need to observe that
\[
L(f^*f)=0\Longrightarrow \sum_{\tau(s)=t}|f(s)|^2=0\text{ for all $t$}\Longrightarrow |f(s)|^2=0\text{ for all $s$}\Longrightarrow f=0.\qedhere
\]
\end{proof}

\section{Graph algebras as Exel crossed products}\label{realisegralg}

Our next theorem says that many graph algebras can be viewed as Exel crossed products associated to the classical system $(E^\infty,\sigma)$. Recall that in this case $M_L$ is the completion of a copy $\{q(f):f\in C_c(E^\infty)\}$ of $C_c(E^\infty)$.

\begin{thm}\label{T:GAasECP}
Let $E$ be a locally finite directed graph with no sources, and define $c:E^0\to [0,\infty)$ by $c(v)=|s^{-1}(v)|$. Then the elements
\begin{equation}\label{forms for ck family}
S_{e} := \sqrt{c(s(e))}k_{M_L}(q(\chi_{Z ( e )}) )\ \text{ and }\ P_{v} := k_{A}( \chi_{Z ( v )} )
\end{equation}
form a Cuntz-Krieger $E$-family, and the homomorphism $\pi_{S,P}:C^*(E)\to C_0(E^\infty)\rtimes_{\alpha,L}\N$ is an isomorphism. For $\mu\in E^n$, we have
\begin{equation}\label{relateSPtopipsi}
k_{A}(\chi_{Z(\mu)})=S_\mu S_\mu^*\ \text{ and }\ k_{M_L}(q(\chi_{Z(\mu)}))=c(s(\mu_1))^{-1/2}S_{\mu}S_{\mu_2\cdots\mu_n}^*.
\end{equation}
\end{thm}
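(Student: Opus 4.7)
The strategy is to verify that $\{S_e, P_v\}$ is a Cuntz-Krieger $E$-family, derive \eqref{relateSPtopipsi} by a double induction on $|\mu|$, deduce surjectivity of the induced homomorphism $\pi_{S,P}$ from these formulas, and invoke the gauge-invariant uniqueness theorem for graph $C^*$-algebras for injectivity.

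The relations $P_vP_w = \delta_{v,w}P_v$ follow from disjointness of the cylinders $Z(v)$. The identity $S_e^*S_e = P_{s(e)}$ comes from the Toeplitz inner product relation $\psi(x)^*\psi(y) = \pi({\langle x,y\rangle}_A)$ together with the direct computation $L(\chi_{Z(e)}) = c(s(e))^{-1}\chi_{Z(s(e))}$ from \eqref{defL}. For the sum $P_v = \sum_{r(e)=v}S_eS_e^*$, note that $K_\alpha = A$ in the classical case (Corollary~\ref{cpclassical}), so the coisometric condition gives $k_A(f) = (k_{M_L},k_A)^{(1)}(\phi(f))$. Applying \eqref{LHScpct} to $f = \chi_{Z(v)}$ with the partition of unity $\{\chi_{Z(e)} : r(e) = v\}$ on the disjoint finite union $Z(v) = \bigsqcup_{r(e)=v}Z(e)$ (using $|\sigma^{-1}(\sigma(t))| = c(s(e))$ for $t \in Z(e)$) yields $\phi(\chi_{Z(v)}) = \sum_{r(e)=v}c(s(e))\Theta_{\chi_{Z(e)},\chi_{Z(e)}}$, and applying $(k_{M_L},k_A)^{(1)}$ gives the desired relation.

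To establish \eqref{relateSPtopipsi}, I induct on $n = |\mu|$, at each length first proving the $k_{M_L}$ formula and then the $k_A$ formula. For $n=1$ the $k_{M_L}$ formula is immediate from the definition of $S_e$, and the $k_A$ formula follows by applying \eqref{LHScpct} to $\chi_{Z(e)}$ with single open set $Z(e)$ and $\rho = \chi_{Z(e)}$, yielding $\phi(\chi_{Z(e)}) = c(s(e))\Theta_{\chi_{Z(e)},\chi_{Z(e)}}$ and hence $k_A(\chi_{Z(e)}) = S_eS_e^*$. For the inductive step with $n \geq 2$ and $\nu = \mu_2\cdots\mu_n$, the factorisation $q(\chi_{Z(\mu)}) = q(\chi_{Z(\mu_1)}) \cdot \chi_{Z(\nu)}$ combined with the bimodule identity $k_{M_L}(m\cdot a) = k_{M_L}(m)k_A(a)$ and the inductive hypothesis for $k_A$ on $\nu$ yields $k_{M_L}(q(\chi_{Z(\mu)})) = c(s(\mu_1))^{-1/2} S_\mu S_\nu^*$. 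Applying \eqref{LHScpct} to $\chi_{Z(\mu)}$ with single open set $U_1 = Z(\mu_1)$ and $\rho_1 = \chi_{Z(\mu_1)}$ then gives $\phi(\chi_{Z(\mu)}) = c(s(\mu_1))\Theta_{\chi_{Z(\mu)},\chi_{Z(\mu_1)}}$, and substituting the just-proven $k_{M_L}$ formula and the $n=1$ case produces $k_A(\chi_{Z(\mu)}) = S_\mu S_\mu^*$.

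For surjectivity, the cylinders $\{Z(\mu) : \mu \in E^*\}$ are compact open (by row-finiteness) and form a base separating points, so Stone-Weierstrass gives density of $\newspan\{\chi_{Z(\mu)}\}$ in $C_0(E^\infty)$, and the norm-decreasing map $q$ with dense range gives density of $\newspan\{q(\chi_{Z(\mu)})\}$ in $M_L$; formulas \eqref{relateSPtopipsi} then force $\pi_{S,P}(C^*(E))$ to contain $k_A(C_0(E^\infty)) \cup k_{M_L}(M_L)$, which generates the crossed product. For injectivity, the canonical gauge action on $\OO(K_\alpha,M_L)$ (fixing $k_A$ and multiplying $k_{M_L}$ by $z \in \T$) restricts to the standard gauge action on $\pi_{S,P}(C^*(E))$, while Corollary~\ref{ansexelQ} combined with $Z(v) \neq \emptyset$ (from the no-sources hypothesis) gives $P_v \neq 0$, so the gauge-invariant uniqueness theorem for graph algebras forces $\pi_{S,P}$ to be injective. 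The main technical obstacle is the explicit identification $\phi(\chi_{Z(\mu)}) = c(s(\mu_1))\Theta_{\chi_{Z(\mu)},\chi_{Z(\mu_1)}}$ via \eqref{LHScpct}: one must choose a partition of unity adapted to the path structure and verify that the normalising factor $|\sigma^{-1}(\sigma(t))|$ cancels cleanly against the $\sqrt{c(s(e))}$ in the definition of $S_e$.
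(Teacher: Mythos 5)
Your proposal is correct and follows essentially the same route as the paper: the same Cuntz--Krieger verification, the gauge-invariant uniqueness theorem via Corollary~\ref{ansexelQ} for injectivity, and an induction on $|\mu|$ driven by the rank-one identity $\phi(\chi_{Z(\mu)})=c(s(\mu_1))\Theta_{\chi_{Z(\mu)},\chi_{Z(\mu_1)}}$ (which the paper isolates as Lemma~\ref{L:leftaction} and proves by direct pointwise computation, whereas you derive it from \eqref{LHScpct} by choosing the single chart $Z(\mu_1)$ --- both are fine). The only organisational difference is that you establish \eqref{relateSPtopipsi} first and read off surjectivity from density of the cylinder functions, while the paper runs a separate induction for surjectivity and then proves \eqref{relateSPtopipsi}; your ordering is slightly more economical and loses nothing.
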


To make our calculations more legible we are going to drop the map $q:C_c(E^{\infty})\to M_L$ from our notation. We will use the next lemma several times.

\begin{lemma}\label{L:leftaction}
For $\mu\in E^*$ with $|\mu|\geq 1$ we have  
\[
\phi( \chi_{Z ( \mu )} ) = c(s(\mu_1))\Theta_{ \chi_{Z ( \mu )}, \chi_{Z ( \mu_{1} )} }=c(s(\mu_1))\Theta_{ \chi_{Z ( \mu_1)}, \chi_{Z ( \mu )} }.\]
\end{lemma}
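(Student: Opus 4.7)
The plan is to verify both identities pointwise, working on the dense subspace $q(C_c(E^\infty))\subset M_L$. Since all operators in sight are bounded and $\phi$, $\Theta_{x,y}$ are $A$-linear in the obvious sense, it is enough to show that $\phi(\chi_{Z(\mu)})q(z)$ and the two candidate expressions for it agree as elements of $A_L$ for every $z\in C_c(E^\infty)$. Reinstating $q$ momentarily, one has $\phi(a)q(z) = q(az)$ and
\[
\Theta_{q(x),q(y)}(q(z)) = q(x)\cdot\langle q(y),q(z)\rangle_L = q\bigl(x\,\alpha(L(y^*z))\bigr),
\]
so the task reduces to proving the two functional identities
\[
\chi_{Z(\mu)}\,z = c(s(\mu_1))\,\chi_{Z(\mu)}\,\alpha\bigl(L(\chi_{Z(\mu_1)}z)\bigr) = c(s(\mu_1))\,\chi_{Z(\mu_1)}\,\alpha\bigl(L(\chi_{Z(\mu)}z)\bigr)
\]
pointwise on $E^\infty$ (here we use that $A$ is commutative, so the $A$-valued inner product, left action, and right action all become ordinary pointwise products twisted by $\sigma$).

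Next I would plug in the explicit formulas $\alpha(f)(\xi)=f(\sigma\xi)$ and $L(f)(\eta) = c(r(\eta))^{-1}\sum_{s(e)=r(\eta)}f(e\eta)$, so that
\[
\alpha\bigl(L(\chi_{Z(\mu_1)}z)\bigr)(\xi) = \frac{1}{c(s(\xi_1))}\sum_{s(e)=s(\xi_1)}\chi_{Z(\mu_1)}(e\sigma\xi)\,z(e\sigma\xi).
\]
The factor $\chi_{Z(\mu_1)}(e\sigma\xi)$ vanishes unless $e=\mu_1$, which requires $s(\mu_1)=s(\xi_1)$. In that case the unique surviving term is $c(s(\mu_1))^{-1}z(\mu_1\sigma\xi)$. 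The outer $\chi_{Z(\mu)}(\xi)$ on the right-hand side of the first identity forces us into the situation $\xi\in Z(\mu)$, whence $\xi_1=\mu_1$ and $\mu_1\sigma\xi=\xi$; multiplying by $c(s(\mu_1))\chi_{Z(\mu)}(\xi)$ gives back $\chi_{Z(\mu)}(\xi)z(\xi)$, which is exactly the left-hand side. (When $\xi\notin Z(\mu)$, both sides vanish.)

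For the second identity the computation is parallel, but now the indicator $\chi_{Z(\mu)}(e\sigma\xi)$ sitting inside $L$ forces $e=\mu_1$ together with $\sigma\xi\in Z(\mu_2\cdots\mu_n)$, while the outer $\chi_{Z(\mu_1)}(\xi)$ forces $\xi_1=\mu_1$; together these conditions say exactly $\xi\in Z(\mu)$, and the same collapse argument produces $\chi_{Z(\mu)}(\xi)z(\xi)$ once the $c(s(\mu_1))$ factor is applied. I expect the only real obstacle to be bookkeeping: keeping track of the suppressed quotient map $q$, and noticing that the mechanism that makes everything work is precisely that $\sigma$ restricts to a bijection $Z(\mu_1)\to Z(s(\mu_1))$, so that the averages defining $L$ collapse to single terms as soon as an indicator $\chi_{Z(\mu_1)}$ or $\chi_{Z(\mu)}$ enters the integrand.
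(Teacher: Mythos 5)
Your proposal is correct and follows essentially the same route as the paper: both reduce the operator identities to a pointwise computation on $C_c(E^\infty)$ using the explicit formulas for $\alpha$, $L$, and the rank-one operators, and both exploit that the indicator inside the averaged sum collapses it to the single term $e=\mu_1$. The paper writes out only the first identity and remarks that the second is similar, whereas you spell out both; otherwise the arguments coincide.
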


\begin{proof}
We let $f \in C_c(E^\infty)$ and $\xi \in E^\infty$, and compute:
\begin{align}
c(s(\mu_1))(&\Theta_{ \chi_{Z ( \mu )},\chi_{Z ( \mu_{1} )} } ( f ) ) ( \xi )
= c(s(\mu_1))( \chi_{Z ( \mu )} \cdot \langle \chi_{Z ( \mu_{1} )}, f \rangle_{L} ) ( \xi )\notag\\
&=c(s(\mu_1)) \chi_{Z ( \mu )} ( \xi ) \langle \chi_{Z ( \mu_{1} )}, f \rangle_{L} ( \sigma ( \xi ) )\notag\\ 
&= c(s(\mu_1))\chi_{Z ( \mu )} ( \xi ) c(r(\sigma(\xi)))^{-1}\textstyle{\sum_{s(e)=r(\sigma(\xi))}} \chi_{Z ( \mu_{1} )} ( e\sigma(\xi) ) f (e\sigma(\xi)).\label{formforrank1}
\end{align}
This vanishes unless $\xi=\mu\xi'$,  and  then $e=\mu_1=\xi_1$ is the only edge which gives a non-zero summand: then $e\sigma(\xi)=\xi$, $r(\sigma(\xi))=s(\xi_1)=s(\mu_1)$ and \eqref{formforrank1} is $(\chi_{Z(\mu)}f)(\xi)=(\phi(\chi_{Z(\mu)})(f))(\xi)$. The second formula follows from a similar calculation.
\end{proof}

\begin{proof}[Proof of Theorem~\ref{T:GAasECP}]
The projections $\{P_v\}$ are mutually orthogonal because the $\chi_{Z(v)}$ are. Next, observe that $\langle \chi_{Z(e)},\chi_{Z(e)}\rangle_L=L(\chi_{Z(e)})=c(s(e))^{-1}\chi_{Z(s(e))}$, so
\[
S_e^*S_e=c(s(e))k_{A}\big(\langle \chi_{Z ( e )},\chi_{Z ( e )}\rangle_L\big)=k_{A}(\chi_{Z(s(e))})=P_{s(e)}.
\]
To verify the Cuntz-Krieger relation at a vertex $v$, we compute using covariance and Lemma~\ref{L:leftaction} for $\mu=e$:
\begin{align}\label{calcCK}
\textstyle{\sum_{r ( e ) = v}} S_{e} S_{e}^{*}
&= \textstyle{\sum_{r ( e ) = v}} c(s(e))k_{M_L}( \chi_{Z ( e )} ) k_{M_L} ( \chi_{Z ( e )} )^{*}
\\&= \textstyle{\sum_{r( e ) = v}} (k_{M_L},k_{A} )^{( 1 )} ( c(s(e))\Theta_{ \chi_{Z ( e )}, \chi_{Z ( e )} } )\notag
\\&= \textstyle{\sum_{r ( e ) = v}} (k_{M_L},k_{A})^{( 1 )} ( \phi( \chi_{Z ( e )} ) )\notag
\\&= k_{A}( \textstyle{\sum_{r ( e ) = v}} \chi_{Z ( e )} )\notag
\\&= k_{A}( \chi_{Z ( v )} )
= P_{v}.\notag
\end{align}
So $\{S_e,P_v\}$ is a Cuntz-Krieger $E$-family, and gives a homomorphism $\pi_{S,P}:C^*(E)\to \OO(M_L)$. Since $k_{A}$ is faithful (Corollary~\ref{ansexelQ}), the projections $p_v$ are all non-zero, and the gauge-invariant uniqueness theorem for graph algebras implies that $\pi_{S,P}$ is faithful. 

To see that $\pi_{S,P}$ is surjective, it suffices to show that every $k_{M_L}(\chi_{Z(\mu)})$ and every $k_{A}(\chi_{Z(\mu)})$ belongs to $\range\pi_{S,P}$. We prove by induction that $k_{M_L}(\chi_{Z(\mu)})\in\range \pi_{S,P}$ for every $\mu\in E^{n+1}$ and $k_{A}(\chi_{Z(\nu)})\in\range \pi_{S,P}$ for every $\nu\in E^{n}$. This is true for $n=0$ by definition of $S_e$ and $P_v$. Suppose it is true for $n=k$, and let $\nu\in E^{k+1}$ and $\mu\in E^{k+2}$. Using Lemma~\ref{L:leftaction}, we have
\begin{align}\label{indhyppi}
k_{A}( \chi_{Z ( \nu )} )
&= (k_{M_L},k_{A})^{( 1 )} ( \phi( \chi_{Z ( \nu )} ) )
= ( k_{M_L},k_{A})^{( 1 )} ( \Theta_{ \chi_{Z ( \nu )}, \chi_{Z ( \nu_{1} )} } )\\
&=k_{M_L}( \chi_{Z ( \nu )} ) k_{M_L}( \chi_{Z ( \nu_{1} )} )^{*},\notag
\end{align}
which belongs to $\range\pi_{S,P}$ by the inductive hypothesis. Next, we use the inductive hypothesis on $k_{M_L}$ and \eqref{indhyppi} (for $\nu=\mu_{2} \cdots \mu_{n + 2}$) to see that
\begin{align}\label{psiin}
k_{M_L}( \chi_{Z ( \mu )} )&= k_{M_L}( \chi_{Z ( \mu_{1} )} \alpha(\chi_{Z ( \mu_{2} \cdots \mu_{n + 2} )}) )\\
&= k_{M_L}( \chi_{Z ( \mu_{1} )} \cdot \chi_{Z ( \mu_{2} \cdots \mu_{n + 2} )}  )\notag\\
&= k_{M_L}( \chi_{Z ( \mu_{1} )} ) k_{A} ( \chi_{Z ( \mu_{2} \cdots \mu_{n + 2} )} )\notag
\end{align}
belongs to $\range\pi_{S,P}$. Thus  $\pi_{S,P}$ is surjective.

The second formula in \eqref{relateSPtopipsi} follows from a calculation like that in \eqref{psiin}. We prove the first formula by induction on $n$. It is trivially true for $n=0$. So suppose it is true for $n=k$. Now we let $\mu\in E^{k+1}$ and calculate, using Lemma~\ref{L:leftaction} again:
\begin{align*}
k_{A}(\chi_{Z(\mu)})=k_{A}(\chi_{Z(\mu)})^2&=(k_{M_L},k_{A})^{(1)}\big(c(s(\mu_1))^2\Theta_{ \chi_{Z ( \mu_1)}, \chi_{Z ( \mu )} }\Theta_{ \chi_{Z ( \mu)}, \chi_{Z ( \mu_1)} }\big)\\
&=c(s(\mu_1))^2k_{M_L}(\chi_{Z ( \mu_1)})k_{M_L}(\chi_{Z ( \mu )})^*k_{M_L}(\chi_{Z ( \mu)})k_{M_L}(\chi_{Z ( \mu_1)})^*\\
&=c(s(\mu_1))^2k_{M_L}(\chi_{Z ( \mu_1)})k_{A}\big(\langle\chi_{Z ( \mu )},\chi_{Z ( \mu)}\rangle_L\big)k_{M_L}(\chi_{Z ( \mu_1)})^*\\
&=c(s(\mu_1))S_{\mu_1}k_{A}(L(\chi_{Z(\mu)}))S_{\mu_1}^*.
\end{align*}
A quick calculation on the side shows that $L(\chi_{Z(\mu)})=c(s(\mu_1))^{-1}\chi_{Z(\mu_2\cdots\mu_{k+1})}$, so the inductive hypothesis implies that
\[
k_{A}(\chi_{Z(\mu)})=S_{\mu_1}(S_{\mu_2\cdots\mu_{k+1}}S_{\mu_2\cdots\mu_{k+1}}^*)S_{\mu_1}^*=S_\mu S_\mu^*.\qedhere
\]
\end{proof}

\section{Simplicity for classical systems}\label{The system (C_0(X),alpha,L)}

To find criteria for the simplicity of crossed products $C_0(T)\rtimes_{\alpha,L}\N=\OO(M_L)$, we want to use Katsura's general theory of topological graphs \cite{k,k2} (as in \cite{er}): to study the classical system $(T,\tau)$, we use the topological graph $E=(T,T,\tau,\id)$. The bimodule $M_L$ is not quite the same as the bimodule $C_\tau(E)$ appearing in \cite{k}, but it is isomorphic to it (this too has been noticed elsewhere, including \cite{im}).  Indeed, both bimodules can be viewed as completions of $C_c(T)$, the only difference being that the inner product $\langle \cdot,\cdot\rangle_E$ in $C_\tau(E)$ satisfies
\[
\langle f,g\rangle_E=\sum_{\tau(s)=t}\overline{f(s)}g(t) =|\tau^{-1}(t)|\langle f,g\rangle_L(t).
\]
The formula $U(f)(t)=\sqrt{|\tau^{-1}(\tau(t))|}f(t)$ defines a $C_0(T)$--$C_0(T)$ bimodule homomorphism $U$ from $C_c(T)\subset C_\tau(E)$ to $C_c(T)\subset M_L$ such that $\langle Uf,Ug\rangle_L=\langle f,g\rangle_E$. Thus $U$ extends to an isomorphism of Hilbert bimodules, and the Cuntz-Pimsner algebras $\OO(E):=\OO(C_\tau(E))$ and $C_0(T)\rtimes_{\alpha,L}\N=\OO(M_L)$ are isomorphic. Thus we can use Katsura's results to study $C_0(T)\rtimes_{\alpha,L}\N$. 

We next describe the faithful representations of $C_0(T)\rtimes_{\alpha,L}\N$. Following Exel-Vershik \cite{ev}, we say that $( T, \tau)$ is \emph{topologically free} if the sets $H_{m,n}:=\{t \in T \mid \tau^{m} ( t ) = \tau^{n} ( t )\}$ have empty interior for every $m\not=n\in\N$. The next result extends Theorem~10.3 of \cite{ev}.

\begin{thm}\label{CKunique}
Suppose that $\tau:T\to T$ is a proper local homeomorphism such that $( T, \tau )$ is topologically free, and $(\psi,\pi)$ is a covariant representation of $M_L$ such that $\pi$ is faithful. Then $\psi\times\pi$ is faithful on $\OO(M_L)=C_{0} ( T ) \rtimes_{\alpha, L} \N$.
\end{thm}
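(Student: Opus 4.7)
The plan is to invoke Katsura's Cuntz--Krieger uniqueness theorem for topological graph $C^*$-algebras \cite{k,k2}, applied to the topological graph $E=(T,T,\tau,\id)$ via the isomorphism $\OO(M_L)\cong\OO(E)$ constructed just above the theorem. The bimodule isomorphism $U:C_\tau(E)\to M_L$ is $C_0(T)$-linear on both sides, so the covariant Toeplitz representation $(\psi,\pi)$ of $M_L$ pulls back to a Cuntz--Krieger $E$-family whose vertex-algebra representation is $\pi$; by Corollary~\ref{cpclassical} the covariance (coisometric condition) is automatic on the whole coefficient algebra $C_0(E^0)=C_0(T)$. Thus faithfulness of $\psi\times\pi$ on $\OO(M_L)$ will follow from Katsura's theorem as soon as I verify (a) faithfulness of $\pi$, which is part of the hypothesis, and (b) that $E$ is topologically free in Katsura's sense.

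The real work lies in (b). For the specific graph $E=(T,T,\tau,\id)$, paths of length $n$ correspond to initial orbit segments $(t,\tau(t),\ldots,\tau^{n-1}(t))$, and loops based at $v$ correspond to periodic points satisfying $\tau^n(v)=v$. Katsura's topological-freeness condition (the continuous analogue of the graph condition ``every cycle has an entry'') then reduces to a statement that the set of base points of such loops, subject to the appropriate entry condition, has empty interior in $T$. The set of $n$-periodic points of $\tau$ is precisely $H_{n,0}$, and more generally the sets detecting eventually-periodic behaviour lie inside the various $H_{m,n}$ for $m\neq n$. Each of these has empty interior by the Exel--Vershik hypothesis, so applying the Baire category theorem in the locally compact Hausdorff space $T$ to the countable union $\bigcup_{m\neq n}H_{m,n}$ yields the required emptiness-of-interior condition, from which Katsura's topological freeness of $E$ follows.

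The principal obstacle I foresee is the careful bookkeeping to match Katsura's intrinsic definition of topological freeness for $E$ (phrased in terms of loops and their entries in the topological graph) with the purely orbital Exel--Vershik condition on $(T,\tau)$. Once this dictionary is in place for the specific graph $E=(T,T,\tau,\id)$, the theorem follows immediately by combining Katsura's uniqueness theorem with the Baire-category argument above. It is worth noting that this approach also explains why the theorem should extend the compact case \cite[Theorem~10.3]{ev}: in that setting $T$ is compact and the same translation between orbital and graph-theoretic freeness has already appeared implicitly.
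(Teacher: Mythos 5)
Your proposal is correct and follows essentially the same route as the paper: the theorem is reduced to Katsura's uniqueness theorem for the topological graph $E=(T,T,\tau,\id)$ via the isomorphism $\OO(M_L)\cong\OO(E)$, and the only substantive step is the dictionary between Exel--Vershik topological freeness and Katsura's, established exactly as you describe by identifying the base points of loops (which automatically have no entries since the range map is the identity) with $\bigcup_n H_{0,n}$ and applying the Baire category theorem.
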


We need to relate the Exel-Vershik notion of topological freeness which we are using to the one used in \cite{k}, and then Theorem~\ref{CKunique} follows immediately from \cite[Theorem~5.10]{k}.

\begin{lemma}\label{L:topfree}
The system $( T, \tau )$ is topologically free if and only if the topological graph $E=(T,T,\tau,\id)$ is topologically free.
\end{lemma}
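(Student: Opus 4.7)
The plan is to translate Katsura's definition of topological freeness for the particular topological graph $E=(T,T,\tau,\id)$ into a condition on the dynamics of $\tau$, and then match that condition with the Exel-Vershik definition via a Baire argument in one direction and the openness of $\tau$ in the other.

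First I would unpack Katsura's condition — the base points of loops without entries form a subset of $E^0$ with empty interior — for $E=(T,T,\tau,\id)$. With domain map $d=\tau$ and range map $r=\id$, the path-compatibility requirement $d(\mu_i)=r(\mu_{i+1})$ in $\mu_1\cdots\mu_n\in E^n$ collapses to $\mu_{i+1}=\tau(\mu_i)$, so $\mu_i=\tau^{i-1}(\mu_1)$, and the loop condition $r(\mu_1)=d(\mu_n)$ becomes $\mu_1=\tau^n(\mu_1)$. Thus loops of length $n$ based at $v$ are in bijection with the points of $H_{0,n}=\{t\in T:\tau^n(t)=t\}$. Moreover, because $r=\id$ has singleton fibres, any putative entry $e$ satisfying $r(e)=r(\mu_i)$ with $e\neq\mu_i$ cannot exist, so loops in $E$ automatically have no entries. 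Hence the set of base points of loops without entries is exactly $\Per(\tau):=\bigcup_{k\geq 1}H_{0,k}$, and $E$ is topologically free iff $\Per(\tau)$ has empty interior in $T$.

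For the forward direction, assume $(T,\tau)$ is topologically free. Specialising the hypothesis to $m=0$ and $n=k$ gives that $H_{0,k}$ has empty interior for every $k\geq 1$; being the equaliser of the continuous maps $\tau^k$ and $\id$ into the Hausdorff space $T$, each $H_{0,k}$ is also closed, hence nowhere dense. Since $T$ is locally compact Hausdorff and therefore Baire, the countable union $\Per(\tau)=\bigcup_{k\geq 1}H_{0,k}$ has empty interior, whence $E$ is topologically free.

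For the converse, assume $\Per(\tau)$ has empty interior and fix $m<n$. A routine substitution shows $H_{m,n}=(\tau^m)^{-1}(H_{0,n-m})$, and $H_{0,n-m}\subset\Per(\tau)$. Since $\tau$ is a local homeomorphism, $\tau^m$ is an open map, so any nonempty open $U\subset H_{m,n}$ would give a nonempty open subset $\tau^m(U)$ of $\Per(\tau)$, contradicting our assumption; the case $m>n$ is symmetric. The main obstacle is really the bookkeeping in the first paragraph: one must verify carefully that Katsura's no-entry clause is vacuous for this particular graph, so that ``base points of loops without entries'' coincides with $\Per(\tau)$. Once that translation is secured, the equivalence reduces to a clean combination of Baire's theorem with the openness of $\tau$.
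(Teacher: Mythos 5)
Your proposal is correct and follows essentially the same route as the paper: identify the base points of loops without entries in $E=(T,T,\tau,\id)$ with $\bigcup_{k\ge 1}H_{0,k}$ (noting the no-entry clause is vacuous because the range map is the identity), use the Baire category theorem for the forward direction, and use openness of $\tau^m$ together with $\tau^m(H_{m,n})\subset H_{0,n-m}$ for the converse. The only cosmetic difference is that you argue the converse contrapositively and make explicit that each $H_{0,k}$ is closed, which the paper leaves implicit.
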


\begin{proof}
Suppose that $(T,\tau)$ is topologically free. We need to show that the set of base points of loops without entries has empty interior. The loops in $E$ are the paths $t\tau(t)\cdots\tau^n(t)$ with $t=\tau^{n}(t)$; an entry would be an element $s\in E^1=T$ which has the same range as some $\tau^i(t)$ but is not itself $\tau^i(t)$, and since the range map in $E$ is the identity, there is no such $s$. So the set of base points of loops without entries is $\bigcup_{n=1}^\infty\{t:t=\tau^n(t)\}=\bigcup_{n=1}^\infty H_{0,n}$. Since each $H_{0,n}$ has no interior, the Baire category theorem for locally compact spaces \cite[Theorem~48.2]{munk} implies that $\bigcup_{n=1}^\infty H_{0,n}$ has empty interior.

Now suppose that $( T, \tau )$ is not topologically free. Then there exists $( m, n ) \in \N^{2}$ with $m > n$ such that $H_{m,n}$ contains an open set $V$. Since local homeomorphisms are open mappings, $\tau^{n} ( V )$ is open, and since $\tau^{n} (V) \subset H_{0,m-n}$, the set $\bigcup_{k=1}^\infty H_{0,k}$ has interior.
\end{proof}

\begin{example}\label{topfree=L}
Suppose that $E$ is a locally finite graph with no sources. We claim that the system $(E^\infty,\sigma)$ is topologically free if and only if every cycle in $E$ has an entry. 

First suppose that $(E^\infty,\sigma)$ is topologically free, and $\mu\in E^n$ is a cycle. Then $\mu\mu\mu\cdots$ belongs to $H_{0,n}$. Since $H_{0,n}$ has empty interior, the set $Z(r(\mu))$ cannot be contained in $H_{0,n}$, and there exists $\xi$ with $r(\xi)=r(\mu)$ but $\xi\not=\sigma^n(\xi)$. Then $\xi\not=\mu\mu\mu\cdots$, and the first $\xi_k$ which is not equal to $(\mu\mu\mu\cdots)_k$ is an entry to $\mu$.

Conversely, suppose every cycle in $E$ has an entry. We fix $m<n$, and aim to show that $H_{m,n}$ has empty interior. If $H_{m,n}$ is empty, this is trivially true, so suppose there exists $\xi\in H_{m,n}$. Then $\mu:=\xi_{m+1}\cdots\xi_{n}$ has $r(\mu)=s(\mu)$, hence contains a cycle, hence has an entry, say $e$ with $r(e)=r(\mu_j)$ but $e\not=\mu_j$. Choose $\eta\in E^\infty$ with $r(\eta)=s(e)$. Because $\xi$ is in $H_{m,n}$, $\xi_{m+k(n-m)+j}=\xi_m+j=\mu_j$ for every $k\in\N$, and then $\zeta^{(k)}:=\xi_1\cdots\xi_{m+k(n-m)+j-1}e\eta$ is a sequence in $E^\infty\setminus H_{m,n}$ which converges to $\xi$. So no point of $H_{m,n}$ is an interior point, and the claim is proved.  

The first formula in \eqref{relateSPtopipsi} shows that if $\{T,Q\}$ is a Cuntz-Krieger family on Hilbert space, then the corresponding covariant representation $(\theta,\rho)$ of $M_L$ satisfies $\rho(\chi_{Z(\mu)})=T_\mu T_\mu^*$. Theorem~\ref{CKunique} says that $\theta\times \rho$ is faithful if and only if $\rho$ is faithful on $C(E^\infty)$. On the face of it, this is weaker than the Cuntz-Krieger uniqueness theorem, which says that $\pi_{T,Q}$ is faithful if and only if $Q_v\not=0$ for every $v\in E^0$, and implies that $\theta\times \rho$ is faithful if and only if every $Q_v\not=0$. However, $C(E^\infty)$ is the direct limit of the subalgebras $D_n=\clsp\{\chi_{Z(\mu)}:|\mu|=n\}$. If every $Q_v$ is non-zero, then every $S_\mu S_\mu^*=Q_{s(\mu)}\not=0$, the projections $\{S_\mu S_\mu^*:|\mu|=n\}$ are mutually orthogonal and non-zero, $\rho$ is faithful on each $D_n$, and hence also on the direct limit $C(E^\infty)$ by \cite[Proposition~A.8 ]{r}. So Theorem~\ref{CKunique}, as it applies to $(E^\infty,\sigma)$, is equivalent to the Cuntz-Krieger theorem for $E$. 
\end{example}

Next we characterise the systems $(T,\tau)$ for which $C_{0} ( T ) \rtimes_{\alpha, L} \N$ is simple. Again following \cite{ev}, we say that a subset $Y$ of $T$ is \emph{invariant}\footnote{In \cite{ev}, they define $Y$ to be invariant if $t\in Y\text{ and }\tau^m(s)=\tau^n(t)\Longrightarrow s\in Y$, and claim that this is equivalent to $\tau^{-1}(Y)\subset Y$. We think they inadvertently omitted the extra condition $\tau(Y)\subset Y$, since it has to be there: for example, with $\tau:\T\to \T$ given by $\tau(z)=z^2$, the set $Y=\{\exp(2\pi i k2^{-n}):k\in \N,n\geq 2\}$ satisfies $\tau^{-1}(Y)\subset Y$ but is not invariant.} if we have $\tau(Y)\subset Y$ and $\tau^{-1}(Y)\subset Y$, and that $(T,\tau)$ is \emph{irreducible} if the only closed invariant subsets are $\emptyset$ and $T$. Our version of \cite[Theorem 11.2]{ev} differs from that theorem in that we need to assume topological freeness as well as irreducibility. When $\tau$ is a covering map on an infinite compact space, irreducibility implies topological freeness \cite[Proposition~11.1]{ev}, but this is not true for locally compact $T$, as our later examples show.

\begin{thm}\label{T:simple}
Suppose that $\tau:T\to T$ is a proper local homeomorphism. Then $C_0(T)\rtimes_{\alpha,L}\N$ is simple if and only if $(T,\tau)$ is topologically free and $\tau$ is irreducible.
\end{thm}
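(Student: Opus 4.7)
The plan is to convert the simplicity question into one about the topological-graph algebra $\OO(E)$ for $E=(T,T,\tau,\id)$, and then apply Katsura's general simplicity criterion from \cite{k2}. By the discussion at the start of this section, together with Corollary~\ref{cpclassical}, we have a chain of isomorphisms
\[
C_0(T)\rtimes_{\alpha,L}\N = \OO(M_L) \cong \OO(E),
\]
so it suffices to show that $\OO(E)$ is simple if and only if $(T,\tau)$ is topologically free and irreducible.

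Katsura's theorem says that $\OO(E)$ is simple if and only if $E$ is both topologically free and minimal (i.e.\ has no non-trivial closed invariant subsets of $E^0$). The topological-freeness half has already been dealt with: Lemma~\ref{L:topfree} shows that topological freeness of the graph $E$ matches the Exel-Vershik topological freeness of $(T,\tau)$. So the only thing left to check is that minimality of the graph $E$ is the same thing as irreducibility of $(T,\tau)$.

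For this I would unpack Katsura's definition of an invariant closed subset $Y\subset E^0=T$ for the specific graph $E=(T,T,\tau,\id)$. In this graph $s=\id$ and $r=\tau$, so $s^{-1}(Y)=Y$, $r(s^{-1}(Y))=\tau(Y)$, $r^{-1}(Y)=\tau^{-1}(Y)$, and $s(r^{-1}(Y))=\tau^{-1}(Y)$. The two halves of Katsura's invariance condition therefore reduce precisely to $\tau(Y)\subset Y$ and $\tau^{-1}(Y)\subset Y$, which is the Exel-Vershik definition used to define irreducibility. Consequently the lattice of closed invariant subsets coincides in the two frameworks, and minimality of $E$ is equivalent to irreducibility of $(T,\tau)$.

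The main obstacle is the bookkeeping in the last step: Katsura's notion of invariance is stated for general topological graphs, where one must keep track of sinks, sources and regular vertices, and it is not immediate that the definition collapses to the clean Exel-Vershik condition. For our graph every vertex $t$ satisfies $s^{-1}(t)=\{t\}$, so the graph has no sinks and every vertex is regular, and these subtleties disappear. Once the dictionary between Katsura's invariant sets and Exel-Vershik invariant sets is established, the theorem follows immediately by combining Lemma~\ref{L:topfree}, the isomorphism $\OO(M_L)\cong\OO(E)$, and Katsura's simplicity criterion.
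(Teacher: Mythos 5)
Your proposal is correct and follows essentially the same route as the paper: identify $C_0(T)\rtimes_{\alpha,L}\N$ with $\OO(E)$ for $E=(T,T,\tau,\id)$, invoke Lemma~\ref{L:topfree} for the topological-freeness half, match minimality of $E$ with irreducibility of $(T,\tau)$, and apply Katsura's simplicity theorem; your unpacking of the invariance condition is precisely the step the paper dismisses as ``easy to see.'' The only quibble is that you have swapped the roles of the range and source maps (the paper's convention for $E=(T,T,\tau,\id)$ has range map $\id$ and source map $\tau$), but since the two halves of Katsura's invariance condition still reduce to $\tau(Y)\subset Y$ and $\tau^{-1}(Y)\subset Y$ either way, this does not affect the argument.
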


\begin{proof}
Lemma~\ref{L:topfree} says that $(T,\tau)$ is topologically free if and only if $E=(T,T,\tau,\id)$ is, and it is easy to see that $E$ is minimal in the sense of \cite{k2} if and only if $(T,\tau)$ is irreducible, so the result follows immediately from  Theorem~8.12 of \cite{k2}.
\end{proof}

\begin{example}
Suppose that $E$ is a locally finite graph with no sources. We claim that  $(E^\infty,\sigma)$ is irreducible if and only if $E$ is cofinal. This claim and the one in Example~\ref{topfree=L} say that the criteria in Theorem~\ref{T:simple} to $(E^\infty,\sigma)$ reduce to the known criteria for simplicity of $C^*(E)=C_0(E^\infty)\rtimes_{\alpha,L}\N$, as in \cite[Proposition~5.1]{bprs} or \cite[Theorem~4.14]{r}.

Suppose $E$ is cofinal, and $Y$ is a nonempty open invariant subset of $E^\infty$. Let $\xi\in E^\infty$. Since $Y$ is open, it contains a cylinder set $Z(\mu)$, and cofinality implies that there exists $\nu\in E^*$ with $r(\nu)=s(\mu)$ and $s(\nu)=\xi_k$ for some $k$. Then $\eta:=\mu\nu\xi_k\xi_{k+1}\cdots$ is in $Z(\mu)\subset Y$, and since $\sigma^k(\xi)=\sigma^{|\mu|+|\nu|}(\eta)$, invariance of $Y$ implies that $\xi\in Y$. Thus $Y=E^\infty$, as required. Conversely, suppose that $(E^\infty,\sigma)$ is irreducible. Then for $v\in E^\infty$, 
\[
Y_{v} := \{\xi\in E^{\infty}:\text{there exists $\mu\in E^*$ with $r(\mu)=v$ and $s(\mu)=r(\xi_k)$ for some $k$}\}
\]
is a non-empty open invariant subset of $E^{\infty}$, hence all of $E^\infty$. But this says precisely that $v$ can be reached from every infinite path in $E$, and hence that $E$ is cofinal.
\end{example}

\section{Gauge-invariant ideals in crossed products for classical systems}\label{secgauge}

We study the gauge-invariant ideals of crossed products  associated to classical systems $(T,\tau)$ using the general theory of \cite{k2}.

\begin{lemma}\label{L:YIinv}
For every ideal $I$ of $C_{0} ( T ) \rtimes_{\alpha, L} \N$, 
\begin{equation}\label{form for I Y}
Y_{I}:= \{ t \in T : \mbox{$f ( t ) = 0$ for all $f \in C_0(T)$ such that $k_{A} ( f ) \in I$} \},
\end{equation}
is a closed invariant subset of $T$ in $( T, \tau )$.
\end{lemma}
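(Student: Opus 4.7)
The plan is to set $J_I := k_A^{-1}(I)$ and use this to put the lemma into a purely algebraic form. Since $k_A$ is a $*$-homomorphism, $J_I$ is a closed ideal in the commutative $C^*$-algebra $C_0(T)$, and Gelfand duality identifies it with $C_0(T \setminus Y_I)$; this gives closedness of $Y_I$ immediately. The two invariance conditions $\tau(Y_I) \subset Y_I$ and $\tau^{-1}(Y_I) \subset Y_I$ are then equivalent, respectively, to $\alpha(J_I) \subset J_I$ and $L(J_I) \subset J_I$, and I would establish each using the corresponding Toeplitz-covariance relation.

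The easier half is $L(J_I) \subset J_I$. Relation (TC2) reads $V^* k_A(f) V = k_A(L(f))$ with $V := \overline{Q}(S)$, and since $V \in M(A \rtimes_{\alpha, L} \N)$ and $I$ is two-sided, $k_A(f) \in I$ forces $k_A(L(f)) \in I$, hence $L(f) \in J_I$. To deduce $\tau^{-1}(Y_I) \subset Y_I$ I would take $t$ with $\tau(t) \in Y_I$ and $f \in J_I$, note that $f^*f \in J_I$ and therefore $L(f^*f)(\tau(t)) = 0$; the explicit formula for $L$ then reads
\[
|\tau^{-1}(\tau(t))|^{-1}\sum_{\tau(s) = \tau(t)} |f(s)|^2 = 0,
\]
and positivity of the summands forces $f(t) = 0$, i.e.\ $t \in Y_I$.

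The hard part will be $\alpha(J_I) \subset J_I$. The first covariance relation only gives $V k_A(f) = k_A(\alpha(f)) V$, and since $V$ is merely an isometry (not a unitary) this does not by itself imply $k_A(\alpha(f)) \in I$. Instead, I would exploit the fact that for a classical system $\phi(C_0(T)) \subset \KK(M_L)$ and $K_\alpha = C_0(T)$, so coisometry gives $k_A(\alpha(f)) = (k_{M_L},k_A)^{(1)}(\phi(\alpha(f)))$. For $f \in J_I \cap C_c(T)$, properness of $\tau$ yields $\alpha(f) \in C_c(T)$, so \eqref{LHScpct} provides the decomposition $\phi(\alpha(f)) = \sum_i \Theta_{\alpha(f)g_i, g_i}$. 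Commutativity rewrites $\alpha(f)g_i = g_i \alpha(f)$, and the Toeplitz identity $k_{M_L}(q(g_i) \cdot f) = k_{M_L}(q(g_i))k_A(f)$ for the right action converts the decomposition into
\[
k_A(\alpha(f)) = \sum_i k_{M_L}(q(g_i))\, k_A(f)\, k_{M_L}(q(g_i))^*,
\]
which lies in $I$ because $k_A(f) \in I$ and $I$ is two-sided. Density of $J_I \cap C_c(T)$ in $J_I$ together with continuity of $\alpha$ and $k_A$ extends the conclusion to arbitrary $f \in J_I$, and then for $t \in Y_I$, $f \in J_I$, the equality $f(\tau(t)) = \alpha(f)(t) = 0$ gives $\tau(t) \in Y_I$, completing the proof.
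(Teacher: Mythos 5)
Your proof is correct, and it differs from the paper's in both halves of the invariance argument in ways worth noting. For $\tau^{-1}(Y_I)\subset Y_I$, the paper conjugates $k_A(f)$ by $k_{M_L}(g)$ for a bump function $g$ supported where $\tau$ is injective near $t$, and reads off $f(t)=0$ from $\langle g,\phi(f)g\rangle_L(\tau(t))=0$; you instead conjugate by the multiplier $V=\overline{Q}(S)$ and invoke (TC2) globally to get $L(J_I)\subset J_I$, then evaluate $L(f^*f)$ at $\tau(t)$. Your version avoids the localisation entirely and is cleaner, at the small cost of having to note that closed ideals are invariant under multipliers. For $\tau(Y_I)\subset Y_I$, the paper proves the dual inclusion $C_c(T\setminus\tau^{-1}(Y_I))\subset k_A^{-1}(I)$, which requires checking that the inner products $\langle fg_i,fg_i\rangle_L$ vanish on $Y_I$ before concluding $k_{M_L}(fg_i)\in I$; you instead prove $\alpha(J_I)\subset J_I$ directly by combining \eqref{LHScpct} with the right-module identity $q(g_i\alpha(f))=q(g_i)\cdot f$ to get
\[
k_A(\alpha(f))=\sum_i k_{M_L}(q(g_i))\,k_A(f)\,k_{M_L}(q(g_i))^*,
\]
which lands in $I$ with no computation of inner products at all. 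Both routes rest on the same two pillars --- the compactness decomposition \eqref{LHScpct} together with coisometricity on $K_\alpha=C_0(T)$, and the explicit averaging formula for $L$ --- but your formulation pushes more of the work onto the covariance relations and the bimodule structure, which makes the argument shorter and arguably more transparent; the paper's version has the minor advantage of exhibiting explicitly which functions $k_A$ sends into $I$, namely all of $C_c(T\setminus\tau^{-1}(Y_I))$, a fact it reuses later (e.g.\ in the proof of Theorem~\ref{main thm for prim ideals section}).
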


\begin{proof}
The set $Y_I$ is the kernel of the ideal $k_A^{-1}(I)$, so it is closed. Propositions 2.5 and 2.7 of \cite{k2} say that $Y_{I}$ is an invariant subset of the topological graph $E=(T,T,\tau,\id)$, which is the same thing as invariance in $(T,\tau)$. However, it is easy to give a short direct proof. First, we suppose that $\tau(t)\in Y_I$ and aim to prove that $t\in Y_I$. Let $f\in C_0(T\setminus Y_I)$; we need to prove that $f(t)=0$. Choose $g$ such that $g(t)=1$ and $g$ has support in a neighbourhood of $t$ on which $\tau$ is one-to-one. Since $k_A(\langle g, \phi(f)g\rangle_L)=k_{M_L}(g)^*k_A(f)k_{M_L}(g)$ is in $I$, we have $\langle g, \phi(f)g\rangle_L(\tau(t))=0$, and the calculation 
\[
0=\langle g, \phi(f)g\rangle_L(\tau(t))=\frac{1}{|\tau^{-1}(\tau(t))|}\sum_{\tau(s)=\tau(t)}|g(s)|^2f(s)=|\tau^{-1}(\tau(t))|^{-1}f(t)
\]
shows that $f(t)=0$, as required.

We show that $\tau(Y_I)\subset Y_I$ by proving that $k_A(C_c(T\setminus\tau^{-1}(Y_I)))\subset I$. Let $f\in C_c(T\setminus\tau^{-1}(Y_I))$, and write $\phi(f)=\sum_{i}\Theta_{fg_i,g_i}$ as in \eqref{LHScpct}. Then for $t\in Y_I$ and each $i$, we have
\[
\langle \phi(f)g_i,\phi(f)g_i\rangle_L(t)=\langle fg_i,fg_i\rangle_L(t)=L(|fg_i|^2)(t)=\frac{1}{|\tau^{-1}(t)|}\sum_{\tau(s)=t}|fg_i|^2(s)=0,
\]
because $f$ vanishes on $\tau^{-1}(Y_I)$. Thus $k_{M_L}(fg_i)^*k_{M_L}(fg_i)=k_A(\langle fg_i,fg_i\rangle_L)$ belongs to $I$, and so does $k_{M_L}(fg_i)$. Thus 
\[
k_A(f)=(k_A,k_{M_L})^{(1)}(\phi(f))=(k_A,k_{M_L})^{(1)}\big(\textstyle{\sum_i}\Theta_{fg_i,g_i}\big)=\sum_i k_{M_L}(fg_i)k_{M_L}(fg_i)^*
\]
also belongs to $I$, as required.
\end{proof}

\begin{thm}\label{T:gaugeideal}
Suppose that $(C_0(T),\alpha,L)$ arises from a classical system $(T,\tau)$. Then the map $I \mapsto Y_{I}$ is a bijection from the set of gauge-invariant ideals of $C_{0} ( T ) \rtimes_{\alpha, L} \N$ to the set of closed invariant subsets of $T$. The inverse takes a closed invariant set $Y$ to the ideal $I_Y$ generated by $\{k_A(f):f\in C_c(T\setminus Y)\}$.
\end{thm}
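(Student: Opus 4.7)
The approach is to reduce the theorem to Katsura's classification of gauge-invariant ideals of topological graph $C^*$-algebras \cite[Theorem~5.5]{k2}, via the isomorphism $C_0(T)\rtimes_{\alpha,L}\N \cong \OO(E)$ for the topological graph $E = (T,T,\tau,\id)$ recorded at the start of Section~\ref{The system (C_0(X),alpha,L)}. Lemma~\ref{L:YIinv} already supplies the forward direction: $I\mapsto Y_I$ lands in the closed invariant subsets of $T$, so the main work is to identify its inverse.

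For the reverse direction, given a closed invariant $Y\subset T$, I would first observe that $I_Y$ is indeed gauge-invariant, since its generators $k_A(f)$ have gauge-weight zero. Next I would show that $I_Y$ agrees with the ideal associated to $Y$ under Katsura's bijection. Katsura's recipe assigns to an invariant vertex-subset the ideal generated by the $k_A$-image of $\{f\in C_0(T):f|_Y=0\}=C_0(T\setminus Y)$, and by density this is the same as the ideal generated by $\{k_A(f):f\in C_c(T\setminus Y)\}$, namely $I_Y$.

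To finish, I would verify that $I\mapsto Y_I$ and $Y\mapsto I_Y$ are mutually inverse. The inclusion $Y\subset Y_{I_Y}$ follows directly from the definitions of $I_Y$ and $Y_{I_Y}$, while $Y_{I_Y}\subset Y$ uses that $k_A$ descends to a faithful map on $C_0(Y)$ modulo $I_Y$, which is precisely the content of Katsura's correspondence (or equivalently follows from a gauge-invariant uniqueness argument on the quotient). The equality $I_{I_Y}=I$ follows by the same mechanism: the quotient by $I_{Y_I}$ admits a faithful embedding of $C_0(Y_I)$ and a gauge action, forcing $I_{Y_I}=I$.

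The principal obstacle will be reconciling notation: one must check that for the topological graph $E=(T,T,\tau,\id)$ the source map $\tau$ produces no non-trivial ``singular'' vertices in Katsura's sense, so that his admissible pairs $(X_0,X_1)$ collapse to the single datum of a closed invariant $X_0\subset T$; and one must check that the Exel--Vershik invariance $\tau(Y)\cup\tau^{-1}(Y)\subset Y$ coincides exactly with the invariance used in \cite[Definition~5.2]{k2} for this $E$. Since the range map of $E$ is the identity and $\tau$ is a proper local homeomorphism with finite, locally constant fibres, both checks are essentially bookkeeping, and once they are in place the theorem follows immediately from \cite[Theorem~5.5]{k2}.
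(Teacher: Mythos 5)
Your proposal follows the same route as the paper: pass to the topological graph $E=(T,T,\tau,\id)$, note that every vertex is regular because the range map is the identity (so Katsura's admissible pairs collapse to single closed invariant sets), and invoke his classification of gauge-invariant ideals --- the paper cites \cite[Theorem~3.19]{k2} rather than Theorem~5.5, but that is bookkeeping. The one place where you elide a genuine step is the identification of Katsura's ideal with $I_Y$. His recipe does \emph{not} directly assign to $Y$ the ideal generated by $k_A(C_0(T\setminus Y))$: it assigns the ideal generated by $J_\rho$, which after unwinding \cite[Definitions~3.1 and 3.3]{k2} and \cite[Lemma~1.14]{k} is $J_Y=\clsp\{k_{M_L}(f)k_{M_L}(g)^*: f,g\in C_c(T\setminus Y)\}$. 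Showing that $J_Y$ and $\{k_A(f):f\in C_c(T\setminus Y)\}$ generate the same ideal is the only real computation in the paper's proof of this half: one inclusion uses the factorisation $k_{M_L}(f)=k_A(h)k_{M_L}(f)$ for $h\in C_c(T\setminus Y)$ with $h|_{\supp f}=1$; the reverse uses that $(k_{M_L},k_A)$ is coisometric on all of $C_0(T)$ together with the decomposition $\phi(f)=\sum_i\Theta_{fg_i,g_i}$ of \eqref{LHScpct}, which gives $k_A(f)=\sum_i k_{M_L}(fg_i)k_{M_L}(g_i)^*\in J_Y$. You should supply this argument rather than appeal to ``density''; once both maps are matched with Katsura's, your closing verification that the two assignments are mutually inverse is redundant, since his theorem already asserts the bijection.
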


\begin{proof}
Since the range map $\id$ is surjective, all the vertices in $E=(T,T,\tau,\id)$ are regular; in the notation of \cite[\S2]{k2}, $E^0_{\rg}=T$ and $E^{0}_{\sg}=\emptyset$. Thus the ``admissible pairs'' in \cite[Definition 2.3]{k2} are $( Y, \emptyset )$ for $Y$ closed and invariant in $T$. So \cite[Theorem~3.19]{k2} implies that $I \mapsto Y_{I}$ is a bijection. It remains to identify the inverse. 

Suppose $Y$ is closed and invariant, giving the admissible pair $\rho=( Y, \emptyset )$. Since $\phi:C_0(T)\to \LL(M_L)$ has range in $\KK(M_L)$ by \eqref{LHScpct}, the algebra $\FF^1$ in \cite[\S3]{k2} is $(k_A,k_{C_\tau(T)})^{(1)}(\KK(C_\tau(T)))$. Thus the ideal $J_\rho$  in \cite[Definition~3.1]{k2} is 
\[
J_\rho=\{(k_A,k_{C_\tau(T)})^{(1)}(x):x\in \ker\omega_Y:\KK(C_\tau(T))\to \KK(C_{\tau|}(Y))\}.
\]
Lemma~1.14 of \cite{k} implies that $\ker\omega_Y$ is 
\[
\KK(C_{\tau|}(T\setminus Y))=\clsp\{\Theta_{f,g}:f,g\in C_c(T\setminus Y)\},
\]
and applying $(k_A,k_{C_\tau(T)})^{(1)}$ shows that (modulo the isomorphism of $C_\tau(T)$ with $M_L$ which carries $\OO(C_\tau(T))$ onto $C_0(T)\rtimes_{\alpha, L}\N$), $J_\rho=J_Y:=\clsp\{k_{M_L}(f)k_{M_L}(g)^*:f,g\in C_c(T\setminus Y)\}$. Thus the ideal $I_\rho$ in \cite[Definition~3.3]{k2} is generated by $J_Y$. 

We now claim that the ideal generated by $J_Y$ is equal to $I_Y$. Let $f\in C_c(T\setminus Y)$, and choose $h\in C_c(T\setminus Y)$ with $h|_{\supp f}=1$. Then $f=hf$, and we have $k_{M_L}(f)=k_A(h)k_{M_L}(f)$. So each $k_{M_L}(f)k_{M_L}(g)^*\in I_Y$, and $J_Y\subset I_Y$. To see that $J_Y$ generates, let  $f\in C_c(T\setminus Y)$.  Since $(k_{M_{L}},k_{A})$ is coisometric on $A$, \eqref{LHScpct} implies that 
\[
k_{A}(f) = (k_{M_{L}},k_{A})^{(1)}(\phi (f)) = (k_{M_{L}},k_{A})^{(1)} \big(\textstyle{\sum_{i}} \Theta_{f g_{i},g_{i}}\big) = \textstyle{\sum_{i}} k_{M_{L}} (f g_{i}) k_{M_{L}}(g_{i})^{*},
\]
belongs to $J_Y$, and so $I_Y$ is contained in the ideal generated by $J_Y$.
\end{proof}

\begin{example}\label{graph ex for gii} Suppose that $E$ is a locally finite graph $E$ without sources. For each closed invariant subset $Y$ of $E^\infty$, the complement $E^\infty\setminus Y$ is open and invariant, and $H_Y:=\{r(\xi):\xi\in E^\infty\setminus Y\}$ is a hereditary and saturated subset of $E^0$, as in \cite[\S4]{r}. Conversely, if $H\subset E^0$ is saturated and hereditary, then $Y_H:=\{\xi : r(\xi_i)\in E^0\setminus H\}$ is a closed invariant subset of $E^\infty$. So Theorem~\ref{T:gaugeideal} confirms that the ideals in $C^*(E)=C(E^\infty)\rtimes_{\alpha,L}\N$ are parametrised by the saturated hereditary subsets $H$ of $E^0$. 

We want to know, however, that the ideal $I_{Y_H}$ is the ideal $I_H$ generated by the projections $\{p_v:v\in H\}$ (as in \cite[\S4]{r}, for example). When we realise $C^*(E)$ as a crossed product, the projections $p_v$ are carried into the elements $k_A(\chi_{Z(v)})$ (see Theorem~\ref{T:GAasECP}). So we need to show that $I_{Y_H}$ is generated by $\{k_A(\chi_{Z(v)}):v\in H\}$. 
Certainly each $k_A(\chi_{Z(v)})$ belongs to $I_{Y_H}$. To see that they generate, we deduce from the Stone-Weierstrass theorem that $C_0(E^{\infty}\setminus Y_H)=\overline{\newspan}\{\XX_{Z(\mu)}:Z(\mu)\subset E^{\infty}\setminus Y_H\}$. We have $Z(\mu)\subset E^{\infty}\setminus Y_H\Longleftrightarrow r(\mu)\in H$. Since $\chi_{Z(\mu)}\leq \chi_{Z(r(\mu))}$ and ideals are hereditary, this implies that $k_A(\chi_{Z(\mu)})\in I_H$ belongs to the ideal generated by the $k_A(\chi_{Z(v)})$. So the $k_A(\chi_{Z(v)})$ generate.
\end{example}

Now we want to decide when every ideal is gauge-invariant, so that Theorem~\ref{T:gaugeideal} gives a description of all the ideals in $C_0(T)\rtimes_{\alpha,L}\N$. We say that $t\in T$ is \emph{periodic} if there exists $n\ge 1$ such that $\tau^n(t)=t$. The smallest such $n$ is called the {\em period}.

\begin{thm}\label{T:GIper}
Suppose that $(C_0(T),\alpha,L)$ arises from a classical system $(T,\tau)$. Then every ideal of $C_{0} ( T ) \rtimes_{\alpha, L} \N$ is gauge-invariant if and only if every periodic point $t$ is a cluster point of $\tau^{-\N}(t):=\bigcup_{n\ge 0}\tau^{-n}(t)$.
\end{thm}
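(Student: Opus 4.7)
The plan is to invoke Katsura's analysis of topological-graph algebras \cite{k2} for the topological graph $E=(T,T,\tau,\id)$, using the isomorphism $C_0(T)\rtimes_{\alpha,L}\N\cong \OO(E)$ from Section~\ref{The system (C_0(X),alpha,L)}. Katsura's characterisation of when every ideal of $\OO(E)$ is gauge-invariant, combined with the parametrisation of gauge-invariant ideals in Theorem~\ref{T:gaugeideal} and the translation of topological freeness in Lemma~\ref{L:topfree}, should reduce the theorem to the equivalence of:
\begin{itemize}
\item[(A)] every periodic point $t\in T$ is a cluster point of $\tau^{-\N}(t)$;
\item[(B)] for every closed invariant $Y\subset T$, the system $(Y,\tau|_Y)$ is topologically free.
\end{itemize}

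For (A)$\Rightarrow$(B), I would argue by contradiction. Suppose $Y$ is closed and invariant and $H_{m,n}(Y):=\{s\in Y:\tau^m(s)=\tau^n(s)\}$ contains a non-empty relatively open subset $V$ of $Y$ for some $m>n$. Because $\tau$ is a local homeomorphism and $\tau(Y)\subset Y$, the image $\tau^n(V)$ is open in $Y$ and consists of points periodic with period dividing $m-n$. Pick such a periodic $t\in\tau^n(V)$. By (A) there is a sequence $s_k\to t$ with $s_k\in\tau^{-\N}(t)\setminus\{t\}$; invariance of $Y$ gives $\tau^{-\N}(t)\subset Y$, and openness of $\tau^n(V)$ in $Y$ forces $s_k\in\tau^n(V)$ for large $k$. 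Each such $s_k$ is then periodic of period dividing $m-n$, and $\tau^{j_k}(s_k)=t$ forces $O(s_k)=O(t)$, so $s_k\in O(t)$. But $O(t)$ is a finite set of distinct points and the points of $O(t)\setminus\{t\}$ are bounded away from $t$, so $s_k\to t$ forces $s_k=t$ eventually, contradicting $s_k\neq t$.

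For (B)$\Rightarrow$(A) I would argue contrapositively. Suppose $t$ is periodic of period $p$ and fails to be a cluster point of $\tau^{-\N}(t)$, so there is an open neighbourhood $U$ of $t$ with $U\cap\tau^{-\N}(t)=\{t\}$. Set $Y:=\overline{\tau^{-\N}(t)}$. Because $t$ is periodic, $\tau^{-\N}(t)$ is invariant under both $\tau$ and $\tau^{-1}$, and continuity of $\tau$ together with the local-homeomorphism property yields that $Y$ is invariant. Any $s\in U\cap Y$ is a limit of a sequence in $\tau^{-\N}(t)$ whose terms eventually lie in $U\cap\tau^{-\N}(t)=\{t\}$, so $s=t$; hence $U\cap Y=\{t\}$, i.e.\ $t$ is isolated in $Y$. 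Thus $\{t\}$ is open in $Y$ and contained in $H_{p,0}(Y)$, showing that $(Y,\tau|_Y)$ is not topologically free, contrary to (B).

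The main obstacle lies in the reduction to the equivalence of (A) and (B): one must confirm that Katsura's characterisation of gauge-invariance of all ideals of $\OO(E)$ translates, through Theorem~\ref{T:gaugeideal} and the isomorphism of Section~\ref{The system (C_0(X),alpha,L)}, precisely into the statement that every restricted classical system $(Y,\tau|_Y)$ with $Y$ closed and invariant is topologically free in the Exel--Vershik sense. Once this reduction is in hand, the dynamical arguments above are elementary manipulations of orbits, local homeomorphisms and closures.
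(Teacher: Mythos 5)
Your proposal is correct, but it enters Katsura's theory at a different point than the paper does, and this makes the argument substantially longer. The paper's proof is essentially a definition chase: Katsura's \cite[Theorem 7.6]{k2} says that all ideals of $\OO(E)$ are gauge-invariant if and only if $E$ is \emph{free}, meaning $\Per(E)=\emptyset$; since $\Orb^+(t)=\tau^{-\N}(t)$ for $E=(T,T,\tau,\id)$ and the second condition in Katsura's definition of $\Per(E)$ is automatic because the range map $\id$ is injective, the set $\Per(E)$ consists exactly of the periodic points that are isolated in $\tau^{-\N}(t)$ --- so freeness \emph{is} the cluster-point condition, with nothing left to prove. You instead take Katsura's criterion in the equivalent form ``every restriction $E_Y$ to a closed invariant subset is topologically free'' and then prove the dynamical equivalence of that with the cluster-point condition by hand. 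That equivalence of formulations is indeed available in \cite{k2} (it is the topological-graph analogue of ``Condition (K) holds iff every quotient graph satisfies Condition (L)''), but you flag it as unverified, and it is the one step you would actually have to pin down in Katsura's paper; note also that for invariant $Y$ one has $\tau^{-1}(Y)=Y$, so $E_Y$ really is the topological graph of the classical system $(Y,\tau|_Y)$ and Lemma~\ref{L:topfree} applies to it. Your two dynamical implications are sound: in (A)$\Rightarrow$(B) the key facts --- that $\tau|_Y$ is an open map because $\tau^{-1}(Y)\subset Y$, and that a periodic point $s$ with $\tau^{j}(s)=t$ has the same finite orbit as $t$ --- are correctly used, though you should phrase the approximation with nets or neighbourhoods rather than sequences, since $T$ is not assumed second countable here; in (B)$\Rightarrow$(A) the verification that $\overline{\tau^{-\N}(t)}$ is invariant under $\tau^{-1}$ does require the local-homeomorphism property, as you indicate. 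In effect your argument reproves, for classical systems, Katsura's proposition relating freeness to topological freeness of restrictions; that is a perfectly valid and more self-contained route, but the direct identification of the hypothesis with $\Per(E)=\emptyset$ is shorter.
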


\begin{proof}
Katsura proved in \cite[Theorem 7.6]{k2} that every ideal of $C_{0} ( T ) \rtimes_{\alpha, L} \N$ is gauge-invariant if and only if the topological graph $E=(T,T,\tau,\id)$ is what he calls ``free,'' so we need to reconcile this notion of freeness with our condition. 

For each $t\in T$ the set $\Orb^+(t)$ in \cite[Definition~4.1]{k2} is $\tau^{-\N}(t)$. Condition (ii) of \cite[Definition~7.1]{k2} holds trivially for $E$ because the range map $\id$ is one-to-one, so $t\in T$ is periodic and isolated in $\tau^{-\N}(t)$ if and only if $t$ is an element of the set $\Per(E)$ in \cite[Definition~7.1]{k2}. So our condition says precisely that $\Per(E)$ is empty, which is freeness. 
\end{proof}

\begin{example}
A directed graph $E$ satisfies Condition (K) if for every $v \in E^{0}$ either there is no cycle based at $v$ or there are two distinct return paths based at $v$. We claim that a locally finite graph $E$ with no sources satisfies (K) if and only if every periodic point $\xi\in E^{\infty}$ is a cluster point of $\sigma^{-\N}(\xi)$. Then Theorem~\ref{T:GIper} implies that all the ideals of $C^*(E)$ are gauge invariant if and only if $E$ satisfies (K), as in \cite[Corollary~3.8]{bhrs}. 

Suppose that $E$ satisfies Condition (K) and $\xi \in E^{\infty}$ is periodic with period $n$. We show that for each $\mu \in E^{*}$ with $\xi \in Z ( \mu )$ we have $Z ( \mu ) \cap (\sigma^{-\N}(\xi)\setminus\{\xi\})\not=\emptyset$. We know there is a cycle in $E$ based at $s ( \xi_{n} )$.  Let $1 \le k \le n$ be the largest integer such that $r ( \xi_{k} ) = s ( \xi_{n} )$. Then $\xi_{k} \cdots \xi_{n}$ is a return path in $E$ based at $s ( \xi_{n} )$ and $E$ satisfies (K), so there is a distinct return path $\eta_{1} \cdots \eta_{m}$ based at $s ( \xi_{n} )$.  Choose $j \ge 1$ such that $jn \ge | \mu |$. Then $\lambda := \xi_{1} \cdots \xi_{jn} \xi_{1} \cdots \xi_{k-1} \eta_{1} \cdots \eta_{m} \xi \in Z ( \mu ) \cap (\sigma^{-\N}(\xi)\setminus\{\xi\})$.

Conversely, suppose every periodic point $\xi\in E^{\infty}$ is a cluster point of $\sigma^{-\N}(\xi)$, and that  $\mu$ is a cycle in $E$ based at $v$. Then $\xi := \mu \mu \mu \cdots$ is a periodic point in $E^{\infty}$, and there exists $\eta \in Z ( r ( \xi ) ) \cap (\sigma^{-\N}(\xi)\setminus\{\xi\})$. Let $m \ge 1$ be the smallest integer such that $\sigma^{m} ( \eta ) = \xi$. Then $\eta = \eta_{1} \cdots \eta_{m} \xi$ has $r ( \eta_{1} ) = v$. Let $1 \le k \le m$ be the largest integer such that $r ( \eta_{k} ) = v$. Since $\sigma^{k - 1} ( \eta ) \ne \xi$ by the choice of $m$, $\eta_{k} \cdots \eta_{m} \ne \mu$. Further, $r ( \eta_{k} \cdots \eta_{m} ) = v = s ( \eta_{k} \cdots \eta_{m} )$. Hence $\eta_{k} \cdots \eta_{m}$ is a return path in $E$ based at $v$, distinct from $\mu$. Thus $E$ satisfies Condition (K).  
\end{example}

\section{Primitive ideals in crossed products for classical systems}\label{secprim}

Suppose $(T,\tau)$ is a classical system. A closed invariant subset $Y$ of $T$ is a {\em maximal head} if for every pair $y_1,y_2\in Y$ and neighbourhoods $V_1$ of $y_1$ and $V_2$ of $y_2$, there exist points $x_1\in V_1,x_2\in V_2$ and $m,n\in\N$ with $\tau^m(x_1)=\tau^n(x_2)$. 

We claim that if $t\in T$ is periodic, then $\overline{\tau^{-\N}(t)}$ is a maximal head. Since $\tau^{-\N}(t)$ is nonempty and invariant, $\overline{\tau^{-\N}(t)}$ is a closed nonempty invariant subset of $T$. Given $y_1, y_2\in \overline{\tau^{-\N}(t)}$ and neighbourhoods $V_1$ of $y_1$ and $V_2$ of $y_2$ we know $\tau^{-\N}(t)\cap V_1\not=\emptyset\not= \tau^{-\N}(t)\cap V_2$. So there exist $x_1\in\tau^{-\N}(t)\cap V_1$ and $x_2\in\tau^{-\N}(t)\cap V_2$, and there are $m,n\in\N$ with $\tau^m(x_1)=t=\tau^n(x_2)$.

If $t\in T$ is periodic with period $n$, then we call $\beta:=\{\tau^k(t):0\le k\le n\}$ a {\em cycle}. The cycle $\beta$ is {\em discrete} if $t$ is isolated in $\overline{\tau^{-\N}(\beta)}:=\overline{\tau^{-\N}(t)}$. Each $\tau^k(t)$ is then isolated, and so each $\delta_{\tau^k(t)}\in C_c(\,\overline{\tau^{-\N}(\beta)}\,)$.

\begin{thm}\label{main thm for prim ideals section}
Suppose $(T,\tau)$ is a classical system and $T$ is second-countable. 

\smallskip
\textnormal{(a)} Suppose $Y$ is a maximal head in $T$. Then the ideal $I_Y$ defined in Theorem~\textup{\ref{T:gaugeideal}} is primitive if and only if there is no discrete cycle $\beta$ with $Y=\overline{\tau^{-\N}(\beta)}$.

\smallskip
\textnormal{(b)} Suppose $\beta$ is a discrete cycle with $|\beta|=n$ and denote $Y:=\overline{\tau^{-\N}(\beta)}$. Choose $t\in\beta$, $f\in C_c(T)$ with $f|_Y=\delta_t$, and $g_i\in C_c(T)$ with $g_{i} |_{Y} = \sqrt{|\tau^{-1} (\tau^{i+1} (t))|} \delta_{\tau^{i} ( t )}$ for $0\le i\le n-1$. Then for each $w\in\T$ the ideal $I_{\beta,w}$ generated by
\[
\{k_{M_L}(g_0)\dots k_{M_L}(g_{n-1})-wk_A(f)\}\cup I_Y
\]
does not depend on the choice of $t\in\beta$ or functions $f,g_i$, and is primitive.

\smallskip
\textnormal{(c)} Every primitive ideal $I$ of $C_0(T){\rtimes}_{\alpha,L}\N$ has the form $I_Y$ for $Y$ given by \textup{(\ref{form for I Y})} or $I_{\beta,w}$ for a unique choice of cycle $\beta$ and $w\in\T$.

\smallskip
\textnormal{(d)} The ideals $I_Y$ are gauge-invariant, and the ideals $I_{\beta,w}$ are not.
\end{thm}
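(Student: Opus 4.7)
The unifying strategy is to apply Katsura's classification of primitive ideals of topological graph algebras from \cite{k2} to the topological graph $E=(T,T,\tau,\id)$, using the isomorphism $\OO(E)\cong C_0(T)\rtimes_{\alpha,L}\N$ discussed at the start of \S\ref{The system (C_0(X),alpha,L)}. First I would translate the combinatorial data: our maximal heads of $(T,\tau)$ correspond exactly to Katsura's maximal heads of $E$ under the identifications $E^1=T$, $r=\id$, $s=\tau$; our discrete cycles $\beta$ correspond to his discrete periodic orbits; and second-countability of $T$ makes $\OO(E)$ separable so that Katsura's primitive-ideal theorem applies.

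For part (a), I would match up the gauge-invariant ideals. By Theorem~\ref{T:gaugeideal}, $I_Y$ is the unique gauge-invariant ideal whose hull is exactly $Y$, and by Katsura's theorem the gauge-invariant primitive ideals are precisely those associated to maximal heads that are \emph{not} the orbit-closure of a discrete periodic orbit; the remaining maximal heads $Y=\overline{\tau^{-\N}(\beta)}$ give rise to $Y_I=Y$ for a whole circle of non-gauge-invariant primitive ideals, with $I_Y$ itself being their intersection (hence not primitive). This is the content of (a).

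Part (b) is the technical heart of the argument. I would work in the quotient $B:=(C_0(T)\rtimes_{\alpha,L}\N)/I_Y$. Because $t\in\beta$ is isolated in $Y$, the characteristic function $\delta_t$ lies in $C_0(Y)$, so $k_A(f)$ descends to a non-zero projection $p\in B$ that depends only on $t$. The normalising factors $\sqrt{|\tau^{-1}(\tau^{i+1}(t))|}$ built into $g_i$ are precisely what is needed so that in the corner $pBp$ the element $u:=k_{M_L}(g_0)\cdots k_{M_L}(g_{n-1})$ is a \emph{unitary}: one checks that $u^*u=k_A(L^{n}(g_{n-1}^*\cdots g_0^*g_0\cdots g_{n-1}))$ reduces modulo $I_Y$ to $p$, using $L^n(\delta_{\tau^{n}(t)}|\tau^{-n}(\tau^n(t))|\cdot)=\delta_t$ at each isolated point of $\tau^{-\N}(\beta)$, and similarly $uu^*=p$ using $\tau^n(t)=t$. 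A standard argument identifies $pBp$ with $C(\T)$ via $u\mapsto z$, with $p\mapsto 1$, and the gauge action descending to rotation. The ideal $I_{\beta,w}$ then corresponds to the maximal ideal $\{h\in C(\T):h(w)=0\}$, which by the Rieffel correspondence determines a unique ideal of $B$ and hence of $C_0(T)\rtimes_{\alpha,L}\N$ containing $I_Y$; since this correspondence is independent of how $u$ and $p$ were presented, $I_{\beta,w}$ does not depend on the choices of $t\in\beta$ or representatives $f,g_i$. Primitivity follows because the quotient $C(\T)/\{h(w)=0\}\cong\C$ is simple and the corresponding quotient of $B$ is Morita equivalent to it.

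Part (c) is then a direct translation of Katsura's classification via (a) and (b), with uniqueness of $(\beta,w)$ following from the fact that distinct cycles yield distinct orbit closures, so distinct $Y$, and distinct characters of the same circle yield distinct ideals. Part (d) is the easiest: each $I_Y$ is gauge-invariant by Theorem~\ref{T:gaugeideal}, while $I_{\beta,w}$ is not, since $\gamma_z$ sends the generator $k_{M_L}(g_0)\cdots k_{M_L}(g_{n-1})-wk_A(f)$ to $z^n k_{M_L}(g_0)\cdots k_{M_L}(g_{n-1})-wk_A(f)$; if $I_{\beta,w}$ were gauge-invariant, subtracting would give $(z^n-1)k_{M_L}(g_0)\cdots k_{M_L}(g_{n-1})\in I_{\beta,w}$ for every $z\in\T$, forcing both $k_{M_L}(g_0)\cdots k_{M_L}(g_{n-1})$ and $k_A(f)$ into $I_{\beta,w}$, which collapses the corner $C(\T)$ to $0$ and contradicts primitivity established in (b). The main obstacle I expect is the bookkeeping in part (b): confirming that the normalising factors $\sqrt{|\tau^{-1}(\tau^{i+1}(t))|}$ conspire to make $u$ a genuine unitary in $pBp$ rather than just a partial isometry, since the whole $C(\T)$ structure (and hence the circle of primitive ideals) depends on this.
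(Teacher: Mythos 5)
Your overall strategy -- transport everything to Katsura's topological graph $E=(T,T,\tau,\id)$, match maximal heads and discrete cycles with his $\MM(E)$ and $\MM_{\Per}(E)$, and invoke his classification of primitive ideals (with second-countability for separability) -- is exactly what the paper does for parts (a), (c) and (d). The divergence is in part (b), where the paper proves independence of choices by explicit conjugation identities in the crossed product (e.g.\ computing $k_{M_L}(g_0)^*xk_{M_L}(g_0)$ and $k_{M_L}(g_0)\tilde{x}k_{M_L}(mdg_0)^*$ and checking restrictions to $Y$) and then identifies $I_{\beta,w}$ with Katsura's ideal $P_{Y,w}$, so that primitivity is quoted rather than re-proved. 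Your corner construction is attractive, and the unitarity of $u$ in $pBp$ and the isomorphism $pBp\cong C(\T)$ can indeed be made to work (the latter uses that forward paths in $(T,T,\tau,\id)$ are unique, so the compressed algebra is the closed span of the powers of $u$).

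The genuine gap is your use of the Rieffel correspondence: the projection $p$ is in general \emph{not} full in $B=(C_0(T)\rtimes_{\alpha,L}\N)/I_Y$. The ideal $\overline{BpB}$ is gauge-invariant and corresponds to the complement of the largest closed invariant subset of $Y$ missing $\beta$, and this subset is frequently nonempty (already for a graph with a cycle $\mu$ at $v$ with no entries, a loop $\nu$ at $w$, and an edge from $v$ to $w$, one has $Y=\overline{\sigma^{-\N}(\mu^\infty)}=E^\infty$ but $\{\nu^\infty\}$ is a proper closed invariant subset, so $p_{\mu^\infty}$ generates a proper ideal of $B$). Consequently $\Prim(pBp)$ parametrises only the primitive ideals of $B$ that do not contain $p$, not all of $\Prim(B)$; your claim that the quotient of $B$ by $I_{\beta,w}$ is Morita equivalent to $\C$ is false in these examples, so primitivity does not follow as you state it. The argument can be repaired (primitive ideals of the ideal $\overline{BpB}$ form an open subset of $\Prim(B)$, and the ones not containing $p$ are exactly those with hull equal to $Y$), but you would still need to check that the ideal \emph{generated} by $u-wp$ coincides with the primitive ideal so obtained, which is not automatic. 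Separately, your assertion that the choice of $t\in\beta$ does not matter ``since the correspondence is independent of how $u$ and $p$ were presented'' hides the real content: the Morita isomorphism between the corners at $t$ and $\tau(t)$ must carry $u$ to the cyclically permuted product with no phase twist, and verifying this is precisely the conjugation computation that occupies most of the paper's proof.
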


\begin{proof}
We first prove that $I_{\beta,w}$ does not depend on the choice of $f$. Write
\[
x:= k_{M_L}(g_0)\dots k_{M_L}(g_{n-1})-wk_A(f).
\]
Suppose $h\in C_c(T)$ satisfies $h|_Y=\delta_t$ and let $\tilde{x}=k_{M_L}(g_0)\dots k_{M_L}(g_{n-1})-wk_A(h)$. Then $h-f\in C_c(T\setminus Y)$, and $k_A(h-f)\in I_Y$. It follows that $\tilde{x}-x=wk_A(h-f)\in I_Y$, and so $\{x\}\cup I_Y$ and $\{\tilde{x}\}\cup I_Y$ generate the same ideal. 

To prove that $I_{\beta,w}$ does not depend on the choice of $g_i$ for any $0\le i\le n-1$ we do it for $g_0$. Recall from the proof of Theorem~\ref{T:gaugeideal} that $k_{M_L}(g)\in I_Y$ for all $g\in C_c(T\setminus Y)$.  Suppose $p_0\in C_c(T)$ satisfies $p_0|_Y=\delta_{t}$, and let $\tilde{x}=k_{M_L}(p_0)k_{M_L}(g_1)\dots k_{M_L}(g_{n-1})-wk_A(f)$. Then $p_0-g_0\in C_c(T\setminus Y)$, and $k_{M_L}(p_0-g_0)\in I_Y$. Thus $\tilde{x}-x\in I_Y$, and  $\{x\}\cup I_Y$ and $\{\tilde{x}\}\cup I_Y$ generate the same ideal.

To prove that $I_{\beta,w}$ does not depend on the choice of $t\in\beta$ it suffices to show that for $h\in C_c(T)$ with $h|_Y=\delta_{\tau(t)}$ and 
\[
\tilde{x}= k_{M_L}(g_1)\dots k_{M_L}(g_{n-1})k_{M_L}(g_0)-wk_A(h),
\]
the sets $\{x\}\cup I_Y$ and $\{\tilde{x}\}\cup I_Y$ generate the same ideal. We have 
\[
k_{M_L}(g_0)^*xk_{M_L}(g_0) = k_{M_L}({\langle g_0,g_0\rangle}_Lg_1)k_{M_L}(g_2)\dots k_{M_L}(g_{n-1})k_{M_L}(g_0)-wk_A({\langle g_0,fg_0\rangle}_L),
\]
and routine calculations show that
\[
{\langle g_0,g_0\rangle}_Lg_1|_Y=\sqrt{|\tau^{-1}(\tau^2(t))|}\delta_{\tau(t)}=g_1|_Y\quad\text{and}\quad{\langle g_0,fg_0\rangle}_L|_Y=\delta_{\tau(t)}=h|_Y.
\]
So $\{\tilde{x}\}\cup I_Y$ generates the same ideal as $\{k_{M_L}(g_0)^*xk_{M_L}(g_0)\}\cup I_Y$, which is contained in the ideal generated by $\{x\}\cup I_Y$.  

To get the reverse containment we assume without loss of generality that $\tau$ is injective on $\supp g_0$, and write $m:={(|\tau^{-1}(\tau(t))|)}^{-1}$, $d(s):=|\tau^{-1}(\tau(s))|$ for $s\in T$. We have
\begin{align}
k_{M_L}(g_0)&\tilde{x}k_{M_L}(mdg_0)^* =\nonumber\\
&k_{M_L}(g_0)\dots k_{M_L}(g_{n-1}){(k_{M_L},k_A)}^{(1)}(\Theta_{g_0,mdg_0})-w{(k_{M_L},k_A)}^{(1)}(\Theta_{g_0\alpha(h),mdg_0}).\label{eq with the 1 maps}
\end{align}
It follows from the injectivity of $\tau$ on $\supp g_0$ that
\[
\Theta_{g_0,mdg_0}=\phi(m{|g_0|}^2)\quad\text{and}\quad\Theta_{g_0\alpha(h),mdg_0}=\phi(m{|g_0|}^2\alpha(h)),
\]
and since $(k_{M_L},k_{A})$ is coisometric on $C_0(T)$, we have ${(k_{M_L},k_A)}^{(1)}(\Theta_{g_0,mdg_0})=k_A(m{|g_0|}^2)$ and ${(k_{M_L},k_A)}^{(1)}(\Theta_{g_0\alpha(h),mdg_0})=k_A(m{|g_0|}^2\alpha(h))$. The right-hand side of (\ref{eq with the 1 maps}) then becomes
\[
k_{M_L}(g_0)\dots k_{M_L}(g_{n-2})k_{M_L}(g_{n-1}\alpha(m{|g_0|}^2))-wk_A(m{|g_0|}^2\alpha(h)).
\]
Routine calculations show that
\[
\big(g_{n-1}\alpha(m{|g_0|}^2)\big)|_Y=\sqrt{|\tau^{-1}(t)|}\delta_{\tau^{n-1}(t)}=g_{n-1}|_Y\quad\text{and}\quad\big(m{|g_0|}^2\alpha(h)\big)|_Y=\delta_t=f|_Y,
\]
and so $\{x\}\cup I_Y$ generates the same ideal as $\{k_{M_L}(g_0)^*\tilde{x}k_{M_L}(mdg_0)\}\cup I_Y$, which is contained in the ideal generated by $\{\tilde{x}\}\cup I_Y$. Hence $\{x\}\cup I_Y$ and $\{\tilde{x}\}\cup I_Y$ generate the same ideal, and we have finished proving that $I_{\beta,w}$ does not depend on choices.
 
We now want to apply Theorem~11.14 and Corollary~12.3 of \cite{k2} to $E=(T,T,\tau,\id)$, so we again have to reconcile our definitions with Katsura's.

The sets in \cite[Definition~1.3]{k2} are $T_{\sce}=\emptyset$ and $T_{\fin}=T=T_{\rg}$, so $Y\subset T$ is invariant if and only if it is invariant in the sense of \cite[Definition~2.1]{k2}. We have already seen that $t\in T$ is periodic and isolated in $\tau^{-\N}(t)$ if and only if $t$ is an element of $\Per(E)$ given in \cite[Definition~7.1]{k2}. Thus $Y\subset T$ is a maximal head if and only if it is a maximal head as in \cite[Definition~4.12]{k2}. The definition of $\MM_{\Per}(E)$  in the middle of \cite[page~1839]{k2} shows that $\{\overline{\tau^{-\N}(\beta)}:\beta\text{ a discrete cycle}\}=\MM_{\Per}(E)$.

We claim that for $Y$ a maximal head as in (a) we have $I_Y=P_Y$, where $P_Y$ is given in \cite[Definition~11.4]{k2}. We have already seen in the proof of Theorem~\ref{T:gaugeideal} that $I_Y=I_{\rho}$, where $\rho$ is the admissible pair $(Y,\emptyset)$ and $I_{\rho}$ is given in \cite[Definition~3.3]{k2}. The ideal $P_Y$ is defined to be $I_{\rho}$ for such $Y$, so the claim follows. 

We now claim that for $w\in\T$, $\beta$ a discrete cycle and $Y:=\overline{\tau^{-\N}(\beta)}$ we have $I_{\beta,w}=P_{Y,w}$, where $P_{Y,w}$ is given in \cite[Definition~11.8]{k2}. Write $\beta=\{\tau^k(t):0\le k\le n-1\}$, choose $f\in C_c(T)$ such that $f|_Y=\delta_t$, and for $0\le i\le n-1$ choose functions $g_i\in C_c(T)$ with $g_i|_Y=\delta_{\tau^i(t)}$. The ideal $P_{Y,w}$ is generated by
\[
\{ k_{C_{\tau} (T)} ( g_{0} ) k_{C_{\tau} (T)} ( g_{1} ) \cdots k_{C_{\tau} (T)} ( g_{n-1} ) - w k_{A} ( f ) \} \cup I_{Y}.
\]
The isomorphism $U \colon C_{\tau}(T) \to M_{L}$ of \S\ref{The system (C_0(X),alpha,L)} satisfies $k_{C_{\tau} (T)} := k_{M_{L}} \circ U$, so $P_{Y,w}$ is generated by
\[
\{ k_{M_{L}} ( U(g_{0}) ) k_{M_{L}} ( U(g_{1}) ) \cdots k_{M_{L}} ( U(g_{n-1}) ) - w k_{A} ( f ) \} \cup I_{Y}.
\]
For $0 \le i \le n-1$ we have $U(g_{i}) |_{Y} = \sqrt{|\tau^{-1} (\tau^{i+1} (t))|} \delta_{\tau^{i} ( t )}$, so $P_{Y,w} = I_{\beta,w}$.

The set $\BV(E)$ given in \cite[page~1837]{k2} is empty, so the result now follows from \cite[Theorem~11.14]{k2} and \cite[Corollary~12.3]{k2} (which needs second-countability). 
\end{proof}

\subsection{The primitive ideals of graph algebras}\label{SS:PrimGA}

Let $E$ be a locally finite graph with no sources. As in \cite{hs}, a \emph{maximal head} is a non-empty subset $M$ of $E^{0}$ such that
\begin{enumerate}\item[]
\begin{enumerate}\item[(MH1)] if $v \in E^{0}$, $w \in M$, and $v \le w$ then $v \in M$;

\smallskip

\item[(MH2)] if $v \in M$, then there exists $e \in E^{1}$ with $r ( e ) = v$ and $s ( e ) \in M$; and

\smallskip

\item[(MH3)] for every $v, w \in M$ there exists $y \in M$ such that $v \le y$ and $w \le y$.
\end{enumerate}\end{enumerate}
We write $\MM ( E )$ for the set of maximal heads in $E$, and $\MM_{l} ( E )$ for the set of maximal heads $M$ containing a return path without an entry in $M$. Lemma~2.1 of \cite{hs} says that $M \in \MM_{l} ( E )$ if and only if there is a cycle in $M$ without an entry in $M$. 

The following result was proved for arbitrary directed graphs in \cite[Corollary 2.12]{hs}. 

\begin{thm}\label{main res for prim ideal of graph alg}
Suppose $E$ is a locally finite directed graph with no sources, and denote by $\{s,p\}$ the universal Cuntz-Krieger $E$-family in $C^*(E)$.   

\smallskip
\textnormal{(a)} Suppose $M\subset E^0$ is a maximal head. Then the ideal $I_{E^0\setminus M}$ in $C^*(E)$ generated by $\{p_v:v\in E^0\setminus M\}$ is primitive if and only if every cycle in $M$ has an entry.

\smallskip
\textnormal{(b)} Suppose $M\subset E^0$ is a maximal head and let $\mu_1\dots\mu_n$ be a cycle in $M$ without an entry in $M$. Then for each $w\in\T$ the ideal $I_{M,w}$ generated by
\[
\{s_{\mu_1}\dots s_{\mu_n}-wp_{r(\mu_1)}\}\cup I_{E^0\setminus M}
\]
does not depend on the choice of cycle $\mu_1\dots\mu_n$, and is primitive.

\smallskip
\textnormal{(c)} Every primitive ideal $I$ of $C^*(E)$ is $I_{E^0\setminus M}$ for $M=\{v\in E^0:p_v\in I\}$ or $I_{M,w}$ for a unique $w\in\T$ and a unique  maximal head $M$ containing a cycle without an entry.

\smallskip
\textnormal{(d)} The ideals $I_{E^0\setminus M}$ are gauge-invariant, and the ideals $I_{M,w}$ are not.
\end{thm}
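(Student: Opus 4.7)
The plan is to transport Theorem~\ref{main thm for prim ideals section} from the Exel crossed product $C_0(E^\infty)\rtimes_{\alpha,L}\N$ back to $C^*(E)$ via the isomorphism $\pi_{S,P}$ of Theorem~\ref{T:GAasECP}. Because $E$ is locally finite with (implicitly) countably many vertices, the cylinder sets $\{Z(\mu):\mu\in E^*\}$ form a countable basis for $E^\infty$, so Theorem~\ref{main thm for prim ideals section} applies to $(E^\infty,\sigma)$. What remains is to set up a dictionary between maximal heads of $E^0$ and maximal heads of $E^\infty$, between cycles in $M$ without an entry in $M$ and discrete cycles in $(E^\infty,\sigma)$, and between the ideals $I_{E^0\setminus M}$, $I_{M,w}$ and $I_Y$, $I_{\beta,w}$.

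For the first correspondence I would set $Y(M):=\{\xi\in E^\infty:r(\xi_i)\in M\text{ for every }i\}$, which is the set $Y_{E^0\setminus M}$ of Example~\ref{graph ex for gii}, and so $I_{Y(M)}=I_{E^0\setminus M}$. Axioms (MH1) and (MH2) yield the invariance and non-emptiness of $Y(M)$ at each vertex of $M$, and (MH3) supplies the coincidence property $\sigma^{m_1}(x_1)=\sigma^{m_2}(x_2)$ required for $Y(M)$ to be a maximal head, by extending two initial paths to a common descendant inside $M$. The inverse is $Y\mapsto\{v\in E^0:Z(v)\cap Y\ne\emptyset\}$, with (MH1)-(MH3) recovered from invariance of $Y$ and its maximal-head property.

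For the cycle correspondence, suppose $\mu=\mu_1\cdots\mu_n$ is a cycle in $M$ without entry in $M$ and set $t:=\mu\mu\cdots$; any $\eta\in Y(M)$ with $r(\eta_i)=r(\mu_j)$ must continue as $\eta_i\eta_{i+1}\cdots=\mu_j\mu_{j+1}\cdots$, since otherwise $\eta_i$ would be an entry to $\mu$ with $s(\eta_i)=r(\eta_{i+1})\in M$. Thus $Y(M)\cap Z(\mu)=\{t\}$, so $\beta:=\{\sigma^k(t):0\le k<n\}$ is a discrete cycle with $\overline{\sigma^{-\N}(\beta)}=Y(M)$; conversely, any entry to $\mu$ in $M$ provides a family of deviations from $t$ which remain in $Y(M)$ and accumulate at $t$, so isolation forces the no-entry condition. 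Choosing $f=\chi_{Z(\mu)}$ and $g_i=\sqrt{c(s(\mu_{i+1}))}\,\chi_{Z(\mu_{i+1})}$ satisfies the hypotheses of Theorem~\ref{main thm for prim ideals section}(b), and by Theorem~\ref{T:GAasECP} we have
\[
\prod_{i=0}^{n-1}k_{M_L}(g_i)=s_{\mu_1}\cdots s_{\mu_n}=s_\mu\quad\text{and}\quad k_A(f)=s_\mu s_\mu^*.
\]
Modulo $I_{E^0\setminus M}$, every non-$\mu_j$ edge into $r(\mu_j)$ has source in $E^0\setminus M$ and is therefore killed, collapsing the Cuntz-Krieger relation at $r(\mu_j)$ to $s_{\mu_j}s_{\mu_j}^*=p_{r(\mu_j)}$; a telescoping argument then gives $s_\mu s_\mu^*\equiv p_{r(\mu_1)}$. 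Hence $s_\mu-ws_\mu s_\mu^*\equiv s_\mu-wp_{r(\mu_1)}$, and the ideals $I_{\beta,w}$ and $I_{M,w}$ coincide.

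Parts (a), (b) and (c) now read off directly from the corresponding parts of Theorem~\ref{main thm for prim ideals section}; the uniqueness of $M$ in (c) is recovered from $M=\{v\in E^0:p_v\notin I\}$. Part (d) combines Theorem~\ref{T:gaugeideal}, which shows the $I_{E^0\setminus M}$ are gauge-invariant, with Theorem~\ref{main thm for prim ideals section}(d), which supplies the non-invariance of the $I_{M,w}$. I expect the main obstacle to be the cycle correspondence, since it requires carefully matching the combinatorial condition ``$\mu$ has no entry in $M$'' with the dynamical condition ``$t$ is isolated in $\overline{\sigma^{-\N}(\beta)}$ inside $Y(M)$''; both implications rely on the hereditary-saturated nature of $E^0\setminus M$ to prevent deviations from wandering out and returning to the cycle.
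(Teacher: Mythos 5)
Your proposal is correct and follows essentially the same route as the paper: transport Theorem~\ref{main thm for prim ideals section} through the isomorphism $\pi_{S,P}$ of Theorem~\ref{T:GAasECP}, match maximal heads $M\subset E^0$ with maximal heads $Y_{E^0\setminus M}\subset E^\infty$ and no-entry cycles with discrete cycles (the paper isolates this as Lemma~\ref{max heads match}), and identify the generators of $I_{M,w}$ with those of $I_{\beta,w}$. The only cosmetic differences are that the paper takes $f=\chi_{Z(r(\mu_1))}$ (so $k_A(f)=P_{r(\mu_1)}$ on the nose, avoiding your telescoping step $s_\mu s_\mu^*\equiv p_{r(\mu_1)}$ mod $I_{E^0\setminus M}$) and proves independence of the choice of cycle by a direct Cuntz--Krieger computation rather than by transporting the independence statement of Theorem~\ref{main thm for prim ideals section}(b); both variants are sound.
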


\begin{remark}\label{stuff about the sets}
We claim that $Y\mapsto H_Y:=\{r(\xi):\xi\in E^\infty\setminus Y\}$ is a bijection from the set of closed invariant subsets of $E^{\infty}$ onto the set of saturated and hereditary subsets of $E^0$, with inverse $H\mapsto Y_H:=\{\xi : r(\xi)\in E^0\setminus H\}$. 

Suppose $\xi\in Y_{H_{Y}}$. Since $H$ is hereditary, $r ( \xi_{i} ) \notin H_{Y}$ for all $i \ge 1$, and so $s ( \xi_{i} ) \notin H_{Y}$ for all $i\ge 1$. For each $i\ge 1$ there exists $\eta^{i} \in Y$ such that $r ( \eta^{i} ) = s ( \xi_{i} )$, and since $Y$ is invariant, $\xi_{1} \xi_{2} \cdots \xi_{i} \eta^{i} \in Y$. The sequence $( \xi_{1} \xi_{2} \cdots \xi_{i} \eta^{i} )_{i = 1}^{\infty}$ converges in $E^{\infty}$ to $\xi$. Since $Y$ is closed, $\xi \in Y$ and hence $Y_{H_Y}\subset Y$. Conversely, we have
\[
\lambda\in Y\Longrightarrow r(\lambda)\not\in H_Y\Longrightarrow \lambda\in Y_{H_Y}.
\]
So $Y\subset Y_{H_Y}$, and hence $Y_{H_Y}=Y$.

Fix a saturated hereditary subset $H$ of $E^{0}$ and suppose $v \notin H$. Let $\xi\in Y_H$ with $r(\xi)=v$. Then $v\not\in H_{Y_H}$, and so  $H_{Y_H}\subset H$. Conversely, let $v \in H$. If $\xi \in Y_{H}$, then $r ( \xi ) \neq v$, so $v \in H_{Y_{H}}$. Hence $H\subset H_{Y_H}$, and so $H_{Y_H}=H$.
\end{remark}

\begin{lemma}\label{max heads match}
Let $E$ be a locally finite directed graph with no sources. The map $M \mapsto Y_{E^{0} \setminus M}$ is a bijection from $\MM(E)$ onto the set of maximal heads in $E^{\infty}$, with inverse $Y \mapsto E^{0} \setminus H_{Y}$, and it maps $\MM_{l} ( E )$ onto $\{\overline{\sigma^{-\N}(\beta)}:\beta\textup{ a discrete cycle in }E^{\infty}\}$.
\end{lemma}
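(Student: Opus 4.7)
The plan is to use the bijection of Remark~\ref{stuff about the sets} between closed invariant $Y\subset E^\infty$ and saturated hereditary $H\subset E^0$, under which $M_Y:=E^0\setminus H_Y=\{r(\xi_1):\xi\in Y\}$ automatically satisfies (MH1) (hereditary-ness of $H_Y$) and (MH2) (saturation of $H_Y$, using that $E$ has no sources). The task thus reduces to matching (MH3) for $M$ with the neighbourhood-mixing clause defining a maximal head in $E^\infty$, and then matching ``cycle in $M$ without entry in $M$'' with ``discrete cycle in $Y$''. Throughout I read the maximal-head condition in the sense of~\cite{k2}, so that the mixing points $x_1,x_2$ may be chosen in $V_1\cap Y$ and $V_2\cap Y$.

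For (MH3) given a maximal head $Y$: pick $v,w\in M_Y$, choose $\xi,\eta\in Y$ ending at $v,w$, and apply the mixing property to the cylinders $Z(\xi_1),Z(\eta_1)$ to obtain $\xi',\eta'\in Y$ with $\sigma^m(\xi')=\sigma^n(\eta')=:\zeta\in Y$; then $y:=r(\zeta_1)\in M_Y$, and the finite prefixes $\xi'_1\cdots\xi'_m$, $\eta'_1\cdots\eta'_n$ witness $v\le y$ and $w\le y$. Conversely, for $M\in\MM(E)$ and paths $\xi,\eta\in Y:=Y_{E^0\setminus M}$ with neighbourhoods $V_1,V_2$, first refine to cylinders $Z(\mu),Z(\nu)$ with $\mu,\nu$ prefixes of $\xi,\eta$; apply (MH3) at $s(\mu_{|\mu|}),s(\nu_{|\nu|})\in M$ to find $y\in M$ and finite paths $\rho_1,\rho_2$ in $M$ realising $s(\mu_{|\mu|})\le y$ and $s(\nu_{|\nu|})\le y$; then use (MH2) iteratively to build an infinite path $\gamma\in Y$ ending at $y$, so that $\mu\rho_1\gamma\in V_1\cap Y$ and $\nu\rho_2\gamma\in V_2\cap Y$ have the common tail $\gamma$.

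For the $\MM_l$ part, if $M$ contains a cycle $\mu=\mu_1\cdots\mu_n$ based at $v$ without entry in $M$, set $\xi:=\mu^\infty$ and $\beta:=\Orb(\xi)$. The inclusion $\overline{\sigma^{-\N}(\beta)}\subset Y_{E^0\setminus M}$ is immediate from invariance and closedness; for the reverse, given $\eta\in Y$ and the cylinder $Z(\eta_1\cdots\eta_k)$, apply (MH3) to $s(\eta_k),v\in M$ to obtain $y\in M$ with a path $\rho'$ from $y$ down to $v$ in $M$ and a path $\rho$ from $y$ to $s(\eta_k)$, and use ``no entry in $M$'' edge by edge on $\rho'$ to conclude that $\rho'$ is an initial segment of the cycle, forcing $y=v_{j+1}$ for some $j$ and hence $\eta_1\cdots\eta_k\rho\,\sigma^j(\xi)\in Z(\eta_1\cdots\eta_k)\cap\sigma^{-\N}(\beta)$. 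Moreover $\xi$ is isolated in $Y$ by an easy induction: for $\eta\in Z(\mu^k)\cap Y$ each subsequent edge $\eta_{kn+j}$ has range a cycle vertex and source in $M$, so must equal $\mu_j$. Conversely, if $Y=\overline{\sigma^{-\N}(\beta)}$ with $\beta=\Orb(\xi)$ a discrete cycle and $\xi=\mu^\infty$, the cycle vertices lie in $M=M_Y$ since $\sigma^{i-1}(\xi)\in Y$, and a hypothetical entry $e\ne\mu_i$ in $M$ at $v_i$ produces, for each $\zeta\in Y$ with $r(\zeta_1)=s(e)\in M$, a sequence $\mu_1\cdots\mu_{(k-1)n+i-1}\,e\,\zeta$ of elements of $Y$ distinct from $\xi$ but agreeing with $\xi$ on arbitrarily long prefixes, contradicting that $\xi$ is isolated.

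The step I expect to be hardest is the reverse inclusion $Y\subset\overline{\sigma^{-\N}(\beta)}$ in the forward direction of the $\MM_l$ part: one must not merely apply (MH3) to find a common ancestor $y$, but also exploit ``no entry in $M$'' to pin $y$ down onto the cycle itself, via a careful descent along the path $\rho'$ from $y$ to $v$.
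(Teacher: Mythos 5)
Your proposal is correct and follows essentially the same route as the paper: identify (MH1)--(MH2) with the saturated/hereditary correspondence of Remark~\ref{stuff about the sets}, match (MH3) with the mixing condition by splicing two paths through a common vertex supplied by (MH3) (resp.\ reading the common vertex off $\sigma^m(\xi')=\sigma^n(\eta')$), and use the no-entry condition to pin approximating paths onto the cycle for the $\MM_l$ statement. The only cosmetic differences are that the paper verifies (MH1)--(MH2) directly from invariance of $Y$, and for the inclusion $Y\subset\overline{\sigma^{-\N}(\beta)}$ it invokes the already-established maximal-head property of $Y$ in $E^\infty$ (forcing $\lambda^1=\eta$ because $\mu$ has no entry in $M$) rather than re-running (MH3) with your descent along $\rho'$ --- the underlying mechanism is identical.
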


\begin{proof}
Let $M\in\MM(E)$. Since $M$ is hereditary and saturated, $Y_{E^0\setminus M}$ is closed and invariant. Suppose $\xi^1,\xi^2\in Y_{E^0\setminus M}$ and consider the neighbourhoods $Z(\xi_1^1\dots\xi_m^1)$ of $\xi^1$ and $Z(\xi_1^2\dots\xi_n^2)$ of $\xi^2$. It follows from (MH3) that there exists $v\in M$ and paths $\lambda,\mu$ with $s(\lambda)=v=s(\mu)$, $r(\lambda)=s(\xi_m^1)$ and $r(\mu)=s(\xi_n^2)$. Take $\eta\in E^{\infty}$ with $r(\eta)=v$, and let $\eta^1:=\xi_1^1\dots\xi_m^1\lambda\eta$ and $\eta^2:=\xi_1^2\dots\xi_n^2\mu\eta$. Then we have $\eta_1\in Z(\xi_1^1\dots\xi_m^1)$, $\eta^2\in Z(\xi_1^2\dots\xi_n^2)$ and $\sigma^{m+|\lambda|}(\eta^1)=\eta=\sigma^{n+|\mu|}(\eta^2)$. So $Y_{E^0\setminus M}$ is a maximal head in $E^{\infty}$. 

Let $Y$ be a maximal head in $E^{\infty}$. To see that $E^0\setminus H_Y=\{r(\xi):\xi\in Y\}$ satisfies (MH1), let $v\in E^0$ and $r(\xi)\in E^0\setminus H_Y$ with $v\le r(\xi)$. Then there exists a path $\mu$ with $s(\mu)=r(\xi)$ and $r(\mu)=v$. Since $\mu\xi\in\sigma^{-|\mu|}(\xi)$, it follows from the invariance of $Y$ that $\mu\xi\in Y$. Hence $v=r(\mu\xi)\in E^0\setminus H_Y$.

It follows from the invariance of $Y$ that for $r(\xi)\in E^0\setminus H_Y$ we have $\sigma(\xi)\in Y$, and so $r(\sigma(\xi))\in E^0\setminus H_Y$. Then $\xi_1\in E^1$ satisfies $r(\xi_1)=r(\xi)\in E^0\setminus H_Y$ and $s(\xi_1)=r(\sigma(\xi))\in E^0\setminus H_Y$, and so $E^0\setminus H_Y$ satisfies (MH2).

Since $Y$ is a maximal head, for each $r(\xi),r(\eta)\in E^0\setminus H_Y$ there exists $\xi',\eta'\in E^{\infty}$ with $r(\xi')=r(\xi)$ and $r(\eta')=r(\eta)$, and $m,n\in\N$ with $\sigma^m(\xi')=\sigma^n(\eta')$. Since $Y$ is invariant, we have $\sigma^m(\xi')\in Y$, so $r(\sigma^m(\xi'))\in E^0\setminus H_Y$ and satisfies $r(\xi),r(\eta)\le r(\sigma^m(\xi'))$. So (MH3) is satisfied. The first assertion in the result now follows from Remark~\ref{stuff about the sets}.

Now suppose $M\in\MM_l(E)$ and $\mu=\mu_1\dots\mu_n$ is a cycle in $M$ without an entry in $M$. We claim that for $\eta:=\mu\mu\dots\in E^{\infty}$ the set $\beta:=\{\sigma^k(\eta):0\le k\le n-1\}$ is a discrete cycle with $Y_{E^0\setminus M}=\overline{\sigma^{-\N}(\beta)}$. To see that $\eta$ is isolated in $\overline{\sigma^{-\N}(\beta)}$ suppose that $\xi\in\sigma^{-\N}(\beta) \cap Z ( r ( \eta ) )$. Then $\sigma^{m} ( \xi ) = \eta$ for some $m \in \N$, and $r ( \xi ) = r ( \eta )$. So $\xi = \xi_{1} \cdots \xi_{m} \eta$ where $r ( \xi_{1} ) = r ( \eta )$. Since $s ( \xi_{m} ) = r ( \eta ) \in M$, it follows from (MH1) that $r ( \xi_{i} ) \in M$ for each $1 \le i \le m$. If $\xi_{1} \ne \eta_{1} = \mu_{1}$, then $\xi_{1}$ is an entry for $\mu$ in $M$, so we must have $\xi_{1} = \mu_{1}$. Continuing in this manner for $2 \le i \le m$ gives $\xi = \eta$. So $\sigma^{-\N}(\beta) \cap Z ( r ( \eta ) ) = \{ \eta \}$, and hence $\eta$ is isolated in $\overline{\sigma^{-\N}(\beta)}$. 

Since $\eta\in Y_{E^0\setminus M}$ and $Y_{E^0\setminus M}$ is invariant, we have $\sigma^{-\N}(\beta)\subset Y_{E^0\setminus M}$. Since $Y_{E^0\setminus M}$ is closed, we have $\overline{\sigma^{-\N}(\beta)}\subset Y_{E^0\setminus M}$. For the reverse containment, let $\xi\in Y_{E^0\setminus M}$. If $\xi\in \sigma^{-\N}(\beta)$, then $\xi\in \overline{\sigma^{-\N}(\beta)}$, so we assume $\xi\not\in \sigma^{-\N}(\beta)$. It suffices to show that $\sigma^{-\N}(\beta)\cap Z(\xi_1\dots\xi_j)\not=\emptyset$ for all $j$. Consider the points and neighbourhoods $\eta\in Z(r(\eta))$ and $\xi\in Z(\xi_1\dots\xi_j)$. Since $Y_{E^0\setminus M}$ is a maximal head, there exists $\lambda^1\in Z(r(\eta))$, $\lambda^2\in Z(\xi_1\dots\xi_j)$ and $m,n\in\N$ with $\sigma^m(\lambda^1)=\sigma^n(\lambda^2)$. Since $Y_{E^0\setminus M}$ is invariant, we have $\lambda^1\in Y_{E^0\setminus M}$, and so $r(\lambda_i^1)\in M$ for all $i$. Since $r(\lambda^1)=r(\eta)$ and $\mu$ does not have an entry, we must have $\lambda^1=\eta$. So $\sigma^n(\lambda^2)=\sigma^m(\lambda^1)\in \sigma^{-\N}(\beta)$, which implies $\lambda^2\in \sigma^{-\N}(\beta)$. So $\lambda^2\in \sigma^{-\N}(\beta)\cap Z(\xi_1\dots\xi_j)$.               

To see that $M \mapsto Y_{E^{0} \setminus M}$ maps $\MM_l(E)$ onto  $\{\overline{\sigma^{-\N}(\beta)}:\beta\text{ a discrete cycle in }E^{\infty}\}$, we suppose $\beta=\{\sigma^k(\xi):0\le k\le n-1\}$ is a discrete cycle, and let $Y=\overline{\sigma^{-\N}(\beta)}$. The bijection sends $E^0\setminus H_Y$ to $Y$, so we need to show that $E^0\setminus H_Y\in\MM_l(E)$. We know that $\xi_1\dots\xi_n$ is a return path in $E$. Since $\xi\in Y$, $r(\xi_1)\in E^0\setminus H_Y$. Then (MH1) implies that $r(\xi_i)\in E^0\setminus H_Y$ for $1\le i\le n$. We suppose that $\xi_1\dots\xi_n$ has an entry in $E^{0} \setminus H_{Y}$, and look for a contradiction. There exist $e \in E^{1}$ and $1 \le j \le n$ such that $e \ne \xi_{j}$, $r ( e ) = r ( \xi_{j} )$, and $s ( e ) \in E^{0} \setminus H_{Y}$. Since $s ( e ) \notin H_{Y}$, there exists $\eta \in Y$ such that $r ( \eta ) = s ( e )$. Choose $m \ge k+1$ such that $\xi_{m} = \xi_{j}$, and consider the infinite path $\xi_{1} \cdots \xi_{m-1} e \eta$. Since $Y$ is invariant and $\eta \in Y$,  $\xi_{1} \cdots \xi_{m-1} e \eta \in Z ( \xi_{1} \cdots \xi_{k} ) \cap Y$. Moreover, since $e \ne \xi_{m}$, $\xi_{1} \cdots \xi_{m-1} e \eta \ne \xi$. Thus $\xi$ is not isolated in $Y$, which is a contradiction. Therefore, the return path $\xi_{1} \cdots \xi_{n}$ must have no entries in $E^{0} \setminus H_{Y}$, and hence $E^{0} \setminus H_{Y} \in \MM_{l} ( E )$.    
\end{proof}

\begin{proof}[Proof of Theorem~\ref{main res for prim ideal of graph alg}]
Let $w\in \T$, $M\in\MM_l(E)$ and $\mu_1\dots\mu_n\in M$ be a cycle without an entry in $M$, and note that all such cycles are cyclic permutations of each other. The Cuntz-Krieger relations imply that for $q_{i} := s_{\mu_{1}} s_{\mu_{2}} \cdots s_{\mu_{i-1}}$ and $r_{i} := s_{\mu_{i}} s_{\mu_{i+1}} \cdots s_{\mu_{n}}$ we have 
\[
s_{\mu_{i}} s_{\mu_{i+1}} \cdots s_{\mu_{n}} s_{\mu_{1}} s_{\mu_{2}} \cdots s_{\mu_{i-1}} - w p_{r ( \mu_{i} )}
= q_{i}^{*} ( s_{\mu_{1}} s_{\mu_{2}} \cdots s_{\mu_{n}} - w p_{r ( \mu_{1} )} ) q_{i}
\]
and
\[
s_{\mu_{1}} s_{\mu_{2}} \cdots s_{\mu_{n}} - w p_{r ( \mu_{1} )}
= r_{i}^{*} ( s_{\mu_{i}} s_{\mu_{i+1}} \cdots s_{\mu_{n}} s_{\mu_{1}} s_{\mu_{2}} \cdots s_{\mu_{i-1}} - w p_{r ( \mu_{i} )} ) r_{i}.
\]
Thus the ideal $I_{M,w}$ does not depend on the choice of the cycle $\mu$.

Recall from Theorem~\ref{T:GAasECP} that for $\{S_e,P_v\}$ given by (\ref{forms for ck family}) there exists an isomorphism $\pi_{S,P}:C^*(E)\to C_0(E^{\infty}){\rtimes}_{\alpha,L}\N$ satisfying $\pi_{S,P}(s_e)=S_e$ for each $e\in E^1$, and $\pi_{S,P}(p_v)=P_v$ for each $v\in E^0$. We can apply the arguments in Example ~\ref{graph ex for gii} to see that for each $M\in\MM(E)$ in which every cycle has an entry we have $\pi_{S,P}(I_{E^0\setminus M})=I_{Y_{E^0\setminus M}}$.

Now suppose $M\in \MM_l(E)$ and $\mu=\mu_1\dots\mu_n\in M$ is a cycle without an entry in $M$. We saw in the proof of Lemma~\ref{max heads match} that for $\eta:=\mu\mu\dots\in E^{\infty}$ the set $\beta=\{\sigma^k(\eta):0\le k\le n-1\}$ is a discrete cycle with $Y_{E^0\setminus M}=\overline{\sigma^{-\N}(\beta)}$. We claim that for $0 \le i \le n-1$ the function $g_i\in C_c(E^{\infty})$ given by $g_i(\xi)=\sqrt{|\tau^{-1}(\tau^{i+1}(\xi))|}\chi_{Z ( \mu_{i} )}$ has restriction $\sqrt{|\tau^{-1}(\tau^{i+1}(\xi))|}\delta_{\sigma^{i-1} ( \eta )}$ on $Y_{E^{0} \setminus M}$. Clearly $\chi_{Z ( \mu_{i} )} |_{Y_{E^{0} \setminus M}} ( \sigma^{i-1} ( \eta ) ) = 1$. Suppose that $\xi = \xi_{1} \xi_{2} \cdots \in Y_{E^{0} \setminus M} \cap Z ( \mu_{i} )$. Since $\xi \in Y_{E^{0} \setminus M}$, $r ( \xi_{j} ) \in M$ for $j \ge 1$. If we have $\xi \ne \sigma^{i-1} ( \eta ) = \eta_{i} \eta_{i+1} \cdots$, then noting that $\xi_{1} = \mu_{i} = \eta_{i}$, we can choose the smallest $m \ge 2$ such that $\xi_{m} \ne \eta_{i+m-1}$. Since $s ( \xi_{m} ) \in M$, $\xi_{m}$ is an entry for $\mu$ in $M$, which is a contradiction. So $\xi = \sigma^{i-1} ( \eta )$. Therefore $\chi_{Z ( \mu_{i} )} |_{Y_{E^{0} \setminus M}} = \delta_{\sigma^{i-1} ( \eta )}$, and the claim follows. A similar argument shows that $\chi_{Z ( r ( \mu_{1} ) )} |_{Y_{E^{0} \setminus M}}$ is the characteristic function $\delta_{\eta}$. 

The ideal $I_{\beta,w}$ is generated by the set 
\[
\left\{k_{M_L} ( g_0 )\dots k_{M_L} ( g_{n-1} ) - w k_{A} ( \chi_{Z ( r ( \mu_{1} ) )} )\right\}\cup I_{Y_{E^{0} \setminus M}}.
\]
Since $\sqrt{c(s(\mu_i))}=\sqrt{|\tau^{-1}(\tau^{i}(\xi))|}$, $I_{\beta,w}$ is also generated by
\[
\left\{\pi_{S,P}\left(s_{\mu_1}\dots s_{\mu_n}-wp_{r(\mu_1)}\right)\right\}\cup\pi_{S,P}(I_{E^0\setminus M}),
\]
which is $\pi_{S,P}(I_{E^0\setminus M,w})$.

The result now follows by applying Theorem~\ref{main thm for prim ideals section} to the system $(E^{\infty},\sigma)$.
\end{proof}

\section{Conclusions}\label{Conclusions}

In extending Exel's theory to non-unital algebras, we have had to make choices. We have already mentioned one such issue in Remark~\ref{normalising factor}: even for a classical system $(T,\tau)$ there are different choices of transfer operator. We have mainly used the normalised version which is defined on all of $C_0(T)$. However, when we used the isomorphism with the topological-graph algebra $\OO(E)$, we were effectively switching to the unnormalised version, which is only densely defined on $C_0(T)$. We chose not to try to develop a general theory for systems with densely-defined transfer operators, though we think the topic is potentially interesting, and this is one possible direction for further work. Here we discuss several other possible directions.

To get a bounded transfer operator, we had to restrict attention to locally finite graphs. To get a theory which applies to arbitrary graphs, we would need to use the boundary $\partial E$, which is formed by adding to $E^\infty$ the paths which start at a source or a vertex $v$ where $r^{-1}(v)$ is infinite. Then the shift is not everywhere defined, so we need to allow partially defined maps $\tau$, as is done for the compact case in \cite{er}. One could then directly define a topological graph (that is, with no normalising factor), so that Katsura's theory applies, and view his algebra as the crossed product. Such methods, though, could only be used for classical systems.

A second possibility which appeals to us is guided by what might work for actions of semigroups. From this point of view, it seems best to drop the normalising factor: the square $L^2$ of the normalised transfer operator $L$ for a classical system $(T,\tau)$ need not be the normalised transfer operator for $\alpha^2$ (as examples from graphs show). So we come back to densely-defined transfer operators. However, rather than work out some axioms, we think it might be best to concentrate on the modules $M_L$, which can be built by completing a dense subspace such as $C_c(T)$, work out conditions under which these modules form a product system over the semigroup in the sense of Fowler \cite{f}, and define the Exel crossed product to be the Cuntz-Pimsner algebra of the product system. A start on such a theory has been made by Larsen \cite{l}, though she deals only with bounded transfer operators. One problem with such an approach is that there is not yet a generally accepted notion of Cuntz-Pimsner algebra for product systems (see the discussion at the start of \cite{sy}). Nevertheless, examples and intuition from Exel systems might be a fertile source of interesting product systems, and a useful contribution to the general theory.


\begin{thebibliography}{20}

\bibitem{a} S. Adji,  {\em Invariant ideals of crossed products by semigroups of endomorphisms}, Functional Analysis and Global Analysis (T. Sunada and P. W. Sy, Eds.), Springer-Verlag, Singapore, 1997, pages~1--8.



\bibitem{alnr} S. Adji, M. Laca, M. Nilsen and I. Raeburn, {\em Crossed products by semigroups of endomorphisms and the Toeplitz algebras of ordered groups}, Proc. Amer. Math. Soc. {\bf 122} (1994), 1133--1141.

\bibitem{bhrs} T. Bates, J. H. Hong, I. Raeburn and W. Szyma\'{n}ski, {\em The ideal structure of the $C^{*}$-algebras of infinite graphs}, Illinois J. Math. {\bf 46} (2002), 1159--1176.

\bibitem{bprs} T. Bates, D. Pask, I. Raeburn and W. Szyma\'{n}ski, {\em The $C^*$-algebras of row-finite graphs}, New York J. Math. {\bf 6} (2000), 307--324.


\bibitem{br} N. Brownlowe and I. Raeburn, {\emÊ Exel's crossed product and relative Cuntz-Pimsner algebras}, Math. Proc. Camb. Phil. Soc. {\bf 141} (2006), 497--508.

\bibitem{c} J. Cuntz, {\em The internal structure of simple $C^*$-algebras}, Proc. Sympos. Pure Math., vol. 38, Amer. Math. Soc., Providence, 1982, pages 85--115.

\bibitem{e1} R. Exel, {\emÊ A new look at the crossed-product of a $C^*$-algebra by an endomorphism}, Ergodic Theory Dynam. Systems {\bf 23} (2003), 1--18.
Ê 
\bibitem{e2} R. Exel, {\emÊ Crossed-products by finite index endomorphisms and KMS states}, J. Funct. Anal. {\bf 199} (2003), 153--188.

\bibitem{er} R. Exel and D. Royer, {\em The crossed product by a partial endomorphism},  Bull. Braz. Math. Soc. {\bf 38} (2007), 219--261.

\bibitem{ev} R. Exel and A. Vershik, {\emÊ $C^*$-algebras of irreversible dynamical systems}, Canad. J. Math. {\bf 58} (2006), 39--63.

\bibitem{f} N. J. Fowler, {\em Discrete product systems of Hilbert bimodules}, Pacific J. Math. {\bf 204} (2002), 335--375.


\bibitem{fmr} N. J. Fowler, P. S. Muhly and I. Raeburn, {\em Representations of Cuntz-Pimsner algebras}, Indiana Univ. Math. J. {\bf 52} (2003), 569--605.

\bibitem{fr} N. J. Fowler and I. Raeburn, {\em The Toeplitz algebra of a Hilbert bimodule}, Indiana Univ. Math. J. {\bf 48} (1999), 155--181.

\bibitem{hs} J. H. Hong and W. Szyma\'{n}ski, {\em The primitive ideal space of the $C^{*}$-algebras of infinite graphs}, J. Math. Soc. Japan {\bf 56} (2004), 45--64.


\bibitem{im} M. Ionescu and P. S. Muhly, \emph{Groupoid methods in wavelet analysis}, Group Representations, Ergodic Theory, and Mathematical Physics: A Tribute to George W. Mackey, Contemp. Math., vol. 449, Amer. Math. Soc., Providence, 2008, pages  193--208.

\bibitem{kat} T. Katsura, {\em On $C\sp *$-algebras associated with $C\sp *$-correspondences},  J. Funct. Anal.  {\bf 217}  (2004), 366--401.

\bibitem{k} T. Katsura, {\em A class of $C^*$-algebras generalizing both graph algebras and homeomorphism $C^*$-algebras \textup{I}, Fundamental results}, Trans. Amer. Math. Soc. {\bf 356} (2004), 4287--4322.

\bibitem{k2} T. Katsura, {\em A class of $C^{*}$-algebras generalizing both graph algebras and homeomorphism $C^{*}$-algebras \textup{III}, ideal structures}, Ergodic Theory Dynam. Systems {\bf 26} (2006), 1805--1854.

\bibitem{lr2} M. Laca and I. Raeburn, {\em Semigroup crossed products and the Toeplitz algebras of nonabelian groups}, J. Funct. Anal. {\bf 139} (1996), 415--440.

\bibitem{lr} M. Laca and I. Raeburn, {\em A semigroup crossed product arising
in number theory}, J. London Math. Soc. {\bf 59} (1999), 330--344.

\bibitem{l} N. S. Larsen, {\em Exel crossed products over abelian semigroups}, Ergodic Theory Dynam. Systems, to appear.

\bibitem{ms} P. S. Muhly and B. Solel, {\em Tensor algebras over $C^*$-correspondences (representations, dilations, and $C^*$-envelopes)\/}, J. Funct. Anal. {\bf 158} (1998), 389--457.


\bibitem{munk} J. R. Munkres, Topology, Second Edition, Prentice Hall, New Jersey, 2000.

\bibitem{mold} G. J. Murphy, {\em Ordered groups and Toeplitz algebras}, J. Operator Theory {\bf 18} (1987), 303--326.

\bibitem{m} G. J. Murphy, {\em Crossed products of $C^*$-algebras by endomorphisms}, Integral Equations Operator Theory {\bf 24} (1996), 298--319.

\bibitem{n} A. Nica, {\em $C^*$-algebras generated by isometries and
Wiener-Hopf operators}, J. Operator Theory {\bf 27} (1992), 17--52.

\bibitem{pas} W. Paschke, {\em The crossed product by an endomorphism},
  Proc. Amer. Math. Soc. {\bf 80} (1980), 113-118.

\bibitem{p} M. V. Pimsner, {\em A class of $C^*$-algebras generalizing both Cuntz-Krieger algebras and crossed products by $\Z$}, Fields Institute Commun. {\bf 12} (1997), 189--212.


\bibitem{r} I. Raeburn, Graph Algebras, CBMS Regional Conference Series in Math., vol. 103, Amer. Math. Soc., Providence, 2005.

\bibitem{rw} I. Raeburn and D. P. Williams, Morita Equivalence and Continuous-Trace $C^*$-Algebras, Math. Surveys and Monographs, vol. 60, Amer. Math. Soc., Providence, 1998.

\bibitem{sy} A. Sims and T. Yeend, {\em $C^*$-algebras associated to product systems of Hilbert bimodules}, J. Operator Theory, to appear; arXiv.math.OA/07123073.

\bibitem{s} P. J. Stacey, {\em Crossed products of $C^*$-algebras by $*$-endomorphisms}, J. Austral. Math. Soc. Ser. A {\bf 54} (1993), 204--212.

\end{thebibliography}
\end{document}